    \numberwithin{equation}{section}
    \numberwithin{figure}{section}
    \def\cW{\mathcal{W}}
    \def\cS{\mathcal{S}}
    \def\cT{\mathcal{T}}
    \def\cC{\mathcal{C}}
    \def\cR{\mathcal{R}}
    \def\R{\mathbb{R}}
    \def\Z{\mathbb{Z}}
    \def\N{\mathbb{N}}
    \renewcommand\leq{\leqslant}
    \renewcommand\geq{\geqslant}
    \newcommand{\Tile}{\operatorname{Tile}}
    \theoremstyle{plain}
    \newtheorem{thm}{Theorem}[section]
    \newtheorem{theorem}[thm]{Theorem}
    \newtheorem{lemma}[thm]{Lemma}
    \newtheorem{corollary}[thm]{Corollary}
    \newtheorem{proposition}[thm]{Proposition}
    \newtheorem{question}[thm]{Question}
    \newtheorem*{claim*}{Claim}
    \theoremstyle{definition}
    \newtheorem{definition}[thm]{Definition}
    \newtheorem*{definition*}{Definition}
    \newtheorem*{remarks*}{Remarks}
    \newtheorem*{remark*}{Remark}
    \newtheorem{remark}[thm]{Remark}
    \newtheorem{example}[thm]{Example}
\newenvironment{enumerate-math}
{\begin{enumerate}
		\addtolength{\itemsep}{5pt}
		}
	{\end{enumerate}}
\begin{document}

	\title{Undecidability of translational monotilings}

	\author{Rachel Greenfeld}
	\address{School of Mathematics, Institute for Advanced Study, Princeton, NJ 08540.}
	\email{greenfeld.math@gmail.com}
	\author{Terence Tao}
	\address{UCLA Department of Mathematics, Los Angeles, CA 90095-1555.}
	\email{tao@math.ucla.edu}

	\subjclass[52C23, 03B25]{52C23, 03B25}
	\date{}
	
	\keywords{Translational tiling. Decidability. Domino problem. Aperiodic tiling.}
	
	\begin{abstract} 
      In the 60's,  Berger famously showed that translational tilings of $\Z^2$ with multiple tiles  are algorithmically undecidable. Recently, Bhattacharya proved the decidability of \emph{translational monotilings}  (tilings by translations of a single tile) in $\Z^2$. 
      The decidability of translational monotilings in higher dimensions remained unsolved. 
      In this paper, by combining  our recently developed techniques with ideas  introduced by Aanderaa--Lewis, we finally settle this problem, achieving the undecidability of translational monotilings of (periodic subsets of)  virtually $\Z^2$ spaces, namely, spaces of the form $\Z^2\times G_0$, where $G_0$ is a finite Abelian group. This also implies the undecidability of translational monotilings in $\Z^d$, $d\geq 3$.
	\end{abstract}
	
	\maketitle

\tableofcontents
	
\section{Introduction}

The study of the decidability of tilings goes back to H. Wang \cite{wang}, who introduced  the \emph{Wang tiling problem} (also known as the \emph{Wang domino problem}). 
Given a finite set of \emph{Wang tiles} $\cW$, a set of unit squares whose sides are colored, the Wang tiling problem is  the problem of determining whether it is possible to cover the plane by translated copies of the given Wang tiles without overlaps (up to null sets), under the ``domino'' constraint: two adjacent tiles in the tiling must have their colors agree on the overlapping sides. Any solution to the problem is  a \emph{Wang tiling} by the given tile-set  (see Definition  \ref{def:wang} in Section \ref{sec:2} for a formal and precise description on the Wang tiling problem).  Wang,  in his endeavours to solve the decidability of first order logic $\forall\exists\forall$ formulae, introduced a geometric interpretation of these formulae as Wang tiling problems; this motivates the question  of determining the (algorithmic) decidability of the Wang tiling problem: whether there exists an algorithm that, when given an arbitrary Wang tile-set, computes in finite time if the corresponding Wang tiling problem is solvable.

As an important milestone towards the resolution of this question, the \emph{fixed Wang tiling problem} of determining whether an arbitrary Wang tile-set $\cW$ admits a Wang tiling that contains a specified tile $w\in \cW$, was proven to be undecidable \cite{wang, buchi, kmw}.
Building on this result, a well-known work of Berger \cite{Ber, Ber-thesis} proves the undecidability of Wang tilings. In his proof, Berger shows that the Wang tiling problem is \emph{Turing complete}: he encodes any Turing machine as a Wang tiling problem, such that the Turing machine halts if and only if the Wang tiling problem is not solvable.  Then, the undecidability of the Wang tiling problem follows from the undecidability of the \emph{halting problem} \cite{turing}. See \cite{jv} and \cite[Section 1.1]{jr} for comprehensive surveys on the study of the decidability of the Wang tiling problem.

It was observed in \cite{golomb} that any Wang tiling can be encoded as a special type of \emph{translational tiling} of $\Z^2$, with the same number of tiles (see \cite[Section 1.4]{GT21} for a definition of translational tilings with multiple tiles and further discussion).  Thus, the  results of Berger on the undecidability of the Wang tiling problem  imply the undecidability of translational tilings with multiple tiles in $\Z^2$. However, 
as the Wang tiling problem with a fixed number of Wang tiles is trivially decidable\footnote{Indeed, for any fixed number $J\in \N$, there are only finitely many sets of $J$ Wang squares (up to relabeling).}, Berger's result does not imply the undecidability of translational tiling with a fixed number of tiles. 
In \cite{ollinger11}, building on Berger's argument,  Ollinger  obtained the undecidability of translational tilings with  $11$ tiles in $\Z^2$. 
Recently, in  \cite{GT21}, we proved the undecidability of translational tilings with only $2$ tiles, at the cost of enlarging the group from being two-dimensional to being \emph{virtually} two-dimensional, namely, of the form $\Z^2\times G_0$ for finite Abelian group $G_0$.
Our proof in \cite{GT21} consists of encoding any Wang tiling problem, with arbitrary number of tiles, as a translational tiling problem with merely two tiles in $\Z^2\times G_0$ (for an appropriate finite Abelian group $G_0$, that depends of the given Wang tiling problem), such that the Wang tiling problem is solvable if and only if the two tiles admit a translational tiling of $\Z^2\times G_0$.\footnote{In \cite{GT21} we actually obtained the undecidability of translational tilings of  $\Z^2\times E$, for $E\subset G_0$, with two tiles; however, using our more advanced encoding in \cite{GT22}, this result can be lifted to give the undecidability of translational tilings of the whole group $\Z^2\times G_0$ with two tiles.}  Then, the undecidability of the Wang tiling problem implies the existence of an undecidable translational tiling problem with two tiles in virtually two dimensional Abelian groups. The decidability of \emph{translational monotilings} - translational tilings with a single tile - remained unsolved. In this paper we finally settle this problem.

\subsection{Translational monotilings}\label{monotile-sec}
Let $G = (G,+)$ be a finitely generated Abelian group, $F$ be a finite subset of $G$ and $E$ be a \emph{strongly periodic} subset of $G$ (i.e., $E$ is invariant under translations by a subgroup of $G$ of finite index). We say that $F$ tiles $E$ by translations along $A\subset G$ and write
$$A\oplus F=E,$$
if the translations of $F$ along $A$:
$$a+f,\quad a\in A,\; f\in F,$$
cover each point of $E$ exactly once.  In this case we refer to  $A$ as a \emph{translational monotiling} of $E$ by the \emph{translational  monotile} $F$. 

\begin{example}\label{simple-tile}  Let $G\coloneqq \Z^2$, $F \coloneqq \{0\}\times \{0,1\}$, and $E \coloneqq \Z^2$.  Then  $A$ is a translational monotiling of $E$ by $F$ if and only if it is of the form
\begin{equation}\label{A-def}
A \coloneqq \{ (n, a(n) + m): n\in \Z ,m \in 2\Z \}
\end{equation}
with  $a \colon \Z \to \{0,1\}$. 
\end{example}

\subsection{Undecidability and aperiodicity}

The decidability of monotilings is closely related to the \emph{periodic tiling conjecture} \cite{stein, grunbaum-shephard, LW} which asserts that translational monotilings cannot be aperiodic.  A monotiling $A$ is said to be \emph{aperiodic}\footnote{In the aperiodic order literature, this notion is referred to as \emph{weak aperiodicity}.} if it cannot be replaced with (or ``repaired to'') a monotiling $A'$ which is strongly periodic (i.e., invariant with respect to a finite index subgroup).

\begin{example}  We continue the discussion of Example \ref{simple-tile}. The monotilings \eqref{A-def} are all invariant under translations by the subgroup $\{0\}\times 2\Z$ of $G$; however, this is not a finite index subgroup of $G$, and most of these monotilings\footnote{These tilings are sometimes called \emph{weakly periodic}, \emph{$1$-periodic} or \emph{half-periodic tilings} in the literature.} are not strongly periodic.  However, any such monotiling $A$ can be ``repaired'' by replacing it with the strongly periodic monotiling $\Z \times 2\Z$ (one can think of this operation as an application of appropriate vertical shifts to $A$).  Hence these tilings are not aperiodic. 
\end{example}

H. Wang \cite{wang} famously showed that if the periodic tiling conjecture holds in a class of finitely generated Abelian groups $G$ then there is an algorithm that, when given any finite subset $F$ of such a group $G$ and a strongly periodic $E\subset G$, computes in finite time whether  $F$ tiles $E$ by translations; see \cite[Appendix A]{GT21}. Thus, any undecidability result immediately implies the existence of an aperiodic tiling. In the converse direction, while aperiodicity does not imply undecidability, an undecidability result is usually foreshadowed by a construction of an aperiodic tiling. 

The periodic tiling conjecture was verified for $\Z^2$ in \cite{BH} (see also \cite{GT20} for an alternative proof); from the arguments of Wang, this implies that the translational monotiling problem in $\Z^2$ is decidable.  However, the situation changes in larger groups, such as $\Z^d$ for $d>2$, or $\Z^2 \times G_0$ for a finite Abelian group $G_0$; in our recent work \cite{GT22} we constructed aperiodic translational monotilings, showing that the periodic tiling conjecture could fail in both of these cases.
Our construction in \cite{GT22} consisted of the following two major steps:
\begin{itemize}
    \item For a sufficiently large power $q=2^s$ of $2$, we introduced the notion of a \emph{$q$-adic Sudoku puzzle} \cite[Definition 7.4]{GT22}, and showed that this puzzle did not admit any (strongly) periodic solutions; hence, this is an aperiodic puzzle \cite[Theorem 7.7]{GT22}.
    \item We developed a \emph{tiling language} and showed that the $q$-adic Sudoku puzzle is \emph{weakly expressible} (in the sense of \cite[Definition 4.13]{GT22}) in this language \cite[Theorem 4.14]{GT22}.  Thus, the $q$-adic Sudoku puzzle could be encoded as a monotiling problem.
\end{itemize}
The $q$-adic nature of the Sudoku puzzle forces a particular hierarchical structure: any solution to the puzzle has infinitely many $q$-adic scales, and its ``post-Tetris move''  (obtained by erasing its lowest scale cells) is again a solution to the puzzle \cite[Proposition 9.7]{GT22}.

Hierarchical structure is well-known and extensively used in the aperiodic order literature and in study of aperiodic tilings; for instance, any substitution tiling is a hierarchical tiling (see, e.g., \cite{ grunbaum-shephard, SM, GS1} and the references therein).  In fact, it is this hierarchical structure that plays a major role in many of the proofs of the undecidability of the Wang tiling problem and tilings with multiple tiles \cite{Ber, Ber-thesis, Rob, al, Lew, SM, Oll, drs}. In particular, a use of $p_1\times p_2$-adic structure (with sufficiently large distinct primes $p_1,p_2$), which is similar in nature to the structure of the $q$-adic-Sudoku puzzle solutions  in \cite{GT22}, was employed by Aanderaa and Lewis \cite{al, Lew}  to establish the undecidability of an empty distance subshift problem, which in turn gave an alternative proof of the undecidability  of the Wang tiling problem; see \cite[Section 4]{jv} for more details and further discussion. A key feature of the Aanderaa--Lewis construction was a decoration of each $p_1\times p_2$-scale of the subshift, which simulates the Wang tile covering the point in $\Z^2$ that is represented by this $p_1\times p_2$-scale.

\subsection{Results}

Inspired by \cite{al, Lew}, in this paper we introduce the notion of \emph{decorated $p_1\times p_2$-Sudoku puzzles} and show that any given Wang tiling problem\footnote{In fact, we can encode a broader class of domino problems, as described in Section \ref{sec:2}.} can be encoded as a Sudoku puzzle of this type, in the sense that the given Wang tiling problem is solvable if and only if the corresponding decorated $p_1\times p_2$-Sudoku puzzle is solvable. Then, using the tiling language we developed in \cite{GT21, GT22}, we encode any such Sudoku puzzle as a translational monotiling problem in a virtually $\Z^2$ space (i.e., a space of the form $\Z^2\times G_0$, where $G_0$ is a finite Abelian group). This gives the following undecidability result for translational monotilings:

\begin{theorem}[Undecidability of translational monotilings  in $\Z^2\times G_0$]\label{thm:main}
    There is no algorithm which upon any input of 
    \begin{itemize}
        \item a finite Abelian group $G_0$;
        \item a set $E\subset G_0$;
        \item a finite set $F\subset \Z^2\times G_0$,
    \end{itemize}
    computes (in finite time) whether $F$ tiles $\Z^2\times E$.
\end{theorem}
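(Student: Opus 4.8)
The plan is to prove \thmref{thm:main} by a computable reduction from an undecidable domino problem. I will show that any (fixed-tile) Wang tiling problem can be effectively transformed into a translational monotiling problem of some $\Z^2 \times E$, in such a way that the Wang problem is solvable if and only if the monotile $F$ tiles $\Z^2 \times E$. Since the fixed Wang tiling problem is known to be undecidable \cite{wang, buchi, kmw, Ber}, the existence of such a reduction immediately yields the claimed undecidability. The reduction factors through an intermediate combinatorial object, the \emph{decorated $p_1\times p_2$-Sudoku puzzle}, and the final passage to a single tile is then handled by the tiling language of \cite{GT21, GT22}.

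First I would fix two large distinct primes $p_1, p_2$ and set up the decorated Sudoku puzzle along the hierarchical lines of \cite{al, Lew}. Any solution to such a puzzle should be forced to carry an infinite tower of $p_1 \times p_2$-adic scales: each $p_1 \times p_2$ block at a given scale is recorded by a single cell at the next scale, and the value of that cell encodes a \emph{decoration}, namely the Wang tile assigned to the point of $\Z^2$ that the block represents. The Sudoku constraints must accomplish two tasks simultaneously. They must pin down the rigid $p_1 \times p_2$-adic hierarchy, so that the puzzle cannot trivialize or collapse to a periodic pattern. And they must transport the Wang \emph{domino} constraints into the puzzle: whenever two decorated cells are horizontally or vertically adjacent, the colors of the corresponding Wang tiles must agree on the shared edge, and the decorations must be consistent between consecutive scales.

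Next I would verify that this encoding is a faithful reduction. In the forward direction, given a Wang tiling of $\Z^2$ containing the distinguished tile, one builds a Sudoku solution by reading off, at each scale and each position, the Wang tile covering the corresponding lattice point; the adjacency and consistency constraints then hold by construction. In the reverse direction, from any solution of the decorated puzzle one extracts the decoration at the finest scale and argues, using the domino constraints built into the puzzle, that it defines a legal Wang tiling. I would then apply the weak-expressibility machinery of \cite{GT21, GT22}: the decorated $p_1 \times p_2$-Sudoku puzzle, like the $q$-adic Sudoku puzzle treated there, is weakly expressible in the tiling language, and hence can be encoded as a single translational tile $F$ in an appropriate virtually $\Z^2$ group $\Z^2 \times G_0$, with $F$ tiling $\Z^2 \times E$ if and only if the puzzle is solvable. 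Composing the three steps gives a computable, solvability-preserving map from Wang tiling problems to monotiling problems, which proves the theorem; the consequence for $\Z^d$ with $d \ge 3$ then follows since $\Z^2 \times G_0$ sits as a periodic subset of a suitable $\Z^d$.

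The main obstacle I expect lies in the design of the decorated Sudoku constraints, that is, in the first two steps. One must arrange the constraints so that they simultaneously fix the hierarchical $p_1\times p_2$-adic structure \emph{and} correctly simulate the Wang adjacency rules through the decoration, all while staying within the fragment that is weakly expressible by a single tile. The most delicate point is the decoration-consistency mechanism across consecutive scales: one has to guarantee that the Wang tile recorded at a coarse scale is compatible with the finer data beneath it, and that edge-color matching is transmitted faithfully across scale boundaries, without imposing any condition that cannot be realized by the translates of one tile.
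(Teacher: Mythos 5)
Your proposal follows essentially the same route as the paper: reduce an undecidable Wang/domino problem to a \emph{decorated $p_1\times p_2$-adic Sudoku puzzle} in the spirit of Aanderaa--Lewis, where the $p_1\times p_2$-adic valuations of a cell's position encode a point of $\N^2$ and the decoration records the Wang tile there, and then encode the Sudoku puzzle as a monotiling of $\Z^2\times E$ via the weak-expressibility tiling language of \cite{GT21, GT22} (the paper reduces from the general domino problem rather than the fixed-tile Wang problem, but this is immaterial). The difficulties you flag --- rigidifying the hierarchical structure and transmitting the adjacency constraints through the decoration while remaining weakly expressible --- are exactly the ones the paper resolves in its Sections \ref{sec:4}--\ref{sec:6}.
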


This solves \cite[Question 10.5]{GT22} in the affirmative.

\begin{remark}
    In fact, we obtain the slightly stronger result that translational monotilings in virtually $\Z^2$ spaces are \emph{Turing complete} (with \emph{Turing degree} $0'$). Indeed, in \cite{Ber, Ber-thesis} it was shown that any Turing machine can be simulated as a Wang tiling problem, giving that the Wang tiling problem is Turing complete. In this paper we eventually ``program'' the Wang tiling problem as a translational monotiling problem; thus, we obtain the Turing completeness of the latter translational monotiling problem from Turing completeness of  the Wang tiling problem.
\end{remark}

By ``pulling back'' the undecidable problem of Theorem \ref{thm:main} (using \cite[Theorem 1.1]{bgu} or the argument in \cite[Section 9]{GT21}) we obtain:

\begin{corollary}[Undecidability of translational monotilings  in $\Z^d$]\label{cor:mainZd}
    There is no algorithm which upon any input of 
    \begin{itemize}
        \item $d\geq 3$;
        \item a periodic set $E\subset \Z^d$;
        \item a finite set $F\subset \Z^d$,
    \end{itemize}
    computes (in finite time) whether $F$ tiles $E$.
\end{corollary}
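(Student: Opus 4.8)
The plan is to deduce Corollary~\ref{cor:mainZd} from Theorem~\ref{thm:main} by an explicit, computable many-one reduction: I will convert each instance $(G_0,E,F)$ of the (undecidable) tiling problem in $\Z^2\times G_0$ into an instance $(d,\widetilde E,\widetilde F)$ of the tiling problem in $\Z^d$ with $d\geq 3$, so that $F$ tiles $\Z^2\times E$ if and only if $\widetilde F$ tiles $\widetilde E$. Once this is in place, any algorithm deciding the $\Z^d$ problem would, after precomposition with the (clearly computable) reduction, decide the $\Z^2\times G_0$ problem, contradicting Theorem~\ref{thm:main}; hence no such algorithm can exist. This is exactly the ``pulling back'' strategy, and the equivalence of the two tiling problems is precisely the content of the pullback results \cite[Theorem 1.1]{bgu} and \cite[Section 9]{GT21}.

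To build the reduction I present $G_0$ through its invariant factors, $G_0\cong\prod_{i=1}^{k}\Z/n_i\Z$ (computable from the finite group data via Smith normal form), and set $d\coloneqq 2+k$ and $\Lambda\coloneqq n_1\Z\times\cdots\times n_k\Z\subseteq\Z^k$, so that $\Z^2\times G_0\cong\Z^d/K$ with $K\coloneqq\{0\}^2\times\Lambda$; write $\pi\colon\Z^d\to\Z^2\times G_0$ for the quotient map. Since $\Z^2\times E$ is invariant under the finite-index subgroup $\Z^2\times\{0\}$, its preimage $\widetilde E\coloneqq\pi^{-1}(\Z^2\times E)$ is invariant under the finite-index subgroup $\Z^2\times\Lambda$ of $\Z^d$, hence is a (strongly) periodic set, as required. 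For the tile I use the transversal $R\coloneqq\{0,\dots,n_1-1\}\times\cdots\times\{0,\dots,n_k-1\}$, which is a fundamental domain for $\Lambda$ in $\Z^k$, and lift $F$ through the bijection $\pi|_{\Z^2\times R}$: concretely $\widetilde F\coloneqq\{(x,r_g):(x,g)\in F\}\subseteq\Z^2\times R$, where $r_g\in R$ is the standard representative of $g$. All of these objects are finite or finitely presented and are produced from $(G_0,E,F)$ by an explicit algorithm.

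The forward implication is direct. If $A\oplus F=\Z^2\times E$ in $\Z^2\times G_0$, then $\widetilde A\coloneqq\pi^{-1}(A)$ satisfies $\widetilde A\oplus\widetilde F=\widetilde E$ in $\Z^d$: the identity $\pi(\widetilde A+\widetilde F)=A+F=\Z^2\times E$ forces $\widetilde A+\widetilde F\subseteq\widetilde E$, and for each $w\in\widetilde E$ the unique representation $\pi(w)=a+(x',g)$ with $a\in A$, $(x',g)\in F$ lifts, via $\widetilde F\subseteq\Z^2\times R$ (the last $k$ coordinates pin down $z\in\Z^k$ exactly, not merely modulo $\Lambda$), to a unique representation $w=(x,z)+(x',r_g)$ with $(x,z)\in\widetilde A$. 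The reverse implication (recovering a tiling of $\Z^2\times E$ from one of $\widetilde E$) is the subtle point, and is where I expect the main difficulty to lie: a solution $\widetilde A\oplus\widetilde F=\widetilde E$ need not be $K$-periodic, so one cannot simply push it forward by $\pi$. It is precisely here that I invoke \cite[Theorem 1.1]{bgu} (or the argument of \cite[Section 9]{GT21}), whose role is to guarantee that, because $\widetilde F$ lies in a single fundamental domain $\Z^2\times R$ of $K$ and $\widetilde E=\pi^{-1}(\Z^2\times E)$ is a full $K$-saturated preimage, any tiling upstairs descends to a tiling of $\Z^2\times E$ by $F$ downstairs.

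Finally, to remain in the regime $d\geq 3$ I only need the undecidable instances of Theorem~\ref{thm:main} to have nontrivial $G_0$, so that $k\geq 1$; this is automatic, since for trivial $G_0$ the problem reduces to tiling in $\Z^2$, which is decidable by \cite{BH}. If one prefers to enforce nontriviality uniformly, it is harmless to replace $(G_0,E,F)$ by $(G_0\times\Z/2\Z,\,E\times\Z/2\Z,\,F\times\{0,1\})$, which preserves tileability (take $A\times\{0\}$ as the new complement). Corollary~\ref{cor:mainZd} then follows.
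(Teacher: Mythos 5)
Your proposal is correct and follows the same route as the paper: the paper's entire proof of Corollary~\ref{cor:mainZd} is to ``pull back'' the undecidable problem of Theorem~\ref{thm:main} along the quotient $\Z^d \to \Z^2\times G_0$, citing \cite[Theorem 1.1]{bgu} or \cite[Section 9]{GT21} for the equivalence of the two tiling problems. You have simply made the reduction explicit (invariant factors, preimage of $E$, lift of $F$ through a fundamental domain) and correctly located the one nontrivial step --- descending a not-necessarily-$K$-periodic tiling upstairs --- in exactly the references the paper invokes.
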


\begin{remark}
In terms of \emph{logical undecidability} (or independence of ZFC), Theorem \ref{thm:main} implies the existence of an explicit finite Abelian group $G_0$, a subset $E\subset G_0$ and a finite $F\subset \Z^2\times G_0$ such that the tiling equation $\Tile(F;\Z^2\times E)$ is logically undecidable. Similarly, Corollary \ref{cor:mainZd} implies the existence of an explicit $d\geq 3$, a periodic subset $E$ of $\Z^d$ and a finite $F\subset \Z^d$ such that the tiling equation $\Tile(F;\Z^d)$ is logically undecidable. See \cite[Section 1.1]{GT21} for an short introduction to the notion of logical decidability and its correlation with the notion of algorithmic decidability.
\end{remark}

By applying \cite[Lemma 2.2]{GT22}, we can encode tiling problems in $\Z^d$ as tiling problems in $\R^d$, where the tile is now a finite union of lattice cubes, and obtain an analogue of Corollary \ref{cor:mainZd} in $\R^d$ for such monotilings (after making suitable modifications to the definitions as we are now in a continuous setting rather than a discrete one).  
Furthermore, one can apply the \emph{folded bridge construction} introduced in \cite[Section 2]{GK} to encode translational monotiling problems in $\Z^d$ as a translational monotiling problems with  \emph{connected}\footnote{See \cite[Definition 2.1]{GK}.} monotiles in $\Z^{d+2}$; by another application of \cite[Lemma 2.2]{GT22}, this would yield the undecidability of the translational monotiling problem  for connected monotiles in $\R^d$. 
We leave the precise details of this analogue to the interested reader.

\subsection{The scope of the argument}
Our argument is based on a variant of the Sudoku puzzle construction used in our previous paper \cite{GT22} to produce an aperiodic translational monotiling. However, as opposed to the previous paper where our goal was to encode the aperiodic nature of the $q$-adic Sudoku puzzle as a translational monotiling problem (which then must be aperiodic as well),
here we take further advantage of a $p_1\times p_2$-adic structure and its infinitely many scales to also encode a certain \emph{domino problem} as a \emph{decorated $p_1\times p_2$-adic Sudoku problem}. This domino problem can be viewed as a  generalization of the Wang tiling problem and is, therefore, undecidable. This, in turn, gives the undecidability of  decorated $p_1\times p_2$-adic Sudoku puzzles. As in \cite{GT22}, we can now  ``program'' any such Sudoku puzzle as a translational monotiling problem, to conclude the undecidability of translational monotilings.

The paper is organized as follows:

\begin{itemize}
    \item In Section \ref{sec:2} we introduce a \emph{domino problem} and show that this problem is undecidable. 
    \item Sections \ref{sec:3}, \ref{sec:4} and \ref{sec:5} are devoted to the construction of a Sudoku-type puzzle that encodes the domino problem: 
    \begin{itemize}
        \item[] In Section \ref{sec:3} we introduce the notion of \emph{$(\cS,\cC)$-Sudoku puzzles}, which generalizes the  construction in \cite[Section 7]{GT22}.
        \item[] In Section \ref{sec:4}, as an instance of a $(\cS,\cC)$-Sudoku puzzle introduced in the previous section, we  construct a \emph{$p$-adic Sudoku puzzle} (for a sufficiently large prime $p$) and explore its properties.
        \item[] Building on that, in Section \ref{sec:5} we finally construct a \emph{decorated $p_1\times p_2$-Sudoku puzzle} (which is another special case of a $(\cS,\cC)$-Sudoku puzzle), and show that any domino problem can be encoded as such a Sudoku puzzle.
    \end{itemize}    
    \item Finally, in Section \ref{sec:6}, using our tiling encoding approach we previously developed in \cite{GT21, GT22}, we show that any  $(\cS,\cC)$-Sudoku puzzle can be encoded as a translational monotiling problem in a virtually $\Z^2$ space.
\end{itemize}

A high-level roadmap for the argument in the paper is given in Figure \ref{fig:logic1}.

\subsection{Notation} 

The natural numbers $\N = \{0,1,2,\dots\}$ will start at $0$ in this paper.  For any modulus $q \geq 1$, we let $\pi_q \colon \Z \to \Z/q\Z$ be the projection homomorphism $\pi_q(n) \coloneqq n \pmod{q}$.

If $A \subset G$ is a subset of an Abelian group $G = (G,+)$ and $h \in G$, we write 
$$h+A \coloneqq A+h \coloneqq \{ a+h: a \in A \}$$
for the translate of $A$, and
$$ -A \coloneqq \{-a: a \in A \}$$
for the reflection of $A$.  We write $A \uplus B$ for the disjoint union of $A$ and $B$ (defined to equal $A \cup B$ when $A,B$ are disjoint, and undefined otherwise).

\subsection{Acknowledgements} We are grateful to Emmanuel Jeandel for bringing to our attention the important references \cite{al, Lew}. We also gratefully acknowledge the hospitality and support of the Institute for Advanced Study during
the Special Year on Dynamics, Additive Number Theory and Algebraic Geometry, where a significant portion of this research was conducted. 

The first author is supported by the Association of Members of the Institute for Advanced Study (AMIAS) and by NSF grant  DMS-2242871. The second author is supported by NSF grant DMS-1764034. 

\section{Domino functions}\label{sec:2}

In this section we develop the concept of a \emph{domino function}, which can be thought of as a generalization of the concept of a Wang tiling.  In particular, the undecidability of the Wang tiling problem will imply the undecidability of the domino function problem.

Domino functions will be defined on subsets of a ``domino board'' $\Z^2$.  We set out some basic notation for this board:

\begin{definition}[The domino board]  We define the \emph{domino board} to be the standard lattice
$$ \Z^2 \coloneqq \{ (s_1,s_2): s_1, s_2 \in \Z \}$$
with generators
$$ e_1 \coloneqq (1,0); \quad e_2 \coloneqq (0,1).$$
In order to maintain the analogy with actual domino boards, we will identify $(s_1,s_2)$ in $\Z^2$ with the unit square $[s_1,s_1+1] \times [s_2,s_2+1]$ in $\R^2$, and thus refer to elements of this board as ``unit squares''.  We place the product order $\leq$ on $\Z^2$, thus $(s_1, s_2) \leq (t_1,t_2)$ if and only if $s_1 \leq t_1$ and $s_2 \leq t_2$.  If $r, t \in \Z^2$ are such that $r \leq t$, we define the \emph{rectangle} $[r,t]$ to be the set
$$ [r,t] \coloneqq \{ s \in \Z^2: r \leq s \leq t \};$$
thus, in coordinates
$$ [(r_1,r_2), (t_1,t_2)] = \{ (s_1,s_2) \in \Z^2: r_1 \leq s_1 \leq t_1; r_2 \leq s_2 \leq t_2 \}.$$
A rectangle of the form $[r,r+e_1]$ will be called a \emph{horizontal domino tile}, and a rectangle of the form $[r,r+e_2]$ will be called a \emph{vertical domino tile}, with these two types of rectangles collectively referred to as \emph{domino tiles}; thus a domino tile consists of two adjacent unit squares.
\end{definition}

One can think of the domino tiles as describing the Cayley graph structure on $\Z^2$ induced by the generators $e_1, e_2$.

\begin{definition}[Domino function]\label{def:dom}  A \emph{domino set}
$$ \cR = (\cW, \cR_1, \cR_2)$$
is an ordered triple consisting of a non-empty finite set $\cW$ (whose elements we call \emph{pips}) and a pair of subsets $\cR_1, \cR_2$ of $\cW^2$, which we call the \emph{horizontal domino set} and \emph{vertical domino set} respectively. If $\cR = (\cW, \cR_1, \cR_2)$ is a domino set and $\Omega \subset \Z^2$ is a portion of the domino board, then a \emph{$\cR$-domino function} on $\Omega$ is a function
    $$\cT \colon \Omega \to \cW$$
    that places a pip $\cT(s) \in \cW$ on every unit square $s$ in $\Omega$, with the property
    $$ (\cT(s), \cT(s+e_i)) \in \cR_i$$
    whenever $[s,s+e_i]$ is a domino tile in $\Omega$.  We say that the \emph{$\cR$-domino problem is solvable on $\Omega$} if there exists at least one $\cR$-domino function on $\Omega$.
\end{definition}

Informally, the horizontal and vertical domino sets $\cR_1, \cR_2$ describe the permitted values of the domino function on horizontal and vertical domino tiles respectively.  A $\cR$-domino tiling on a set $\Omega$ is then an assignment of pips in $\cW$ to each square in $\Omega$ in such a way that every domino tile in this collection is permitted.

\begin{example}[Semistandard Young tableaux]  Let $\Omega \subset \N^2 \subset \Z^2$ be a Young shape, which we orient in the ``French'' style with the longest row at the bottom (see Figure \ref{fig:semistandard}).  A semi-standard Young tableau on $\Omega$ using the numbers $\{1,\dots,k\}$ (i.e., a labeling of $\Omega$ by numbers in $\{1,\dots,k\}$ that is weakly increasing along each row and strongly increasing up each column), is precisely an $\cR$-domino function on $\Omega$, where the domino set $\cR = (\cW,\cR_1,\cR_2)$ is defined by selecting the set of pips $\cW = \{1,\dots,k\}$, the horizontal domino set
$$ \cR_1 \coloneqq \{ (i,j) \in \cW^2: i \leq j \}$$
and the vertical domino set
$$ \cR_2 \coloneqq \{ (i,j) \in \cW^2: i < j \}.$$
The $\cR$-domino problem on $\Omega$ is solvable if and only if the Young shape $\Omega$ has at most $k$ rows.
\end{example}

\begin{figure}
    \centering
\centerline{\includegraphics[height=3in]{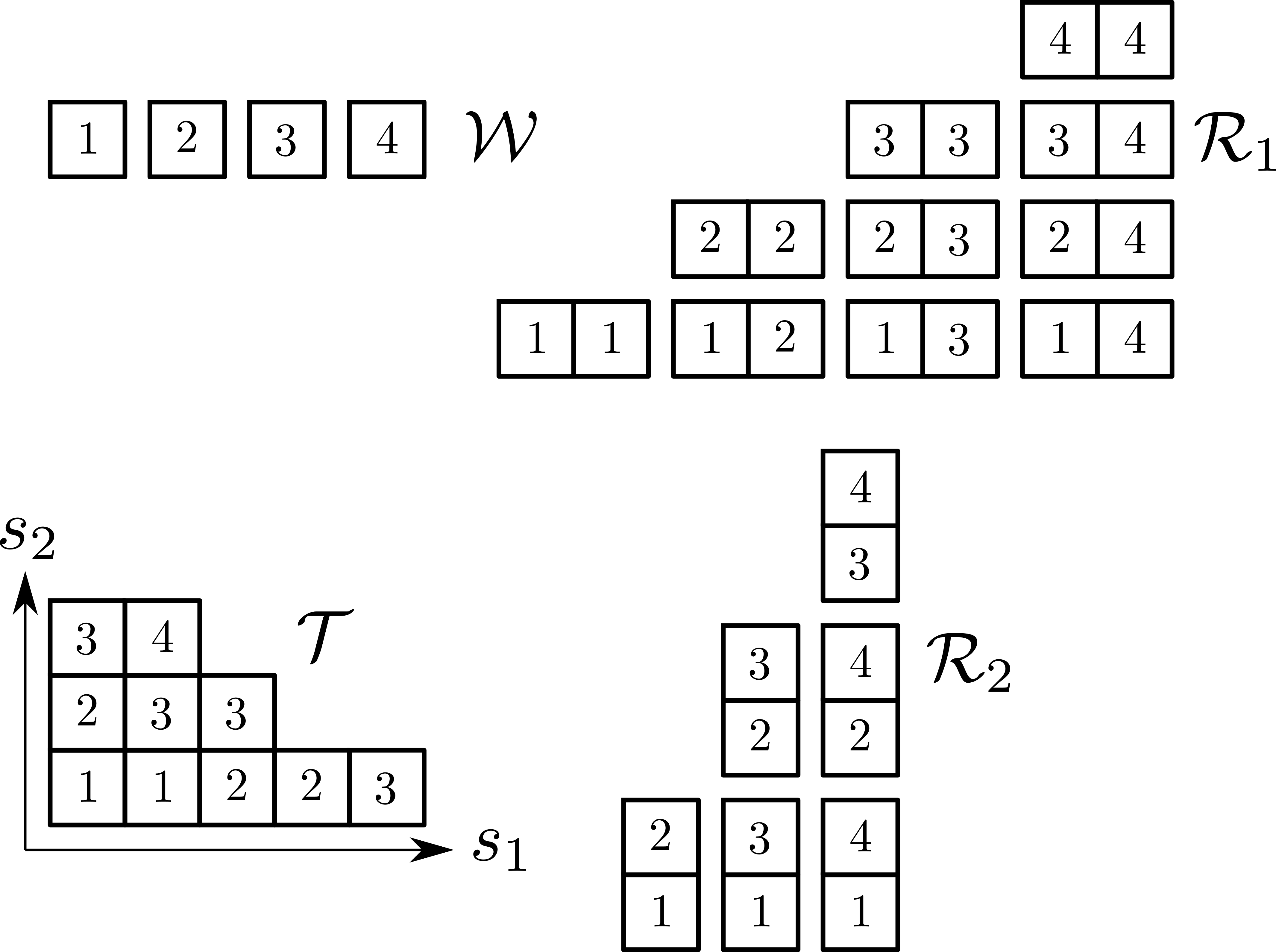}}
    \caption{A semistandard Young tableau $\cT$ on the Young shape $\Omega$ associated to a partition $(5,3,2)$, with $k=4$ (so $\cW = \{1,2,3,4\}$ is the set of pips), together with the horizontal domino set $\cR_1$ and the vertical domino set $\cR_2$, where elements of $\cW^2$ are identified with labelings of horizontal or vertical domino tiles in the obvious fashion.  Note how every horizontal or vertical domino tile in $\Omega$ is labeled by $\cT$ with an element of the domino set associated with the horizontal or vertical domino set respectively.}
    \label{fig:semistandard}
\end{figure}

\begin{remark}[Sheaf-theoretic interpretation]  One can view $\cR$-domino functions as sections of a certain sheaf, defined as follows.  Let 
$$ X \coloneqq \Z^2 \uplus \{ [s,s+e_l]: s \in \Z^2; l=1,2 \}$$
be the disjoint union of the domino board $\Z^2$ with the collection of its domino tiles.
We impose a (non-Hausdorff) topology on $X$ by defining an open set to be a set of the form
$$ U_\Omega \coloneqq \Omega \uplus \{ [s,s+e_l]: l=1,2; s, s+e_l\in \Omega \}$$
for some $\Omega \subset \Z^2$ (thus $U_\Omega$ consists of the set $\Omega$ and its domino tiles); one easily verifies that this collection obeys the axioms of a topology.  Every $\cR$-domino function $\cT \colon \Omega \to \{1,\dots,k\}$ on a subset $\Omega$ of the domino board $\Z^2$ defines a function
$$ \overline\cT \colon U_\Omega \to \{1,\dots,k\} \cup \cR_1 \cup \cR_2$$
by the formulae
\begin{align*}
\overline\cT(s) &\coloneqq \cT(s)\\    
\overline\cT([s,s+e_l]) &\coloneqq (\cT(s), \cT(s+e_l))
\end{align*}
for $s \in \Omega$ and $l=1,2$; we then define $F(U_\Omega)$ to be the collection of all functions $\overline\cT$ obtained in this fashion.  One can then verify that these collections $F(U_\Omega)$ obey the axioms of a sheaf over $X$ (with the usual restriction operators, and with stalks identified with the set of pips $\cW$), thus a $\cR$-domino function on $\Omega$ is identified with a section of the sheaf over $U_\Omega$.    
However, we will not use the language of sheaves further here.  We remark that a related sheaf structure on more general graphs than $\Z^2$ (in which the stalks are now vector spaces, and the domino sets are replaced by linear compatibility conditions) appears in \cite{Fried}.
\end{remark}

\subsection{The domino problem}

We have the following simple equivalence:

\begin{lemma}[Equivalent forms of the domino problem]\label{dom-equiv}  Let $\cR$ be a domino set.  Then the following are equivalent:
\begin{itemize}
    \item[(i)] The domino problem is solvable on $\Z^2$.
    \item[(ii)] The domino problem is solvable on $\N^2$.
    \item[(iii)]  The domino problem is solvable on the rectangle $[(0,0),r]$ for any $r \geq (0,0)$.
    \item[(iv)]  The domino problem is solvable on the rectangle $[-r,r]$ for any $r \geq (0,0)$.
\end{itemize}
\end{lemma}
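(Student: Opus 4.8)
The plan is to prove the cyclic chain of implications $(i)\Rightarrow(ii)\Rightarrow(iii)\Rightarrow(iv)\Rightarrow(i)$, which establishes the equivalence of all four statements. The two structural features of $\cR$-domino functions that drive the argument are that the defining constraints $(\cT(s),\cT(s+e_i))\in\cR_i$ are \emph{local} (each involves only a single domino tile) and \emph{translation-invariant} (they do not depend on the location of the tile). The first three implications are elementary consequences of these features, and the only substantive step is $(iv)\Rightarrow(i)$, which is a compactness argument.

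For $(i)\Rightarrow(ii)$ and $(ii)\Rightarrow(iii)$ I would use restriction: if $\cT$ is an $\cR$-domino function on $\Omega$ and $\Omega'\subset\Omega$, then $\cT|_{\Omega'}$ is again an $\cR$-domino function, since every domino tile of $\Omega'$ is a domino tile of $\Omega$. As $\N^2\subset\Z^2$, and as $[(0,0),r]\subset\N^2$ for every $r\geq(0,0)$, both implications follow at once. For $(iii)\Rightarrow(iv)$ I would invoke translation-invariance: given $r\geq(0,0)$, the shift $s\mapsto s+r$ is a bijection of $[-r,r]$ onto $[(0,0),2r]$ that carries domino tiles to domino tiles, so if $\cT'$ is an $\cR$-domino function on $[(0,0),2r]$ (which exists by $(iii)$ applied with $2r$ in place of $r$), then $\cT(s)\coloneqq\cT'(s+r)$ is an $\cR$-domino function on $[-r,r]$.

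The crux is $(iv)\Rightarrow(i)$, which I would handle by compactness. Setting $r_n\coloneqq(n,n)$, hypothesis $(iv)$ supplies for each $n$ an $\cR$-domino function $\cT_n$ on $[-r_n,r_n]$; I extend each to a map $\Z^2\to\cW$ by placing a fixed arbitrary pip outside the rectangle. Since $\cW$ is finite and $\Z^2$ is countable, the product space $\cW^{\Z^2}$ is compact and metrizable, so some subsequence of $(\cT_n)$ converges pointwise to a function $\cT\colon\Z^2\to\cW$ (alternatively one may run a diagonal argument, or apply K\"onig's lemma to the tree of partial solutions on the nested rectangles). To verify that $\cT$ is an $\cR$-domino function, fix any domino tile $[s,s+e_i]\subset\Z^2$; since the rectangles $[-r_n,r_n]$ exhaust $\Z^2$, this tile lies in $[-r_n,r_n]$ for all large $n$, and pointwise convergence in the discrete set $\cW$ forces $\cT_n(s)=\cT(s)$ and $\cT_n(s+e_i)=\cT(s+e_i)$ for large $n$ along the subsequence, whence $(\cT(s),\cT(s+e_i))=(\cT_n(s),\cT_n(s+e_i))\in\cR_i$. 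Thus $\cT$ solves the domino problem on $\Z^2$, closing the cycle. The main obstacle is precisely this compactness step: one must arrange that the chosen rectangles exhaust $\Z^2$ so that every tile is eventually captured, and then rely on the finiteness of $\cW$ together with the locality of the constraints to push them through to the limit; the remaining implications are bookkeeping.
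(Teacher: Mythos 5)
Your proposal is correct and follows essentially the same route as the paper: the first two implications by restriction along the inclusions $[(0,0),r]\subset\N^2\subset\Z^2$, the third by replacing $r$ with $2r$ and translating by $-r$, and the last by a compactness/diagonal argument on the product space $\cW^{\Z^2}$. The paper merely states these steps more tersely (citing the compactness theorem in logic, Tychonoff, or an ultrafilter limit for the final implication), while you spell out the limiting argument in full; the content is the same.
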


\begin{proof} From the inclusions $[(0,0),r]\subset \N^2\subset \Z^2$ we see that (i) implies (ii) implies (iii).  The implication of (iv) from (iii) follows by replacing $r$ by $2r$ and then translating by $-r$.  Finally, the implication of (i) from (iv) follows from the compactness theorem in logic (or from other standard compactness theorems, such as Tychonoff's theorem or the Arzel\'a--Ascoli theorem); a closely related way to proceed is to perform a limit along an ultrafilter.
\end{proof}

 $\cR$-domino problems include \emph{Wang tiling problems} as a special case.  We pause to set up the notation for Wang tilings.

\begin{definition}[Wang tilings]\label{def:wang}  Let $H, V$ be finite sets (of ``colors'').  A \emph{Wang tile} is a quadruplet $(t_W, t_E, t_S, t_N)$ in $H^2 \times V^2$.  Given a collection $\cW \subset H^2 \times V^2$ of Wang tiles, a \emph{$\cW$-Wang tiling} is an assignment $\cT \colon \Z^2 \to \cW$ of a Wang tile
$$ \cT(s) = (\cT_W(s), \cT_E(s), \cT_S(s), \cT_N(s)) \in \cW$$
in $\cW$ to each unit square $s$ in the domino board $\Z^2$, such that one has the compatibility conditions
$$    
\cT_E(s) = \cT_W(s+e_1); \quad \cT_N(s) = \cT_S(s+e_2)
$$
for all $s \in \Z^2$.  We say that the $\cW$-Wang tiling problem is \emph{solvable} if there exists at least one $\cW$-Wang tiling.  
\end{definition}

Suppose one has a collection $\cW \subset H^2 \times V^2$ of Wang tiles
$$ t = (t_{W}, t_{E}, t_{S}, t_{N}).$$
One can then associate a pair $(\cR_1, \cR_2)$ of subsets $\cR_1, \cR_2$ of $\cW^2$ by the formulae
\begin{align}
\cR_1 &\coloneqq \{ (t,t') \in \cW^2: t_{E} = t'_{W} \}\label{r1-def} \\
\cR_2 &\coloneqq \{ (t,t') \in \cW^2: t_{N} = t'_{S} \}.\label{r2-def}
\end{align}

By inspecting Definitions \ref{def:dom} and \ref{def:wang}, we see that a function $\cT \colon \Z^2 \to \cW$ is a $\cW$-Wang tiling if and only if it is a $(\cW, \cR_1, \cR_2)$-domino function.  In particular, the $\cW$-Wang tiling problem is solvable if and only if the $(\cW, \cR_1, \cR_2)$-domino problem is solvable on $\Z^2$.  Employing the celebrared result of Berger \cite{Ber, Ber-thesis, Rob, drs} that there is no algorithm that, when given an arbitrary Wang tile-set $\cW$, decides in finite time whether the $\cW$-Wang tiling problem solvable (see also \cite{K07, al, Lew, SM} for alternative proofs).  Combining this with the above correspondence and Lemma \ref{dom-equiv}, we conclude

\begin{corollary}[Undecidability of the domino problem]\label{undecided}  There is no algorithm that, when given an arbitrary domino set $\cR$, decides in finite time whether the $\cR$-domino tiling problem is solvable on $\N^2$.
\end{corollary}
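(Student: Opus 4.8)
The plan is to reduce the claimed undecidability of the $\cR$-domino problem on $\N^2$ to Berger's undecidability of the Wang tiling problem, which has already been set up in the excerpt. The reduction is almost entirely bookkeeping, so the real content is assembling the three ingredients that immediately precede the statement: (a) the explicit correspondence $\cW \mapsto (\cW, \cR_1, \cR_2)$ defined by \eqref{r1-def}--\eqref{r2-def}, under which a $\cW$-Wang tiling is \emph{exactly} a $(\cW,\cR_1,\cR_2)$-domino function on $\Z^2$; (b) Berger's theorem that no algorithm decides whether an arbitrary $\cW$-Wang tiling problem is solvable; and (c) \lemref{dom-equiv}, which shows solvability on $\N^2$ is equivalent to solvability on $\Z^2$.

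First I would argue by contradiction: suppose some algorithm $\mathcal{A}$ decides, for any domino set $\cR$, whether the $\cR$-domino problem is solvable on $\N^2$. I would then describe an algorithm for the Wang tiling problem as follows. Given an arbitrary finite Wang tile-set $\cW \subset H^2 \times V^2$, compute the pair $(\cR_1, \cR_2)$ via the formulae \eqref{r1-def} and \eqref{r2-def}; this is a finite, effective computation since $\cW$ is finite and the defining conditions $t_E = t'_W$, $t_N = t'_S$ are decidable for each pair $(t,t') \in \cW^2$. This produces a domino set $\cR = (\cW, \cR_1, \cR_2)$. Now run $\mathcal{A}$ on $\cR$ to decide solvability of the $\cR$-domino problem on $\N^2$. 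By \lemref{dom-equiv}, this is equivalent to solvability on $\Z^2$, and by the correspondence (a) this is in turn equivalent to solvability of the $\cW$-Wang tiling problem. Hence $\mathcal{A}$, composed with the effective preprocessing step, would decide the Wang tiling problem, contradicting Berger's theorem (b). Therefore no such $\mathcal{A}$ exists.

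There is no genuine obstacle here, since every equivalence has been established earlier in the excerpt; the only points needing care are that the map $\cW \mapsto \cR$ is genuinely computable (immediate from finiteness) and that the chain of equivalences is invoked in the correct direction. The one subtlety worth stating explicitly is why the reduction is \emph{many-one} in the right sense: a decision procedure for the target problem (domino on $\N^2$) must yield a decision procedure for the source problem (Wang on $\Z^2$), so I must verify that the preprocessing goes from Wang tile-sets \emph{into} domino sets, and that \emph{both} YES-instances and NO-instances are preserved. The correspondence (a) is a biconditional on $\Z^2$, and \lemref{dom-equiv} gives the biconditional between $\Z^2$ and $\N^2$, so both directions are covered and the reduction is valid. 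Concluding, I would simply assemble these observations into the contrapositive statement of the corollary, noting that undecidability transfers along effective many-one reductions.
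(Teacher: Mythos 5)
Your proposal is correct and follows essentially the same route as the paper: the computable map $\cW \mapsto (\cW,\cR_1,\cR_2)$ via \eqref{r1-def}--\eqref{r2-def}, the observation that $\cW$-Wang tilings are exactly $(\cW,\cR_1,\cR_2)$-domino functions, Lemma~\ref{dom-equiv} to pass from $\Z^2$ to $\N^2$, and Berger's theorem. The extra care you take about effectiveness of the preprocessing and preservation of both YES- and NO-instances is implicit in the paper but entirely consistent with its argument.
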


\begin{remark}  While all Wang tiling problems can be encoded as domino problems, the converse is not clear.  The horizontal and vertical domino sets $\cR_l, l=1,2$ formed from the construction \eqref{r1-def}, \eqref{r2-def} are not arbitrary subsets of $\cW^2$, but obey the following ``rectangular closure property'': if $\cR_l$ contains three vertices $(a,b), (a,b'), (a',b)$ of a ``rectangle'' in $\cW$, then it must also contain the fourth vertex $(a',b')$.  Indeed, if  $(a,b), (a,b'), (a',b)$ are in $\cR_1$ then 
$$a'_E = b_W = a_E = b'_W,$$ 
hence $(a',b')\in\cR_1$.  Similarly for $\cR_2$.
It is easy to construct domino problems which do not obey this rectangular closure property, and which are thus not obviously encodable as a Wang tiling problem.  We will however not use such ``exotic'' domino problems in this paper, as we will only need the domino problem provided by Corollary \ref{undecided} which does come from a Wang tiling and thus obeys the rectangular closure property.
\end{remark}

\section{Sudoku puzzles}\label{sec:3}

We now recall the notion of a \emph{$\cS$-Sudoku puzzle} from \cite{GT22}.

\begin{definition}[Sudoku puzzles]\label{sudoku-def}  Let $N$ be a natural number (which we call the \emph{width}), and let $\Sigma$ be a non-empty finite set (the \emph{digit set}).  The \emph{Sudoku board} $\mathbb{B} = \mathbb{B}_N$ is the set
$$ \mathbb{B} \coloneqq \{0,\dots,N-1\} \times \Z.$$
Elements $(n,m)$ of the Sudoku board $\mathbb{B}$ will be referred to as \emph{cells}; conceptually we keep them separate from the unit squares $(s_1,s_2)$ of the domino board $\Z^2$, as they will play a rather different role.  We isolate some collections of cells of relevance to our arguments:
\begin{itemize}
    \item A \emph{column} is a set of cells of the form $\{n\} \times \Z$ for some $0 \leq n \leq N-1$.
    \item A \emph{non-vertical line} $\ell = \ell_{j,i}$ is a set of cells of the form
$$ \ell_{j,i} \coloneqq \{ (n,jn+i): 0 \leq n \leq N-1 \}$$
    for some \emph{slope} $j \in \Z$ and \emph{intercept} $i \in \Z$.
    \item A \emph{row} is a non-vertical line of slope $0$, that is to say a set of cells of the form $\{0,\dots,N-1\} \times \{m\}$ for some $m \in \Z$.
    \item A \emph{diagonal} is a non-vertical line of slope $1$, that is to say a set of cells of the form $\{ (n, n+i): 0 \leq n \leq N-1\}$ for some $i \in \Z$.
    \item An \emph{anti-diagonal} is a non-vertical line of slope $-1$, that is to say a set of cells of the form $\{ (n, i-n): 0 \leq n \leq N-1 \}$ for some $i \in \Z$.
    \item A \emph{square} $Q_{n_0,m_0}$ is a set of cells of the form\footnote{In \cite{GT22} the squares had sidelength $8$, but for our current arguments it is more natural to take squares of sidelength $4$.} 
    \begin{equation}\label{square}
    Q_{n_0,m_0} \coloneqq \{n_0,\dots,n_0+3\} \times \{m_0,\dots,m_0+3\}
    \end{equation}
    for some $0 \leq n_0 \leq N-4$ and $m_0 \in \Z$.
\end{itemize}

See Figure \ref{fig:board_p}.

\begin{figure}
    \centering
    \includegraphics[width = .9\textwidth]{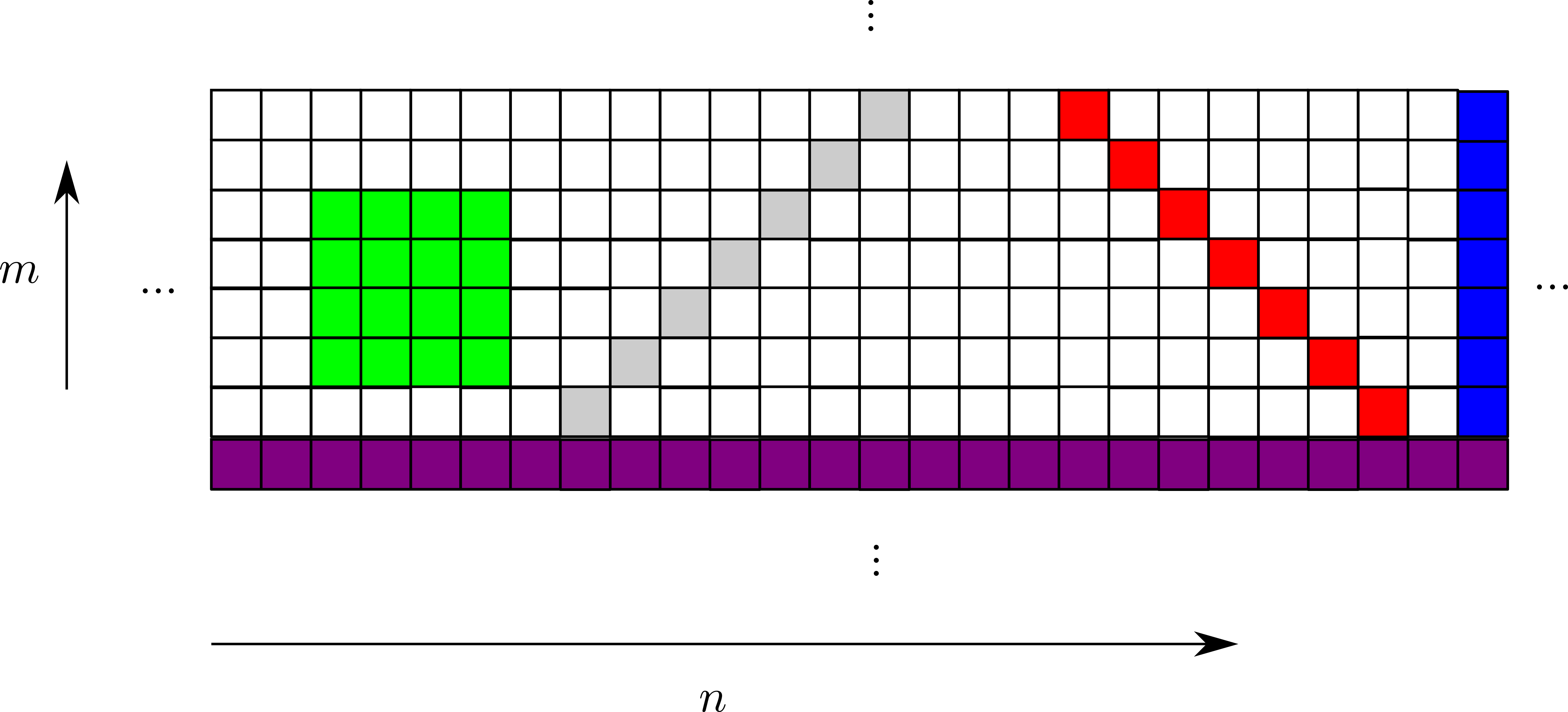}
    \caption{A portion of the Sudoku board $\mathbb{B}$, with some selected (overlapping) objects: a column (in blue), a row (in purple), a diagonal (in gray), an antidiagonal (in red), and a square (in green). }
    \label{fig:board_p}
\end{figure}

A \emph{Sudoku rule} $\cS= \cS_{N,\Sigma}$ with width $N$ and digit set $\Sigma$, is a collection of functions $$g \colon \{0,\dots,N-1\} \to \Sigma;$$ thus $\cS$ can be identified with a subset of $\Sigma^N$.  If $\cS$ is a Sudoku rule, a \emph{$\cS$-Sudoku solution} is a function $F \colon \mathbb{B} \to \Sigma$ with the property that for each non-vertical line $\ell_{j,i}$, the function
$$ n \mapsto F(n, jn+i),$$
(which parameterizes the restriction of $F$ to the line $\ell_{j,i}$) lies in $\cS$.

An \emph{initial condition} $\cC = \cC_q$ with period $q \geq 1$, is a set $\cC \subset (\Z/q\Z) \times \Sigma$. A $\cS$-Sudoku solution $F \colon \mathbb{B} \to \Sigma$ is said to obey the \emph{initial condition $\cC$}  if, for every column $\{n\} \times \Z$ in $\mathbb{B}$, there exists a permutation $\sigma_n \colon \Z/q\Z \to \Z/q\Z$ such that
$$ ( \sigma_n(\pi_q(m)), F(n,m) ) \in \cC$$
for all $(n,m)$ in the column.  We say that the $(\cS, \cC)$-Sudoku puzzle is \emph{solvable} if there exists a $\cS$-Sudoku solution $F \colon \mathbb{B} \to \Sigma$ which obeys the initial condition $\cC$. 
\end{definition}

As in \cite{GT22}, one can think of a Sudoku rule $\cS$ as analogous to the requirement in a traditional Sudoku puzzle that the digits (in the digit set $\Sigma=\{1,\dots,9\})$ assigned to every row, column, and $3 \times 3$ block in a $9 \times 9$ grid form a permutation.  The initial condition $\cC$ is somewhat analogous to the initial digits that are already filled in at the start of the puzzle.  The ``good columns'' condition appearing in \cite{GT22} can be viewed as analogous to a special case of an initial condition $\cC$.

Theorem \ref{thm:main} will now follow from combining Corollary \ref{undecided} with the following two theorems, as illustrated in  Figure \ref{fig:logic1}.

\begin{theorem}[Encoding domino problems as Sudoku puzzles]\label{domino-to-sudoku} Given a domino set $\cR$, one can generate (in finite time) a Sudoku rule $\cS$ and an initial condition $\cC$, such that the $\cR$-domino problem is solvable on $\N^2$ if and only if the $(\cS, \cC)$ Sudoku puzzle is solvable.
\end{theorem}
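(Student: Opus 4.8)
The plan is to realize the two-prime idea of Aanderaa--Lewis inside the $(\cS,\cC)$-Sudoku formalism of Definition \ref{sudoku-def}. First I would fix two distinct primes $p_1,p_2$, chosen sufficiently large relative to the cardinality of the pip set $\cW$ (and relative to the data $\cR_1,\cR_2$), and build the rule $\cS$ and the initial condition $\cC$ as an overlay of two components: a \emph{skeleton} that pins down a rigid $p_1\times p_2$-adic hierarchical structure on every solution, and a \emph{decoration} carrying values in $\cW$. Concretely I would take the digit set to be (essentially) a product $\Sigma=\Sigma_{\mathrm{sk}}\times\cW$, let the $\Sigma_{\mathrm{sk}}$-component be governed by the $p$-adic Sudoku rule of Section \ref{sec:4} (applied simultaneously for $p_1$ and $p_2$, which is where the product structure is used), and let the $\cW$-component be constrained by the additional local rules described below. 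The initial condition $\cC$ would play the same role as in \cite{GT22}: it seeds the skeleton so that the hierarchy is forced to ``start correctly'' and is anchored at the origin of the domino board.

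The heart of the argument is a rigidity statement for the skeleton, which I would extract from Section \ref{sec:4}: every solution of the skeleton puzzle is forced into a self-similar hierarchy possessing infinitely many $p_1\times p_2$-adic scales, and to each such scale one can canonically attach a \emph{represented point} $s\in\N^2$ of the domino board. The two distinct primes are precisely what make the target board genuinely two-dimensional: the $p_1$-adic data of the hierarchy track (say) the horizontal coordinate and the $p_2$-adic data track the vertical one, so that the horizontal and vertical neighbour relations of $\Z^2$ become separately visible in the hierarchy. I would arrange matters so that (a) every point of $\N^2$ is the represented point of (infinitely many) scales, and (b) for each $s\in\N^2$ and each $i\in\{1,2\}$ the domino tile $[s,s+e_i]$ is \emph{witnessed}: there is a pair of nearby scales whose represented points are $s$ and $s+e_i$, and this pair is exhibited within a bounded window of a single non-vertical line. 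Property (b) is essential because a Sudoku rule $\cS\subset\Sigma^N$ can only constrain line-restrictions, so the neighbour relations of $\Z^2$ must be made detectable along lines.

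With this skeleton in hand, the decoration rules are short to state. I would add to $\cS$ two families of constraints on the $\cW$-component. The first forces it to be constant across all cells belonging to a common represented point, so that any solution induces a well-defined map $\cT\colon\N^2\to\cW$, $s\mapsto\cT(s)$. The second forces, whenever a line exhibits a witnessed tile $[s,s+e_i]$ as in (b), that the corresponding pips obey $(\cT(s),\cT(s+e_i))\in\cR_i$. Since both the ``same point'' and the ``$+e_i$ neighbour'' relations live in bounded line-windows, these are genuine constraints of the form ``the line-restriction lies in $\cS$'', and hence $\cS$ and $\cC$ are produced from $\cR$ by a finite computation, as required by the statement.

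It then remains to verify the equivalence. In the forward direction, given an $\cR$-domino function $\cT$ on $\N^2$, I would take the canonical skeleton solution and set its $\cW$-component at each cell equal to $\cT$ evaluated at the point represented by that cell's scale; the two decoration conditions hold because $\cT$ is a genuine domino function, and the initial condition is met by construction. Conversely, a $(\cS,\cC)$-solution forces the rigid hierarchy, hence a well-defined map $\cT$ from the first condition, and the second condition together with the surjectivity in (a) and the witnessing in (b) shows that $\cT$ is an $\cR$-domino function on $\N^2$. The main obstacle is the design and rigidity analysis of the skeleton in the second step: one must make the $p_1\times p_2$-adic hierarchy rigid enough that the represented-point map is well defined, surjective onto $\N^2$, and has its $+e_1,+e_2$ neighbour relations exposed along individual lines, while still leaving the $\cW$-decoration completely free to realize an \emph{arbitrary} domino function. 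Balancing this rigidity against flexibility, entirely within the line-and-column format of Sudoku rules and initial conditions, is where essentially all of the work of Sections \ref{sec:4} and \ref{sec:5} is concentrated.
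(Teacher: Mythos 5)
Your proposal follows essentially the same route as the paper: a product digit set $(\Z/p_1\Z)^\times\times(\Z/p_2\Z)^\times\times\cW$ whose two skeleton components are governed by the $p$-adic Sudoku rule of Section~\ref{sec:4}, a decoration constrained along non-vertical lines so that each cell's pip is forced to depend only on the represented point $\nu_{p_1,p_2}$, and the neighbour relations $[s,s+e_i]$ witnessed within single lines of slope $p_1^{s_1}p_2^{s_2}$ (Definition~\ref{sudoku-construction} and Proposition~\ref{dom-solv}). The only point of emphasis that differs is the initial condition: in the paper $\cC$ does not anchor anything at the origin of the domino board (Lemma~\ref{dom-equiv} makes anchoring unnecessary) but instead exactly enforces the non-constant-columns hypothesis required by the rigidity theorem (Theorem~\ref{class}(ii)), which is the role your ``seeding'' should be understood to play.
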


\begin{theorem}[Encoding Sudoku puzzles as monotiling problems]\label{sudoku-to-monotile} Given a Sudoku rule $\cS$ and an initial condition $\cC$, one can generate (in finite time) a finite Abelian group $G_0$, a set $E\subset G_0$, and a finite set $F\subset \Z^2\times G_0$, with the property that the $(\cS, \cC)$ Sudoku puzzle is solvable if and only if $F$ tiles $\Z^2 \times E$.
\end{theorem}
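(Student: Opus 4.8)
The plan is to adapt the monotiling encoding machinery of \cite{GT21, GT22}, where a particular Sudoku puzzle was encoded as a translational monotiling, and to push it through for an \emph{arbitrary} Sudoku rule $\cS$ and initial condition $\cC$. The basic mechanism is already visible in Example \ref{simple-tile}: a single ``column-segment'' tile encodes an arbitrary function into a finite alphabet, since the tilings of $\Z^2$ by $\{0\}\times\{0,\dots,k-1\}$ are in bijection with the phase functions $\Z\to\{0,\dots,k-1\}$. I would use enriched versions of this gadget, living in the finite fiber $G_0$, to represent a candidate Sudoku solution $F\colon\mathbb{B}\to\Sigma$, and then impose the Sudoku constraints as further tiling equations on the same tiling set $A$.

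First I would build the group $G_0$ as a direct sum of cyclic blocks: one block carrying the digit set $\Sigma$, one block of order $q$ for tracking the initial-condition data that $\cC$ refers to, a block recording the column index $n\in\{0,\dots,N-1\}$, together with auxiliary ``bookkeeping'' blocks used only to splice several constraints into one tiling equation. With $\Z^2\times G_0$ so arranged, a tiling set $A$, a tile $F$, and a target $E\subset G_0$ can be set up so that the fiber of $A$ over each cell of $\mathbb{B}$ reads off the value $F(\text{cell})\in\Sigma$, and so that conversely every digit function arises from some such $A$ with $A\oplus F=\Z^2\times E$; this ``function-encoding'' step is a direct adaptation of the corresponding construction in \cite{GT22}.

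Next I would express the two families of Sudoku constraints in Definition \ref{sudoku-def} as tiling equations for this same set $A$. The initial condition dovetails with the phase gadget: a $q$-periodic tiling of each column carries exactly the freedom of a relabelling of $\Z/q\Z$, which plays the role of the permutation $\sigma_n$, so that the requirement $(\sigma_n(\pi_q(m)),F(n,m))\in\cC$ becomes the requirement that the allowed local patterns of the gadget are precisely the pairs listed in $\cC$. The Sudoku rule $\cS$ — that the restriction of $F$ to every non-vertical line $\ell_{j,i}$ lies in $\cS$ — would be imposed through the tiling language of \cite{GT22}, which is designed to express constraints that are universally quantified over the translates in $\Z^2$, so that the freedom in the slope $j$ and intercept $i$ is absorbed into the group structure rather than handled one line at a time. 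Finally I would invoke the combination device from \cite{GT21, GT22}, which merges a finite conjunction of tiling equations for a common set $A$ into a single equation $A\oplus F=\Z^2\times E$ after enlarging $G_0$ by a finite indexing factor; this produces the single tile $F$ and target $E$ demanded by the statement, manifestly constructed in finite time from $\cS$ and $\cC$.

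The main obstacle is the faithful, two-directional encoding of the Sudoku rule for a general subset $\cS\subseteq\Sigma^N$ and for \emph{all} non-vertical lines at once: one must show both that any tiling forces every line-restriction of the induced digit function to lie in $\cS$ (soundness), and that any genuine $(\cS,\cC)$-Sudoku solution lifts to an actual tiling (completeness), despite the rule constraining infinitely many lines of unbounded slope with a single finite tile. This is precisely the point where the expressibility framework of \cite{GT22} must be re-examined and generalized from the one specific puzzle treated there to an arbitrary pair $(\cS,\cC)$; realizing the full permutation freedom of $\cC$ (rather than a mere cyclic shift) inside the finite fiber is a second delicate point requiring the right $G_0$-block. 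Once the line rule and this permutation freedom are shown to be expressible, the remaining verifications — soundness and completeness of each local gadget, and the preservation of solvability in both directions under the combination step — are routine adaptations of the corresponding lemmas in \cite{GT21, GT22}.
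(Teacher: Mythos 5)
Your outline reproduces the correct high-level architecture of the paper's argument (encode a candidate digit function as the graph of a tiling set, impose the constraints as a finite system of tiling equations, then merge them into a single equation via \cite[Theorem 1.15]{GT21}), but it leaves unresolved exactly the two steps that constitute the actual content of the proof, and in both cases the mechanism you gesture at is not the one that works. First, for the Sudoku rule: saying that the universal quantification over all non-vertical lines $\ell_{j,i}$ is ``absorbed into the group structure'' is not an argument, and a single fiber over each cell of $\mathbb{B}$ does not give you simultaneous access to the $N$ digits along an arbitrary line of unbounded slope. The paper's device is different: for each column index $n$ one introduces functions on $\Z^2 \times \Z/2\Z$ constrained (by an \emph{expressible} periodicity condition, \lemref{express-period}) to be invariant under translation by $((-n,1),0)$, which forces their value at base point $(i,j)$ to depend only on $jn+i$; thus the tuple of all these functions at the single point $(i,j)$ reads off the entire restriction of the digit function to $\ell_{j,i}$, and one pointwise membership constraint, imposed uniformly over the base $\Z^2$, enforces the rule $\cS$ on every line at once. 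Without this (or an equivalent) idea there is no soundness or completeness to verify.

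Second, the permutation freedom in the initial condition: you correctly observe that a $q$-periodic phase gadget yields only a cyclic shift of $\Z/q\Z$, not an arbitrary permutation $\sigma_n$, and you flag this as ``a second delicate point requiring the right $G_0$-block'' --- but no choice of cyclic block realizes it. The paper instead expresses existence of a permutation by an exact-cover functional equation (\lemref{permutation-express}, equation \eqref{funceq}): the $2q$ tuples read off a $q \times 2$ window must partition the symmetric set $\iota(\Z/q\Z) \uplus -\iota(\Z/q\Z)$, which together with the $q$-periodicity that this equation itself implies is equivalent to the permutation condition. Relatedly, your plan to make ``the allowed local patterns of the gadget precisely the pairs listed in $\cC$'' presupposes that membership in an arbitrary finite set is directly a tiling equation; it is not, and the paper has to pass through antipodally symmetric Boolean encodings ($\pm$-valued alternating functions on a $\Z/2\Z$ factor, with $-\Omega = \Omega$) and existentially quantified auxiliary functions (\lemref{boolean-express}) to make such constraints even \emph{weakly} expressible. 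Since you explicitly defer all of these points and call the remainder routine, the proposal is a roadmap rather than a proof: the gap is precisely the expressibility of the line rule, of the permutation condition, and of arbitrary membership constraints, none of which follow from the gadgets you describe.
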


Theorem \ref{sudoku-to-monotile} will be proven in Section \ref{sec:6} using a variant of the ``tiling language'' developed in \cite{GT22} that was implicit in \cite{GT21}.  The more novel ingredient in the proof of Theorem \ref{thm:main} is Theorem \ref{domino-to-sudoku}, which will be proven over the next two sections, as illustrated in Figures \ref{fig:sec4} and \ref{fig:sec4-5}.  For now, we close the section by observing a simple but useful affine invariance of $\cS$-Sudoku solutions (cf. \cite[Proposition 8.4(i)]{GT22}).

\begin{figure}
    \centering
    \includegraphics[width = .7\textwidth]{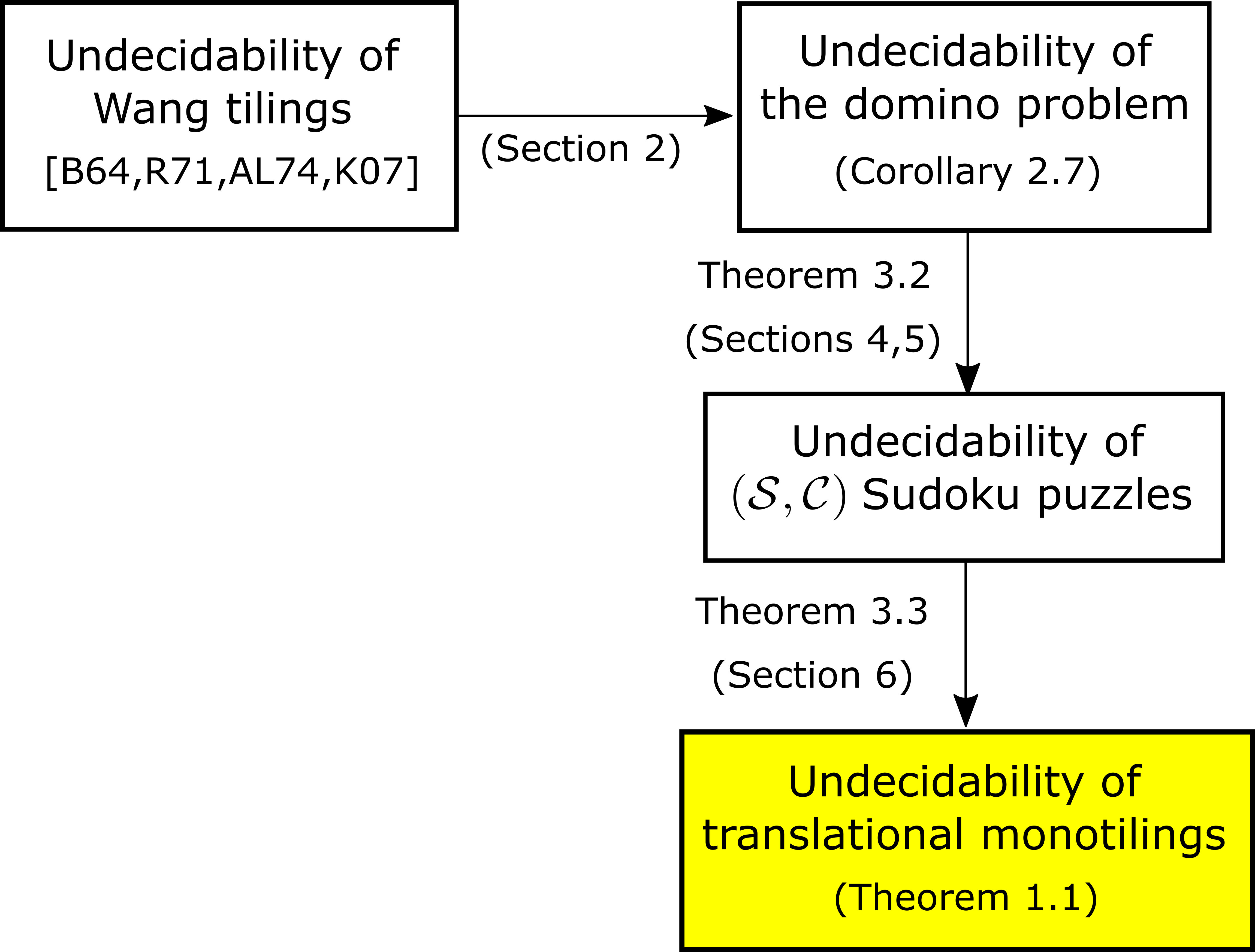}
    \caption{A high-level overview of our proof.}
    \label{fig:logic1}
\end{figure}

\begin{proposition}[Affine invariance]\label{sudoku-affine}  Let $\cS = \cS_{N,\Sigma}$ be a Sudoku rule, and let $F \colon \mathbb{B} \to \Sigma$ be a $\cS$-Sudoku solution.  Then for any integers $a,b,c$, the function $(n,m) \mapsto F(n,an+bm+c)$ is also a $\cS$-Sudoku solution.  In particular:
\begin{itemize}
    \item[(i)] The \emph{reflection} $(n,m) \mapsto F(n,-m)$ is a $\cS$-Sudoku solution.
    \item[(ii)]  For any integers $D,E$, the \emph{shear} $(n,m) \mapsto F(n, m+Dn+E)$ is a $\cS$-Sudoku solution.
    \item[(iii)]  For any prime $p$, the \emph{Tetris move outcome} $(n,m) \mapsto F(n,pm)$ is a $\cS$-Sudoku solution.
\end{itemize}
\end{proposition}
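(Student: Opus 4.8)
The plan is to prove the general affine statement first and then read off the three special cases by specializing the parameters $(a,b,c)$. Write $\tilde F(n,m) \coloneqq F(n, an+bm+c)$; this is well-defined on all of $\mathbb{B}$, since $an+bm+c \in \Z$ whenever $(n,m) \in \mathbb{B}$, so $(n, an+bm+c)$ again lies in $\mathbb{B}$. To verify that $\tilde F$ is a $\cS$-Sudoku solution, I must check that for every non-vertical line $\ell_{j,i}$ (that is, for every integer slope $j$ and every intercept $i$) the function $n \mapsto \tilde F(n, jn+i)$ lies in $\cS$.

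The heart of the argument is a one-line substitution. Composing the affine map with the parameterization of $\ell_{j,i}$ gives
\[
\tilde F(n, jn+i) = F\bigl(n,\, a n + b(jn+i) + c\bigr) = F\bigl(n,\, (a+bj)\,n + (bi+c)\bigr).
\]
Setting $j' \coloneqq a+bj$ and $i' \coloneqq bi+c$ --- both integers, since $a,b,c,i,j$ are integers --- the right-hand side is exactly the function parameterizing the restriction of $F$ to the non-vertical line $\ell_{j',i'}$. Because $F$ is a $\cS$-Sudoku solution, this restriction lies in $\cS$; hence so does $n \mapsto \tilde F(n, jn+i)$. As $(j,i)$ was arbitrary, $\tilde F$ is a $\cS$-Sudoku solution, which is the general claim.

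The three special cases are then immediate specializations: (i) is $(a,b,c) = (0,-1,0)$, (ii) is $(a,b,c) = (D,1,E)$, and (iii) is $(a,b,c) = (0,p,0)$. Note that the primality of $p$ plays no role in (iii); it is recorded only because this is the regime in which the Tetris move will later be applied. The single point that warrants care is that the Sudoku condition is imposed over \emph{all} integer slopes, so that the image line $\ell_{j',i'}$ is always among the lines constrained by $\cS$, no matter which slope $j' = a+bj$ happens to arise; were the rule to constrain only a restricted set of slopes, the substitution would fail to close up. As it stands there is no genuine obstacle, and the verification is purely formal.
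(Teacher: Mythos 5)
Your proof is correct and is exactly the paper's argument: the paper simply notes that the affine transformation $(n,m)\mapsto(n,an+bm+c)$ maps non-vertical lines to non-vertical lines, which is precisely your substitution $j'=a+bj$, $i'=bi+c$ spelled out. The special cases and your remark that primality of $p$ is irrelevant to (iii) are likewise consistent with the paper.
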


\begin{proof} Immediate from the definitions, since the affine transformation $(n,m) \mapsto (n,an+bm+c)$ maps non-vertical lines to non-vertical lines.
\end{proof}

\section{A $p$-adic Sudoku rule}\label{sec:4}

The Sudoku rule $\cS$ used to establish Theorem \ref{domino-to-sudoku} will be rather intricate, consisting of a superposition of two ``$p$-adic'' Sudoku rules which will essentially encode the two coordinates $s_1, s_2$ of the domino board, with an additional ``decoration rule'' needed to encode the domino function itself, and which is coupled with the previous two rules.  In this section we develop the theory of a single $p$-adic Sudoku.  We begin with some $p$-adic notation.

\begin{definition}[$p$-adic structures]  Let $p$ be a prime.  We let $(\Z/p\Z)^\times = (\Z/p\Z) \backslash \{0\}$ denote the invertible elements of $\Z/p\Z$, and let\footnote{In particular, we caution that $\Z_p$ does \emph{not} denote the cyclic group $\Z/p\Z$.} $\Z_p = \varprojlim \Z/p^n \Z$ denote the ring of $p$-adic numbers, with the usual projection homomorphism $\pi_p \colon \Z_p \to \Z/p\Z$.  For any $p$-adic integer $n \in \Z_p$, we define the \emph{$p$-valuation} $\nu_p(n) \in [0,+\infty]$ of $n$ to be the number of times $p$ divides $n$ if $n$ is non-zero, with the convention $\nu_p(0) \coloneqq +\infty$.  As is well known, with the $p$-adic metric $d_p( n, m) \coloneqq p^{-\nu_p(n-m)}$, $\Z_p$ becomes a compact Abelian group containing $\Z$ as a dense subgroup.  We define the ``$p$-adically structured function'' $f_p \colon \Z_p \to (\Z/p\Z)^\times$ by the formula
$$ f_p(n) \coloneqq \pi_p\left(\frac{n}{p^{\nu_p(n)}}\right)$$
for $n \neq 0$ (i.e., $f_p(n)$ is the last non-zero digit in the base $p$ expansion of $n$), with $f_p(0)$ set to an arbitrary value of $(\Z/p\Z)^\times$; for sake of concreteness, we adopt the convention
$$ f_p(0) \coloneqq 1 \pmod{p}.$$
\end{definition}

We observe the multiplicativity and additivity properties
\begin{equation}\label{fp-mult}
 f_p(nm) = f_p(n) f_p(m); \quad \nu_p(nm) = \nu_p(n) + \nu_p(m)
 \end{equation}
for any \emph{non-zero} $n,m \in \Z_p$, as well as the almost periodicity property
\begin{equation}\label{fp-period}
f_p(n+h) = f_p(n); \quad \nu_p(n+h) = \nu_p(n)
\end{equation}
whenever $n,h \in \Z_p$ are such that $\nu_p(h) > \nu_p(n)$.   In particular, $f_p$ and $\nu_p$ are continuous (in fact locally constant) on $\Z_p$ away from the origin $0$, where they both exhibit a singularity.

\begin{remark}
In \cite{GT22} we used a similar function $f_q$ to $f_p$, in which $p$ was replaced by a power of two $q=2^s$, and the domain was restricted to just the integers $\Z$ rather than the $q$-adics $\Z_q$.  
\end{remark}

It will be convenient to introduce some notation for affine forms.

\begin{definition}[Affine forms] Let  $p$ be a prime.
\begin{itemize}
    \item[(i)]  An \emph{affine form of one variable} is a function of the form $$n \mapsto an+b,$$ where $a,b$ are integers and $n$ is an indeterminate (usually restricted in practice to subset of $\Z$).  We say that the affine form is \emph{degenerate} modulo $p$ if the form $\pi_p(an+b)$ vanishes identically, or equivalently if $(\pi_p(a), \pi_p(b)) = (0,0)$, and \emph{non-degenerate} modulo $p$ otherwise.
    \item[(ii)]  An \emph{affine form of two variables} is a function of the form $$(n,m) \mapsto An+Bm+C,$$ where $A,B,C$ are integers and $n,m$ are indeterminates (but usually restricted in practice to subset of $\Z$).  We say that the affine form is \emph{degenerate} modulo $p$ if the form $\pi_p(An+Bm+C)$ vanishes identically, or equivalently if $(\pi_{p}(A), \pi_{p}(B), \pi_{p}(C)) = (0,0,0)$, and \emph{non-degenerate} modulo $p$ otherwise.  The affine form is said to be \emph{vertically non-degenerate} if $\pi_p(B) \neq 0$.
    \item[(iii)]  Two affine forms of one variable $$n \mapsto an+b,\quad n \mapsto a'n+b'$$  are said to \emph{agree modulo $p$} at a given integer $n \in \Z$ if $$\pi_{p}(an+b) = \pi_{p}(a'n+b').$$  They are said to be \emph{identical modulo $p$} if they agree at every integer $n \in \Z$, or equivalently if $a' = a \pmod{p}$ and $b' = b \pmod{p}$.
    \item[(iv)]  Two affine forms of two variables $$(n,m) \mapsto An+Bm+C, \quad (n,m) \mapsto A'n+B'm+C'$$  are said to \emph{agree modulo $p$} at a given pair $(n,m) \in \Z^2$ if $$\pi_{p}(An+Bm+C) = \pi_{p}(A'n+B'm+C').$$  They are said to be \emph{identical modulo $p$} if they agree at every element of $\Z^2$, or equivalently if 
    $$A' = A \pmod{p};\quad B' = B \pmod{p};\quad C' = C \pmod{p}.$$
\end{itemize}
\end{definition}

We record some basic facts about affine forms:

\begin{lemma}[Basic facts about affine forms]\label{basic-affine} Let $p$ be a prime. 
\begin{itemize}
    \item[(i)] (Non-degenerate forms are usually invertible, I)  Let $n \mapsto an+b$ be a non-degenerate  affine form of one variable.  Then $\pi_{p}(an+b)$ is non-zero outside of at most one coset of $p \Z$.  In particular, $\nu_{p}(an+b)=0$ outside at most one coset of $p \Z$.
    \item[(ii)]   (Non-degenerate forms are usually invertible, II) Let $(n,m) \mapsto An + Bm + C$  be a non-degenerate  affine form of two variables.  Then $\pi_{p}(An+Bm+C)$ is non-zero for $(n,m) \in \Z^2$ outside of at most $p$ cosets of $p\Z\times p\Z$. In particular, $\nu_p(An+Bm+C) = 0$ outside of at most $p$ cosets of $p\Z\times p\Z$.
    \item[(iii)]  (Extrapolation, I) Let $n \mapsto an+b$ and $n \mapsto a'n+b'$ be two  affine forms of one  variable that agree modulo $p$ at two integers $n_1, n_2$ with $\pi_p(n_1) \neq \pi_p(n_2)$.  Then the two forms are identical modulo $p$.
    \item[(iv)] (Extrapolation, II)  Suppose that $(n,m) \mapsto An+Bm+C$ and $(n,m) \mapsto A'n+B'm+C'$ are  affine forms of two variables that agree modulo $p$ on more than $p$ cosets of $p\Z\times p\Z$.  Then these affine forms are in fact identical modulo $p$.
\end{itemize}
\end{lemma}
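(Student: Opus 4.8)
The plan is to establish the two ``counting'' statements (i) and (ii) directly by passing to residues modulo $p$, and then to deduce the two ``extrapolation'' statements (iii) and (iv) as immediate consequences by applying (i) and (ii) to the \emph{difference} of the two given forms.

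For (i) I would work in the field $\Z/p\Z$ and split into cases according to whether $\pi_p(a)$ vanishes. If $\pi_p(a) \neq 0$, then $a$ is invertible modulo $p$, so $\pi_p(an+b) = 0$ precisely when $\pi_p(n) = -\pi_p(b)\pi_p(a)^{-1}$, that is, on exactly one coset of $p\Z$; outside that coset $\pi_p(an+b) \neq 0$. If $\pi_p(a) = 0$, then non-degeneracy forces $\pi_p(b) \neq 0$, so $\pi_p(an+b) = \pi_p(b) \neq 0$ for every $n$ and the exceptional coset is empty. In either case the vanishing set lies in at most one coset, and on its complement $\pi_p(an+b) \neq 0$ is equivalent to $p \nmid an+b$, i.e. $\nu_p(an+b) = 0$. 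For (ii) the vanishing locus of $\pi_p(An+Bm+C)$ depends only on $(\pi_p(n),\pi_p(m)) \in (\Z/p\Z)^2$, so it suffices to count the solutions $(\bar n, \bar m)$ of the linear equation $\pi_p(A)\bar n + \pi_p(B)\bar m + \pi_p(C) = 0$ over $\Z/p\Z$, noting that each such solution corresponds to exactly one coset of $p\Z \times p\Z$. If $(\pi_p(A),\pi_p(B)) \neq (0,0)$ this is a non-trivial affine-linear equation over a field, whose solution set is an affine line with exactly $p$ points; if $(\pi_p(A),\pi_p(B)) = (0,0)$, non-degeneracy gives $\pi_p(C) \neq 0$ and there are no solutions. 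Thus the form vanishes on at most $p$ cosets, and again on the complement $\nu_p(An+Bm+C)=0$.

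Statements (iii) and (iv) then follow formally. For (iii) I would consider the difference form $n \mapsto (a-a')n + (b-b')$; the hypothesis says it vanishes modulo $p$ at two integers $n_1, n_2$ lying in distinct cosets of $p\Z$, hence on at least two cosets, so by the contrapositive of (i) it cannot be non-degenerate, giving $(a-a',b-b') \equiv (0,0) \pmod p$, which is exactly the assertion that the two forms are identical modulo $p$. Likewise, for (iv) the difference $(n,m) \mapsto (A-A')n + (B-B')m + (C-C')$ vanishes modulo $p$ on more than $p$ cosets, so by the contrapositive of (ii) it must be degenerate, yielding the identity of the two forms.

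I expect no serious obstacle here, as the statement is genuinely elementary; the only point requiring a little care is the counting in (ii), where one must treat the degenerate-linear-part case $(\pi_p(A),\pi_p(B)) = (0,0)$ separately from the generic line case, and confirm that the correspondence between residues in $(\Z/p\Z)^2$ and cosets of $p\Z \times p\Z$ is exact, so that the bound of $p$ cosets is correct rather than an over-count.
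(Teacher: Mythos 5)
Your proposal is correct and follows essentially the same route as the paper: parts (i) and (ii) are proved by counting the vanishing locus of a not-identically-zero affine form over the field $\Z/p\Z$, and parts (iii) and (iv) are deduced by applying the contrapositive of (i) and (ii) to the difference of the two forms. The paper states this more tersely, but the argument is the same; your extra care with the case $(\pi_p(A),\pi_p(B))=(0,0)$ in (ii) is a harmless elaboration.
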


\begin{proof}  The claim (i) is clear since $\pi_{p}(an+b)$ is an  affine form of one variable on the field $\Z/p\Z$ that does not vanish identically.  Similarly for (ii).  To prove (iv), note that if the two  affine forms were not identical modulo $p$, then their difference $(n,m) \mapsto (A'-A)n +(B'-B)m+(C'-C)$ would be an  affine form that does not vanish identically modulo $p$, and hence would vanish modulo $p$ only on at most $p$ cosets of $p\Z\times p\Z$, a contradiction.  The claim (iii) is established similarly.
\end{proof}

We can now construct the Sudoku puzzle associated to the $p$-adic structure.

\begin{definition}[Constructing a $p$-adic Sudoku puzzle]\label{sudoku-construction-p}  Let $p$ be a prime, and let $N$ be a natural number. We then construct a Sudoku rule $\cS_{p,N}$ with board width $N$ and digit set $(\Z/p\Z)^\times$, to be the collection of all functions $g \colon \{0,\dots,N-1\} \to (\Z/p\Z)^\times$ for which there exists a non-degenerate  affine form $n \mapsto an+b$ in one variable, such that
$$ g(n) =  f_p( an+b ) $$
    for all $n \in \{0,\dots,N-1\}$ with $\nu_p(an+b) \leq 1$.

A $\cS_{p,N}$-Sudoku solution $F \colon \mathbb{B} \to (\Z/p\Z)^\times$ is said to have \emph{non-constant columns} if, for every column $\{n\} \times \Z$ in $\mathbb{B}$, the function $m \mapsto F(n,m)$ is a non-constant function on $\Z$. 
\end{definition}

It turns out that when $p$ is sufficiently large, the $\cS_{p,N}$-Sudoku solutions can be almost completely classified in terms of forms $(n,m) \mapsto An+Bm+C$ that resemble affine forms of two variables, except that the coefficients $A,B,C$ take values in the $p$-adics $\Z_p$ rather than the integers $\Z$.  More precisely:

\begin{theorem}[Near-classification of $\cS_{p,N}$-Sudoku solutions]\label{class}  Let $p$ be a prime obeying the largeness condition
\begin{equation}\label{p-large}
p > 48,
\end{equation}
and let $N$ be a multiple of $p^2$.
\begin{itemize}
    \item[(i)]  If $A,B,C \in \Z_p$ are not all zero, and $F \colon \mathbb{B} \to (\Z/p\Z)^\times$ is a function such that
    \begin{equation}\label{fpc}
    F(n,m) = f_p(An+Bm+C)
    \end{equation}
    holds for all $(n,m) \in \mathbb{B}$, then $F$ is a $\cS_{p,N}$-Sudoku solution.  Furthermore, if $\nu_p(B)=0$, then $F$ has non-constant columns.
    \item[(ii)] Conversely, if  $F$ is a $\cS_{p,N}$-Sudoku solution with non-constant columns, then there exist $A,B,C \in \Z_p$ with $\nu_p(B)=0$, such that \eqref{fpc} holds for all $(n,m) \in \mathbb{B}$ with $An+Bm+C \neq 0$.
\end{itemize}
\end{theorem}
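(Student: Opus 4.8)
This is a direct verification, which I would dispatch first. Fixing a non-vertical line $\ell_{j,i}$, its restriction is $n\mapsto f_p(A'n+C')$ with $A':=A+Bj$ and $C':=Bi+C$ in $\Z_p$. If $A'=C'=0$ the restriction is the constant $f_p(0)=1$, witnessed by the non-degenerate form $n\mapsto 1$; otherwise I set $\mu:=\min(\nu_p(A'),\nu_p(C'))<\infty$, cancel $p^\mu$ via the multiplicativity \eqref{fp-mult} to reduce to a pair $(A'',C'')$ with $(\pi_p(A''),\pi_p(C''))\neq(0,0)$, and pick integers $\alpha\equiv A''$, $\beta\equiv C''\pmod{p^2}$. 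Then $\alpha n+\beta$ is non-degenerate, and since its difference from $A''n+C''$ has $p$-valuation $\geq 2$, the almost periodicity \eqref{fp-period} gives $f_p(\alpha n+\beta)=f_p(A''n+C'')$ at every $n$ with $\nu_p(\alpha n+\beta)\leq 1$; hence the restriction lies in $\cS_{p,N}$ and $F$ is a solution. For the column claim when $\nu_p(B)=0$: on a column the map is $m\mapsto f_p(Bm+D)$ with $D:=An+C$ and $B$ a unit, so off the single residue class $m\equiv -D/B\pmod p$ it equals $\pi_p(B)\pi_p(m)+\pi_p(D)$, which takes $p-1\geq 2$ distinct values.

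\textbf{Part (ii).} The plan is to reconstruct the governing two-variable form one $p$-adic digit at a time, using the self-similarity of the puzzle to pass between scales. First, the leading (mod $p$) step. For each row the restriction lies in $\cS_{p,N}$, so off at most one residue class mod $p$ (Lemma \ref{basic-affine}(i)) it agrees with $\pi_p(a_m n+b_m)$ for integers $a_m,b_m$; because $N$ is a multiple of $p^2$ every residue is sampled, and extrapolation (Lemma \ref{basic-affine}(iii)) makes $(\pi_p(a_m),\pi_p(b_m))$ well-defined. I would then run the same analysis along the diagonals (slope $1$), and if needed a further slope. Each transversal line meets the rows in many valuation-$0$ cells once $p$ is large, and comparing the two affine descriptions of $F$ there, together with the two-variable extrapolation Lemma \ref{basic-affine}(iv), forces all rows to share a single $n$-coefficient $\bar A$ and the intercepts to be affine in $m$. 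This produces $\bar A,\bar B,\bar C\in\Z/p\Z$ with
$$F(n,m)=\pi_p(\bar A n+\bar B m+\bar C)$$
at every valuation-$0$ cell; the non-constant-columns hypothesis forces $\bar B\neq 0$, which is exactly the condition $\nu_p(B)=0$ on the coefficient I will build.

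Next, the lift. With the leading digit fixed, the cells where $\nu_p(\bar A n+\bar B m+\bar C)\geq 1$ form a single line mod $p$. I would straighten it to a coordinate line by a shear and then rescale by $m\mapsto m_0+pm$; by the affine invariance of Proposition \ref{sudoku-affine} (in particular the shear (ii) and the Tetris move (iii)) the result is again a $\cS_{p,N}$-solution, on which $f_p$ has shed its outermost digit via \eqref{fp-mult}. Re-running the mod-$p$ analysis on this rescaled solution returns the next digit of each of $A,B,C$. Iterating and passing to the $p$-adic limit yields $A,B,C\in\Z_p$ with $\nu_p(B)=0$ and, by construction, $F(n,m)=f_p(An+Bm+C)$ for every cell with $An+Bm+C\neq 0$. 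At the at most one cell where the form vanishes the rule imposes no constraint, which is precisely why the statement excludes it.

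\textbf{The main obstacle.} I expect the hardest part to be the bookkeeping of exceptional cells through the multiscale induction: at each scale the rule pins down $F$ only away from a thin set (the valuation-$\geq 2$ points on each line, and the at most $p$ bad cosets of Lemma \ref{basic-affine}(ii)), and one must guarantee that across all the slopes used these exceptional sets never jointly obstruct the extrapolation and the global-affineness patching. This is precisely where the largeness hypothesis \eqref{p-large} is spent: it keeps the union of all exceptional residue classes and cosets a strict minority of each $p\times p$ (respectively $p^2\times p^2$) block, leaving enough valuation-$0$ cells on enough slopes to upgrade the line-by-line affine structure to a single global form, and to carry this consistency intact through every stage of the $p$-adic reconstruction.
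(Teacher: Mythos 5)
Your part (i) is correct and matches the paper's argument. Your part (ii) has the same skeleton as the paper's proof (global mod-$p$ structure first, then a shear plus a Tetris move $m \mapsto pm$ to descend one scale, iterate, and take a $p$-adic limit via compactness of $\Z_p$), but it contains a genuine gap at the heart of the multiscale step: the \emph{cross-scale coherence} of the reconstructed coefficients. After you normalize so that $F(n,m)=\pi_p(\bar B m)$ on the valuation-$0$ cells and re-run the mod-$p$ analysis on the rescaled solution $F(n,pm)$, you obtain some new non-degenerate form $A_1n+B_1m+C_1$ describing $F$ on the valuation-$1$ cells. These two descriptions concern \emph{disjoint} sets of cells, so nothing in the two separate analyses forces $B_1 \equiv \bar B \pmod p$ (nor forces the next-scale form to be vertically non-degenerate at all). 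But a single form $A^{(1)}n+B^{(1)}m+C^{(1)}$ with $F=f_p(A^{(1)}n+B^{(1)}m+C^{(1)})$ at valuation $\leq 1$ pins down $B^{(1)} \bmod p$ twice --- once to $\bar B$ from the valuation-$0$ cells and once to $B_1$ from the valuation-$1$ cells --- so if these disagree the digits simply do not assemble into one $p$-adic form. The paper proves this compatibility (equation \eqref{bob}) by a separate argument studying lines $\ell_{j,i}$ of slope $j$ coprime to $p$, exploiting the fact that Definition \ref{sudoku-construction-p} constrains $F$ on each line at valuation $\leq 1$ (not just $=0$), so a single line's affine form sees both scales at once and glues them. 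Your proposal never invokes this two-scale feature of the rule; "re-running the mod-$p$ analysis returns the next digit" is exactly the assertion that needs proof. The same gap infects the persistence of the non-constant-columns hypothesis under the Tetris move (the paper's Corollary \ref{nonconst-dil}), which you need in order to keep the next-scale form vertically non-degenerate and which the paper only obtains \emph{after} the $r=1$ coherence argument.

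Separately, your first (mod-$p$) step is under-argued relative to what is actually needed. Comparing each row's one-variable form to the diagonals' forms gives, at each good cell, one equation linking a diagonal form to a \emph{different} row form for each $n$; this is not directly an extrapolation situation for Lemma \ref{basic-affine}(iii) or (iv), and the exceptional cosets of the various lines can conspire. The paper instead runs a counting argument to locate a single $4\times 4$ square avoiding the bad sets of all rows, diagonals and antidiagonals through it (this is where the threshold \eqref{p-large} is actually spent, via the bound $48NK/p$), establishes the two-variable affine structure there by an explicit propagation using all three slopes, and only then extends globally via the "four consecutive good cells" extension property and the reflection symmetry of Proposition \ref{sudoku-affine}(i). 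Your closing paragraph correctly senses that exceptional-set bookkeeping is delicate, but mislocates the main difficulty: the counting/good-square step and, above all, the scale-gluing step are the substantive content of Theorems \ref{initial-prop} and \ref{final-prop}, and neither is supplied by your sketch.
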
 

The precise threshold of $48$ in \eqref{p-large} is what arises from our proof of Theorem \ref{class}; it can probably be lowered, but this will not significantly simplify the remainder of our arguments.  Similarly, the requirement that $N$ be a multiple of $p^2$ can be relaxed to $N \geq p^2$ without much difficulty.  It is part (ii) of the theorem which will be important in our application; we will not use part (i) other than to demonstrate that the conclusion of part (ii) is close to optimal.

\begin{figure}
    \centering
    \includegraphics[width = .5\textwidth]{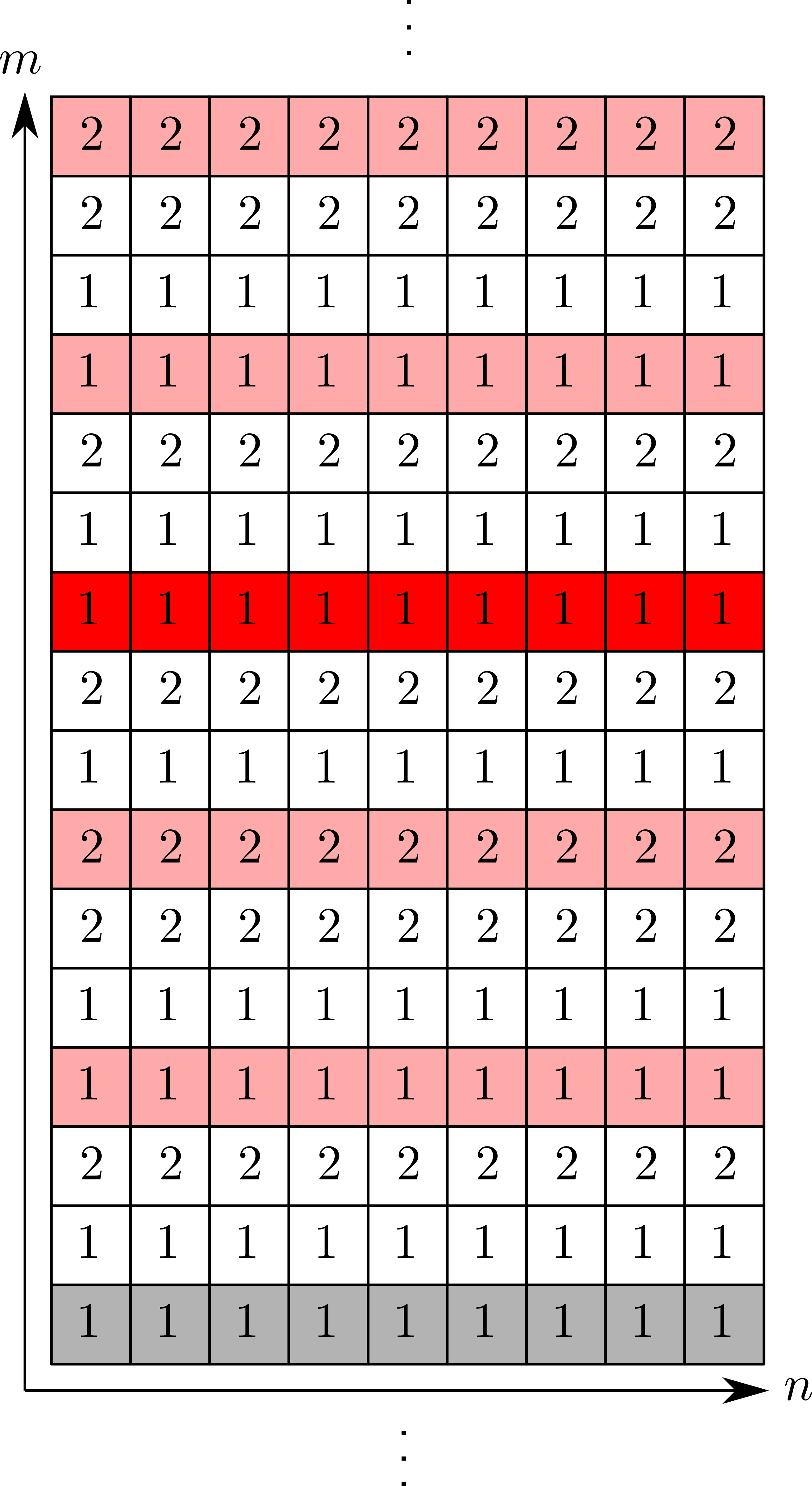}
    \caption{A horizontal slice of the $\cS_{3,9}$-Sudoku solution $F_1(n,m) \coloneqq f_3(m)$.  The colors  gray, white, pink, and red correspond to the cases $\nu_3(m)=+\infty$, $\nu_3(m)=0$, $\nu_3(m)=1$, and $\nu_3(m)=2$ respectively. One could apply affine transformations (such as shear transformations) to this solution to create further $\cS_{3,9}$-Sudoku solutions if desired.}
    \label{fig:p3}
\end{figure}

\begin{figure}
    \centering
    \includegraphics[width = .9\textwidth]{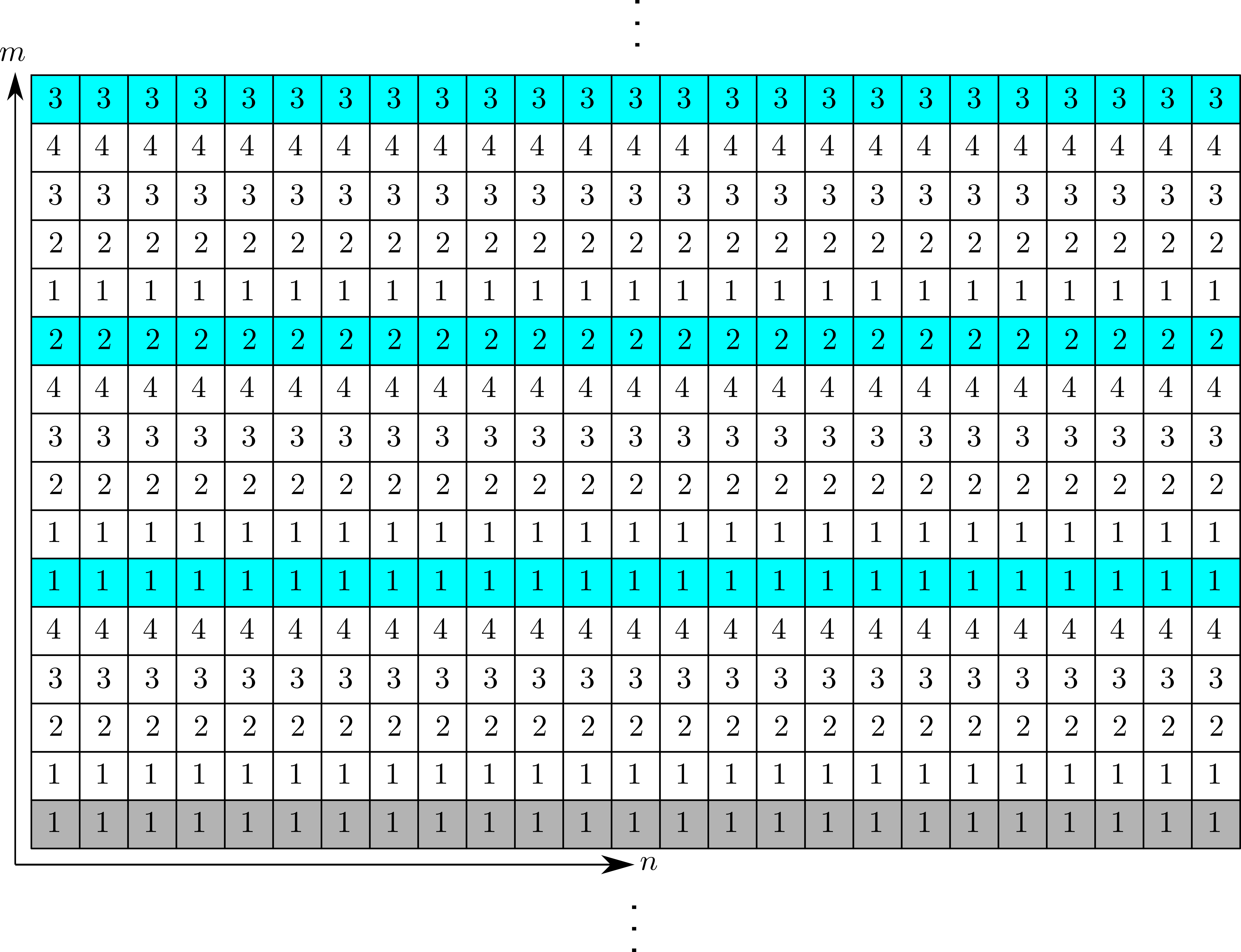}
    \caption{A horizontal slice of the $\cS_{5,25}$-Sudoku solution $F_2(n,m) \coloneqq f_5(m)$.  The colors gray, white and cyan correspond to the cases $\nu_5(m)=+\infty$, $\nu_5(m)=0$ and $\nu_5(m)=1$ respectively.}
    \label{fig:p5}
\end{figure}

The rest of the section is devoted to the proof of this theorem.  We begin with the easy direction (i).  If $A,B,C \in\Z_p$ and $F\colon \mathbb{B} \to (\Z/p\Z)^\times$ are as in (i), and $j, i$ are integers, then we have
$$ F(n,jn+i) = f_p( (A+jB) n + (C+iB) )$$
whenever $n \in \{0,\dots,N-1\}$.  If $A+jB$ and $C+iB$ both vanish, then $F(n,jn+i) = f_p(0) = f_p(1 + pn)$ for all $n \in \{0,\dots,N-1\}$, so $n \mapsto F(n,jn+i)$ lies in $\cS_{p,N}$  If instead $A+jB$ and $C+iB$ do not both vanish, then the quantity $r \coloneqq \min( \nu_p(A+jB), \nu_p(C+iB))$ is finite, and from \eqref{fp-mult} we have
$$ F(n,jn+i) = f_p\left( \frac{A+jB}{p^r} n + \frac{C+iB}{p^r} \right)$$
for all $n \in \{0,\dots,N-1\}$. If we let $a_{j,i}, b_{j,i}$ be integers with $a_{j,i} = \frac{A+jB}{p^r} \pmod{p^2}$ and $b_{j,i} = \frac{C+iB}{p^r} \pmod{p^2}$, we conclude from \eqref{fp-period} that
$$ F(n,jn+i) = f_p( a_{j,i} n + b_{j,i} )$$
whenever $n \in \{0,\dots,N-1\}$ and $\nu_p(a_{j,i} n + b_{j,i}) \leq 1$.  Comparing with Definition \ref{sudoku-construction-p} and Definition \ref{sudoku-def}, we conclude that $F$ is a $\cS_{p,N}$-Sudoku solution.

If $\nu_p(B) = 0$, then $B$ is invertible in $\Z_p$, and thus from \eqref{fp-mult} we see that
$$ F(n,m) = f_p\left(B \right) f_p\left(\frac{A}{B} n + m + \frac{C}{B}\right)$$
for all $(n,m) \in \mathbb{B}$, which easily implies that $F$ has non-constant columns as claimed.  This completes the proof of Theorem \ref{class}(i).

\subsection{Initial structure theorem}

The first main step is to establish the following initial structure theorem on $p$-adic Sudoku solutions, which will take advantage of the largeness condition \eqref{p-large}, and describes the solution outside of an exceptional set of density $1/p$.

\begin{theorem}[Initial structure theorem]\label{initial-prop}  Let $p$ be a prime obeying the largeness condition \eqref{p-large}, and let $N$ be a multiple of $p^2$.  Let $F \colon \mathbb{B} \to (\Z/p\Z)^\times$ be a $\cS_{p,N}$-Sudoku solution.  Then there exists a non-degenerate  affine form $(n,m) \mapsto A^{(0)}n+B^{(0)}m+C^{(0)}$  such that
\begin{equation}\label{fnm}
F(n,m) = \pi_p(A^{(0)}n+B^{(0)}m+C^{(0)})
 \end{equation}
whenever $(n,m) \in \mathbb{B}$ is such that $\nu_p(A^{(0)}n+B^{(0)}m+C^{(0)})=0$. Furthermore, the coefficients $A^{(0)},B^{(0)},C^{(0)}\in\Z$ are uniquely determined modulo $p$, and $F$ has non-constant columns if and only if the affine form is vertically non-degenerate modulo $p$ (i.e., if $\nu_p(B^{(0)})=0$).
\end{theorem}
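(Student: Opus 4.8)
The plan is to analyze $F$ one non-vertical line at a time, extract an affine form for each line, and then glue these one-dimensional data into a single two-dimensional affine form by means of the rigidity principle that a $\Z/p\Z$-valued function whose restriction to every line is affine must itself be affine.

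First I would record the one-dimensional structure. By Definition~\ref{sudoku-construction-p}, for each non-vertical line $\ell_{j,i}$ there is a non-degenerate affine form $n \mapsto a_{j,i} n + b_{j,i}$ with $F(n, jn+i) = f_p(a_{j,i} n + b_{j,i})$ whenever $\nu_p(a_{j,i}n + b_{j,i}) \le 1$; since $f_p = \pi_p$ at valuation $0$, this gives
\[ F(n, jn+i) = \pi_p(a_{j,i} n + b_{j,i}) \]
at every $n$ with $\nu_p(a_{j,i}n+b_{j,i}) = 0$. By \lemref{basic-affine}(i) these ``good'' $n$ comprise all but at most one residue class modulo $p$, so there are $p-1 \ge 47$ good classes, each realized by actual cells because $N$ is a multiple of $p^2$. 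Extrapolation (\lemref{basic-affine}(iii)) then shows that $(a_{j,i}, b_{j,i})$ is uniquely determined modulo $p$ by the restriction $F|_{\ell_{j,i}}$. I would specialize to rows (slope $j=0$), writing $a_m \coloneqq a_{0,m}$ and $b_m \coloneqq b_{0,m}$, so that $F(n,m) = \pi_p(a_m n + b_m)$ at all good $n$ in row $m$. Note only the $\nu_p=0$ part of the rule is used.

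The core step is to show that, modulo $p$, the slope $a_m \equiv A$ is constant and $b_m \equiv Bm+C$ is affine in $m$. Working with the $\Z/p\Z$-valued function $G(n,m)\coloneqq F(n,m)$ on good points, the slope of $G$ along row $m$ is $a_m$, while its slope along any line $\ell_{j,i}$ is a value $s_{j,i}$ that is constant along that line. Writing a slope-$j$ step as one horizontal step followed by $j$ vertical steps, and expressing the vertical differences of $G$ through the row forms, I would compute each $s_{j,i}$ in terms of the coefficients $a_m,b_m$. Imposing that $s_{1,i}$ and $s_{-1,i}$ be constant along diagonals and anti-diagonals forces the differences $a_{m+1}-a_m$ to be constant in $m$; imposing that $s_{2,i}$ be constant along slope-$2$ lines then yields a further relation which, evaluated at $j=2$ and using $p>2$, forces $a_{m+1}-a_m \equiv 0 \pmod p$, and simultaneously pins $b_m$ to be affine in $m$. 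This is exactly the ``affine on every line implies globally affine'' rigidity over $\Z/p\Z$, executed in the presence of the sparse exceptional sets. Lifting $A,B,C$ to integers then gives $F(n,m)=\pi_p(An+Bm+C)$ at every $(n,m)$ with $\nu_p(An+Bm+C)=0$, which is \eqref{fnm}.

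Finally, uniqueness of $(A^{(0)},B^{(0)},C^{(0)})$ modulo $p$ follows from Extrapolation~II (\lemref{basic-affine}(iv)): any two such forms agree via $\pi_p$ on the set where both have valuation $0$, which by \lemref{basic-affine}(ii) omits at most $2p$ of the $\ge p^2$ available cosets of $p\Z\times p\Z$, so they agree on more than $p$ cosets and are thus identical modulo $p$. For the column dichotomy, if $\nu_p(B^{(0)})=0$ then for each fixed $n$ the map $m \mapsto \pi_p(An+Bm+C)$ is non-constant on the good $m$ (all but one class), so every column is non-constant; whereas if $\pi_p(B^{(0)})=0$ then on any column $n$ with $\pi_p(An+C)\ne 0$ one has $\nu_p(An+Bm+C)=0$ for all $m$ and $F(n,m)=\pi_p(An+C)$ is constant, producing a constant column. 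This yields the stated equivalence. The main obstacle throughout is the bookkeeping of exceptional classes: each algebraic identity must be evaluated at points that are simultaneously good for all the lines involved, and guaranteeing the existence of such common good points (and of enough good cosets for the extrapolation steps) is precisely what the largeness hypothesis \eqref{p-large} and the divisibility $p^2 \mid N$ are there to supply.
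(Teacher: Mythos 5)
Your first step (extracting an affine form, uniquely determined modulo $p$, on each non-vertical line) and your treatment of the uniqueness claim and the column dichotomy all match the paper and are fine. The gap is in the core gluing step, and it is genuine rather than bookkeeping that the hypotheses automatically supply. Your plan is to compare the row forms $(a_m,b_m)$ with the forms on diagonals, antidiagonals and slope-$2$ lines by evaluating both at common cells. But a line of slope $j\neq 0$ meets each row in exactly \emph{one} cell, and that single cell may lie in the exceptional (valuation $\geq 1$) set of the line, of the row, or of both. The exceptional set of $\ell_{1,i}$ is a full residue class $\gamma_i$ of $n$ modulo $p$ along that line, and nothing prevents these classes from being positioned so that every diagonal crossing a given row $m$ is exceptional precisely at that crossing (this happens whenever $\gamma_i \equiv m-i \pmod p$ for all $i$). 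In that scenario the slope-consistency identities you want to impose between rows $m$ and $m+1$ hold at no cell whatsoever, and the rigidity computation forcing $a_{m+1}-a_m$ to vanish never gets started for that pair of rows. Averaging shows such obstructions are rare, but your argument needs the comparison to succeed for (a connected chain through) \emph{every} pair of consecutive rows, and you give no mechanism for routing around the rows or diagonals where it fails.

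This is exactly the difficulty the paper's proof is organized around. It only claims the existence of good configurations \emph{somewhere}: a counting argument (\lemref{aff-square}) produces a single $4\times 4$ square meeting no bad set of any row, diagonal or antidiagonal, on which a Sudoku-style propagation pins down one global candidate form $H$. The passage from that one square to all of $\mathbb{B}$ is then achieved by the \emph{extension property}, which is robust for two reasons your sketch lacks: the notion of ``good cell'' is redefined relative to the fixed target form $H$ and declares cells with $\nu_p(A^{(0)}n+B^{(0)}m+C^{(0)})\neq 0$ automatically good, and each propagation step needs only two of four consecutive cells on a line to escape the at most two exceptional residue classes involved, which \eqref{p-large} guarantees. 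To salvage your route you would need both an averaging argument showing that enough slope comparisons survive and a propagation device to cross the rows where they do not --- at which point you would essentially have reconstructed the paper's argument.
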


We now prove this theorem, using a modification of the arguments used to establish \cite[Proposition 9.4]{GT22}.  From Definition \ref{sudoku-def} and Definition \ref{sudoku-construction-p}, and specializing to the case $\nu_p(an+b) = 0$, we see that for every non-vertical line $\ell_{j,i}$, there exists a non-degenerate  affine form $n \mapsto a_{j,i} n + b_{j,i}$, such that
\begin{equation}\label{fjn}
 F(n,jn+i) = \pi_p(a_{j,i} n + b_{j,i})
\end{equation}
for all $(n,jn+i) \in \ell_{j,i}$ with $\nu_p(a_{j,i}n + b_{j,i}) = 0$.  This is consistent with the desired conclusion \eqref{fnm}, but only gives structure on individual lines $\ell_{j,i}$ rather than the entire Sudoku board $\mathbb{B}$.  The main difficulty is then to bootstrap \eqref{fjn} to \eqref{fnm}.

In this section we will only need to use \eqref{fjn} for rows, diagonals, and anti-diagonals, thus the slope $j$ will be restricted to the range $\{-1,0,+1\}$.

We now obtain a partial version of Theorem \ref{initial-prop} in which we can obtain the desired representation \eqref{fnm} on a single square.

\begin{lemma}[Affine structure on a square]\label{aff-square}  With the notation and hypotheses of Theorem \ref{initial-prop}, there exist a square $Q_{n_0,m_0}$ for some $0 \leq n_0 \leq N-4$ and $m_0 \in \Z$ and an  affine form $(n,m) \mapsto A^{(0)}n+B^{(0)}m+C^{(0)}$ that is non-degenerate modulo $p$, such that \eqref{fnm} holds for all $(n,m) \in Q_{n_0,m_0}$.  Furthermore, $\nu_p(A^{(0)}n+B^{(0)}m+C^{(0)})=0$ for all $(n,m) \in Q_{n_0,m_0}$. 
\end{lemma}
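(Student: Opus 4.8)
The plan is to promote the per-line affine data to a genuine two-variable affine form on a single $4\times 4$ square. Recall from \eqref{fjn} that for every non-vertical line $\ell_{j,i}$ there is a non-degenerate one-variable form $n\mapsto a_{j,i}n+b_{j,i}$ with $F(n,jn+i)=\pi_p(a_{j,i}n+b_{j,i})$ at every cell of $\ell_{j,i}$ where $\nu_p(a_{j,i}n+b_{j,i})=0$; by Lemma~\ref{basic-affine}(i) the \emph{exceptional} cells on each such line (where this identity may fail) form at most one residue class modulo $p$, i.e.\ at most a $1/p$-fraction of the line. Throughout I would use only the slopes $j\in\{-1,0,1\}$, i.e.\ rows, diagonals and anti-diagonals. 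Call a cell \emph{bad} if it is exceptional for the row, the diagonal, or the anti-diagonal through it.

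The first step is to locate a square on which no cell is bad. Each cell lies on exactly one row, one diagonal and one anti-diagonal, and is exceptional for each of these at most on one residue class modulo $p$. A counting (first-moment) argument over the admissible positions $(n_0,m_0)$ — using that $N$ is a multiple of $p^2$, so $n_0$ ranges over every residue modulo $p$, and that $m_0$ ranges over all of $\Z$ — bounds the average number of bad cells in $Q_{n_0,m_0}$ by $16\cdot(3/p)=48/p$. The largeness hypothesis \eqref{p-large}, $p>48$, makes this average strictly less than $1$, so some square $Q_{n_0,m_0}$ contains no bad cell at all; on it \eqref{fjn} holds at every cell for its row, diagonal and anti-diagonal, and the relevant one-variable forms all have $\nu_p=0$ there.

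On such a good square I would reconstruct the two-variable form by reading off slopes and then propagating. The bottom row $m=m_0$ gives $F(n,m_0)=\pi_p(a_{0,m_0}n+b_{0,m_0})$ at all four columns, so put $A^{(0)}\coloneqq a_{0,m_0}$; the main diagonal has slope $a_{1,m_0-n_0}$, and since the restriction of any candidate $A^{(0)}n+B^{(0)}m+C^{(0)}$ to this diagonal has slope $A^{(0)}+B^{(0)}$, this forces $B^{(0)}\coloneqq a_{1,m_0-n_0}-A^{(0)}$; finally fix $C^{(0)}$ so the form matches $F$ at the corner $(n_0,m_0)$. With these choices the bottom row and the main diagonal agree with $\pi_p(A^{(0)}n+B^{(0)}m+C^{(0)})$ at all their cells (slope plus one matching point, via the extrapolation Lemma~\ref{basic-affine}(iii), noting that any two of four collinear cells have distinct residues modulo $p$ since $4\le p$). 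The remaining cells are filled by a short finite propagation on the $4\times 4$ grid: whenever a row, diagonal or anti-diagonal already contains two cells at which $F$ equals $\pi_p(A^{(0)}n+B^{(0)}m+C^{(0)})$, Lemma~\ref{basic-affine}(iii) forces that line's form to coincide modulo $p$ with $A^{(0)}n+B^{(0)}m+C^{(0)}$, whence the identity holds at every (good) cell of that line. Starting from the bottom row and main diagonal, a few applications along anti-diagonals and the upper rows cover all sixteen cells, giving \eqref{fnm} on $Q_{n_0,m_0}$.

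The remaining assertions are then immediate: since $F$ takes values in $(\Z/p\Z)^\times$, the identity \eqref{fnm} forces $\pi_p(A^{(0)}n+B^{(0)}m+C^{(0)})\neq 0$, i.e.\ $\nu_p(A^{(0)}n+B^{(0)}m+C^{(0)})=0$, at every cell of the square; this is the last sentence of the lemma, and it also shows the form is not identically zero modulo $p$, hence non-degenerate. \textbf{The main obstacle is the bookkeeping of exceptional cells in the second step.} A single fixed square can a priori contain a distinct bad cell for each of the (roughly eighteen) lines crossing it, which already exceeds its sixteen cells; so one cannot hope to exhibit a good square by inspecting one square in isolation, and it is essential to average over the position $(n_0,m_0)$. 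The threshold $p>48$ is precisely what pushes the averaged count of bad cells below $1$. Once a completely good square is secured, the gluing is an elementary finite propagation.
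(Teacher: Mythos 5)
Your proposal is correct and follows essentially the same route as the paper: a first-moment/counting argument over square positions (with the same $16\cdot 3/p=48/p<1$ threshold) to locate a square free of exceptional cells, construction of the two-variable form $H$ from the bottom row and main diagonal, a Sudoku-type propagation via Lemma~\ref{basic-affine}(iii) along rows, diagonals and anti-diagonals to cover all sixteen cells, and the observation that $F$ taking values in $(\Z/p\Z)^\times$ yields both the non-degeneracy and the $\nu_p=0$ claim. The only cosmetic difference is that you average the number of bad cells per square rather than counting bad squares directly, which is the same estimate.
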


\begin{proof}  
    For each row, diagonal, or anti-diagonal $\ell_{j,i}$, define the \emph{bad set} of that line to be the subset of $\ell_{j,i}$ given by the formula
    $$ \{ (n, jn+i) \in \ell_{j,i}: \nu_p( a_{j,i} n + b_{j,i} ) \neq 0 \}.$$
By the non-degenerate nature of the  affine form $n \mapsto a_{j,i} n + b_{j,i}$, the variable $n$ in such a bad set is restricted to at most one coset of $p\Z$  thanks to Lemma \ref{basic-affine}(i).  In particular, the cardinality of a bad set is at most $N/p$.

Call a square $Q_{n_0,m_0}$ \emph{bad} if it intersects the bad set of at least one row, diagonal, or anti-diagonal, and \emph{good} otherwise.  Intuitively, the sparsity of bad sets should lead to the sparsity of bad squares, so that good squares should exist if $p$ is large enough.  We can justify this intuition rigorously using a counting argument (basically the pigeonhole principle) as follows. Let $K$ be a large natural number, and consider all the squares $Q_{n_0,m_0}$ contained in $\{0,\dots,N-1\} \times \{1,\dots,K\}$; there are $(N-3)(K-3)$ such squares.  Next, consider all the rows, diagonals, and anti-diagonals $\ell$ that intersect $\{0,\dots,N-1\} \times \{1,\dots,K\}$; there are $3K+2(N-1)$ such lines.  As noted previously, each such line has a bad set of cardinality $N/p$.  Each element of such a bad set can make at most $4^2$ squares $Q_{n_0,m_0}$ bad (because there are $4^2$ possible cells in $Q_{n_0,m_0}$ in which that element might occur).  Putting all this together, we see that the total number of bad squares in $\{0,\dots,N-1\} \times \{1,\dots,K\}$ is at most 
    $$4^2 \frac{N}{p} (3K + 2(N-1)) \leq 48 \frac{NK}{p} + O(N^2).$$
    But with the hypothesis \eqref{p-large} (and the fact that $N \geq p^2$), we have
$$ \frac{48}{p} < 1 - \frac{3}{N}$$
and hence for $K$ large enough we have fewer than $(N-3)(K-3)$ bad squares.  Hence there must exist at least one good square $Q_{n_0,m_0}$.

Fix the good square $Q_{n_0,m_0}$.   From \eqref{fjn} and the absence of bad sets in $Q_{n_0,m_0}$, we now have the affine structure
$$ F(n, jn+i) = \pi_{p}( a_{j,i} n + b_{j,i} )$$
whenever $\ell_{j,i}$ is a row, diagonal, or anti-diagonal and $(n,jn+i)$ lies inside $Q_{n_0,m_0}$.  That is to say, $F$ is affine-linear on the restriction of any row, diagonal, or anti-diagonal to $Q_{n_0,m_0}$.  In particular, $F$ is affine on the (restricted) row
\begin{equation}\label{row}
 \ell_0=\{ (n_0+k, m_0): k = 0,1,2,3\}
 \end{equation}
and the (restricted) diagonal
\begin{equation}\label{diagonal}
\ell_1=\{ (n_0+k, m_0+k): k = 0,1,2,3\}.
\end{equation}
We may therefore find an affine function $H(n,m) = A^{(0)}n+B^{(0)}m+C^{(0)}$ for some integers $A^{(0)}, B^{(0)}, C^{(0)}$ such that $\pi_p(H)$ agrees with $F$ on both \eqref{row} and \eqref{diagonal}.  Since $F$ never vanishes, $H$ is non-degenerate modulo $p$.

The difference $F - \pi_p(H) \colon Q_{n_0,m_0} \to \Z/p\Z$ is a function which vanishes on \eqref{row} and \eqref{diagonal}, and which is affine on the restriction of any row, diagonal, or anti-diagonal to $Q_{n_0,m_0}$.  We can now show that $F- \pi_p(H)$ vanishes on the rest of $Q_{n_0,m_0}$ by the following ``Sudoku-type'' argument (followed by Figure \ref{fig:solve}), using the observation from Lemma \ref{basic-affine}(iii) that an affine function taking values in $\Z/p\Z$ that vanishes on at least two points on a line in $Q_{n_0,m_0}$, in fact vanishes identically on that line:
\begin{itemize}
    \item[(a)] The function $F - \pi_p(H)$ is affine on the restricted antidiagonal $$\{ (n_0+2-i,m_0+i): 0 \leq i \leq 2 \},$$ and is already known to vanish on two of the cells of this antidiagonal, namely $(n_0+2,m_0)$ (which lies in \eqref{row}) and $(n_0+1,m_0+1)$ (which lies in \eqref{diagonal}).  Hence it also vanishes on the third cell $(n_0,m_0+2)$.
    \item[(b)] The function $F - \pi_p(H)$ is affine on the restricted row $$\{(n_0+i, m_0+2): 0 \leq i \leq 3 \},$$ and is already known to vanish on two of the cells of this row, namely $(n_0,m_0+2)$ (which was established in (a)) and $(n_0+2,m_0+2)$ (which lies in \eqref{diagonal}.  Hence it also vanishes at the remaining two cells $(n_0+1,m_0+2), (n_0+3,m_0+2)$ of this row.
    \item[(c)]  The function $F - \pi_p(H)$ is affine on the restricted antidiagonal $$\{ (n_0+3-i,m_0+i): 0 \leq i \leq 3 \},$$ and is already known to vanish on two of the cells of this antidiagonal, namely $(n_0+1,m_0+2)$ (which was established in (b)) and $(n_0+3,m_0)$ (which lies in \eqref{row}).  Hence it also vanishes on the remaining two cells $(n_0,m_0+3)$, $(n_0+2,m_0+1)$ of this antidiagonal.
    \item[(d)]  On each restricted row $\{ (n_0+i, m_0+j): 0 \leq i \leq 3 \}$ with $j=1,2,3$, the  function $F - \pi_p(H)$ is affine and is already known to vanish on at least two of the cells, so vanishes identically.  
\end{itemize}
 Thus $F-\pi_p(H)$ vanishes on all of $Q_{n_0,m_0}$, and the claim follows.

\begin{figure}
    \centering
    \includegraphics[width = .7\textwidth]{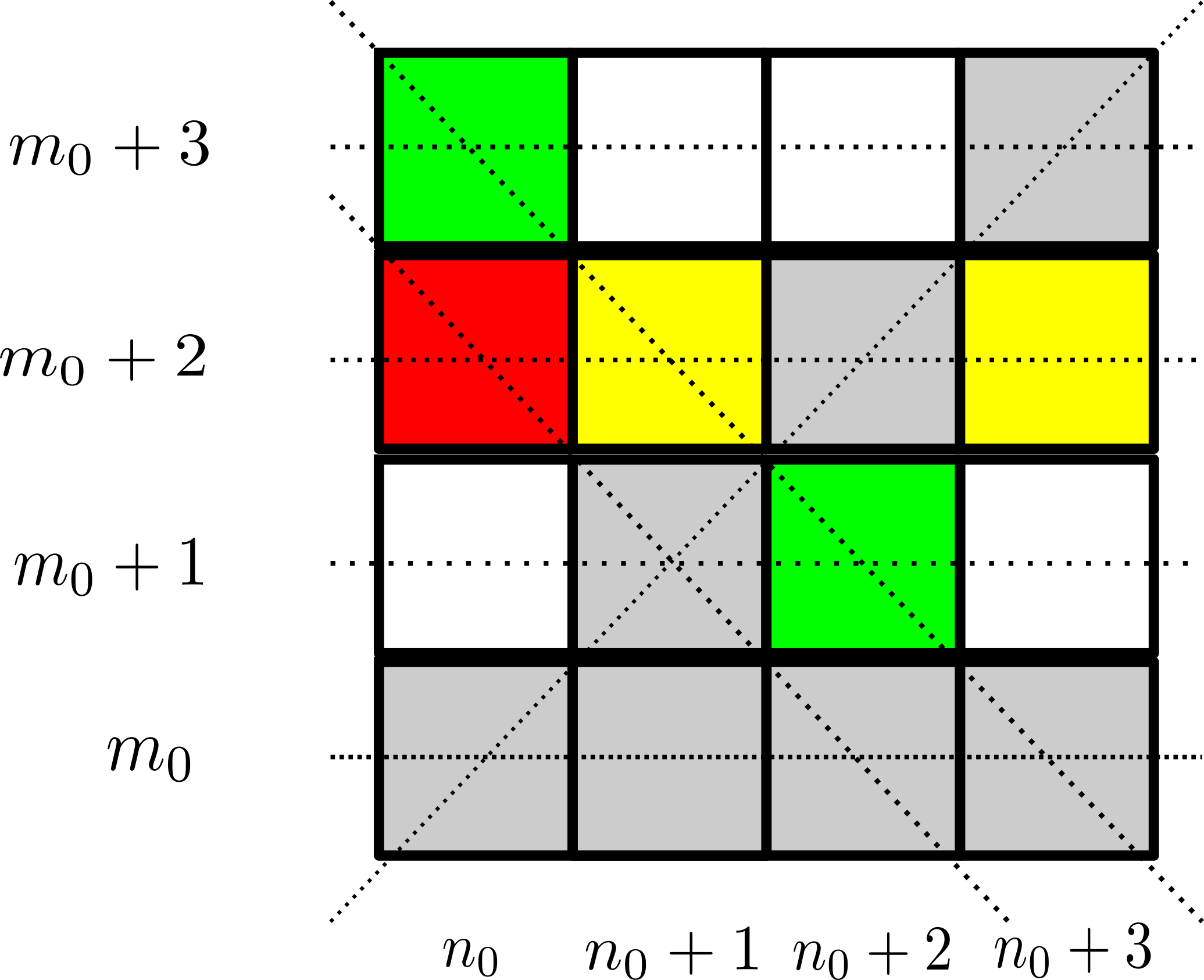}
    \caption{The Sudoku ``puzzle'' on $Q_{n_0,m_0}$ is ``solved'' in the following order.  Firstly, $F$ is known to equal $\pi_p(H)$ on the row \eqref{row} and diagonal \eqref{diagonal} (colored in grey). 
 In (a), using an antidiagonal the identity $F=\pi_p(H)$ extended to the red cell; in (b), using a row the identity is then extended to the yellow cells, and then, in (c), using another antidiagonal it is extended to the green cells, and then finally, in (d), using two rows it is extended to the remaining white cells of $Q_{n_0,m_0}$.}
    \label{fig:solve}
\end{figure}
\end{proof}

Now we can complete the proof of Theorem \ref{initial-prop}.  Let $Q_{n_0,m_0}, A^{(0)}, B^{(0)}, C^{(0)}$ be as in Lemma \ref{aff-square}.  Call a cell $(n,m) \in \mathbb{B}$ \emph{good} if at least one of the following assertions holds:
\begin{itemize}
    \item[(a)]  $\nu_p(A^{(0)}n+B^{(0)}m+C^{(0)}) \neq 0$.
    \item[(b)]  $F(n,m) = \pi_{p}(A^{(0)}n+B^{(0)}m+C^{(0)})$.
\end{itemize}
From Lemma \ref{aff-square} we see that all sixteen cells of $Q_{n_0,m_0}$ are good.

We now make the following key claim, which we call the \emph{extension property}: if $\ell_{j,i}$ is a non-vertical line that contains four consecutive good cells, then in fact all the cells in $\ell_{j,i}$ are good.  We may assume that the  affine form 
$$n \mapsto A^{(0)}n + B^{(0)}(jn+i) + C^{(0)}$$ is non-degenerate modulo $p$, since we automatically have (a) otherwise.  By Lemma \ref{basic-affine}(i) we have $\nu_p(A^{(0)}n + B^{(0)}(jn+i) + C^{(0)})=0$ outside of at most one coset of $p\Z$, which contains at most one of the four consecutive good cells thanks to \eqref{p-large}.  On the other hand, from \eqref{fjn} and Lemma \ref{basic-affine}(i) we also know $F(n,jn+i)$ must also agree with an affine function $\pi_{p}( a_{j,i} n + b_{j,i} )$ for $n \in \{0,\dots,N-1\}$ outside of a (potentially different) coset of $p\Z$, which again can contain at most one of the four consecutive good cells.  Thus the two  affine forms $A^{(0)}n + B^{(0)}(jn+i) + C^{(0)}$ and  $a_{j,i} n + b_{j,i}$ must agree modulo $p$ on at least two of the four consecutive good cells. By Lemma \ref{basic-affine}(iii) (and \eqref{p-large}), this implies that all the cells in $\ell_{j,i}$ are good, as claimed.

We now use the extension property to greatly expand the set of cells that are known to be good (somewhat analogously to how one would solve a Sudoku puzzle).  Firstly, for each of the four rows $\{0,\dots,N-1\} \times \{m_0+i\}$ for $0 \leq i \leq 3$, we already know, by Lemma \ref{aff-square}, that these rows contain four  consecutive good cells.  Thus, by the extension property, all the cells of these rows are good. In other words, the $N \times 4$ block
\begin{equation}\label{block}
 \{0,\dots,N-1\} \times \{m_0+i: 0 \leq i \leq 3\}
 \end{equation}
of four consecutive rows consists entirely of good cells.

We now claim that the next row $\{0,\dots,N-1\} \times \{m_0+4\}$ also consists entirely of good cells.  Indeed, for any $N-4 \leq i \leq N-1$, the diagonal line $\{ (n, n-i+m_0+4): 1 \leq n \leq N \}$ passes through four consecutive good cells of the block \eqref{block}, followed by the cell $(i, m_0+4)$.  Thus, by the extension property, this latter cell $(i,m_0+4)$ is also good.  This establishes four consecutive good cells in the row $\{0,\dots,N-1\} \times \{m_0+4\}$, and so by another appeal to the extension property we conclude that this row also consists entirely of good cells. 

By Proposition \ref{sudoku-affine}(i), the horizontal reflection $(n,m) \mapsto F(n,-m)$ of $F$ is again a $\cS_{p,N}$-Sudoku solution.  Applying this reflection symmetry to the above arguments, we also conclude that all the cells of the row $\{0,\dots,N-1\} \times \{m_0-1\}$ immediately below the block \eqref{block} are also good.  By inductively extending the block \eqref{block} one row at a time in both directions, we thus conclude that the entire Sudoku board ${\mathbb B}$ is good. See an illustration of this extension argument in Figure \ref{fig:extend}.

\begin{figure}
    \centering
    \includegraphics[width = .9\textwidth]{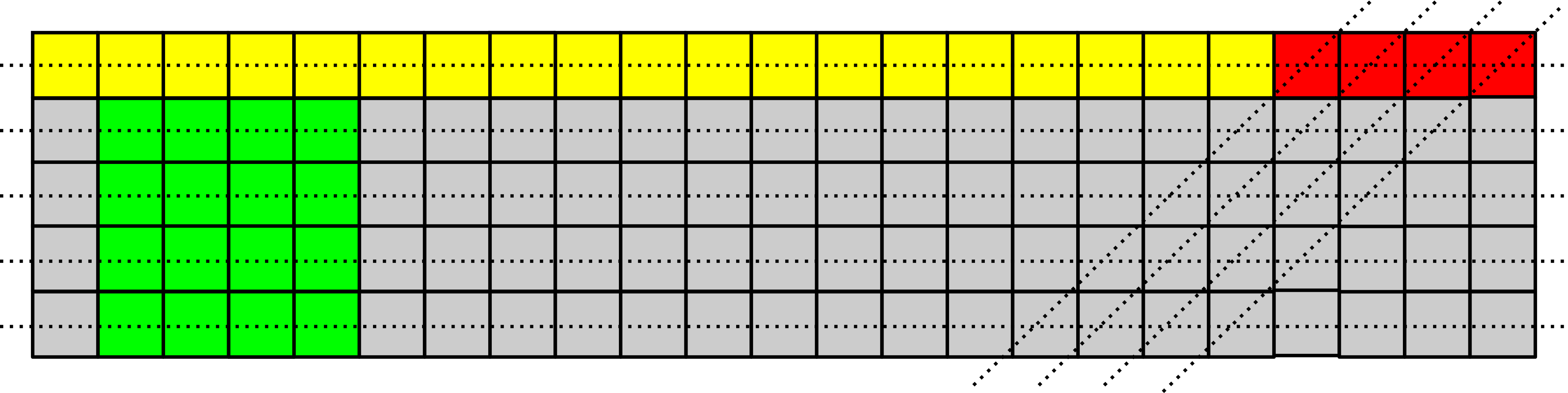}
    \caption{Once one knows that all the cells in the green square are good, one can use the extension property along rows to also establish that the grey cells are good.  Applying the extension property along the four indicated diagonals, one then obtains that the red cells are good, and then by applying the extension property along a row one then obtains that the remaining cells of the row above the original square (yellow) are good.  A reflection of this argument also handles establishes that the row below the square is good; iterating these arguments allows one to show that all the cells on the Sudoku board are good.}
    \label{fig:extend}
\end{figure}

This establishes Theorem \ref{initial-prop} except for the uniqueness claim, which we will prove next.  Suppose that the conclusions of the theorem held for two non-degenerate  affine forms $(n,m) \mapsto A^{(0)}n+B^{(0)}m+C^{(0)}$ and $(n,m) \mapsto \tilde A^{(0)}n+\tilde B^{(0)}m+\tilde C^{(0)}$.  Then by \eqref{fnm}, Lemma \ref{basic-affine}(ii), we see that $A^{(0)}n+B^{(0)}m+C^{(0)}$ and $\tilde A^{(0)}n+\tilde B^{(0)}m+\tilde C^{(0)}$ would agree modulo $p$ outside of at most $2p$ cosets of $p\Z\times p\Z$, and hence must be identical modulo $p$ by Lemma \ref{basic-affine}(iv) (and \eqref{p-large}), giving the uniqueness claim.  Finally, it is clear from \eqref{fnm} that $F$ has non-constant columns if $\nu_p(B^{(0)})=0$, and will have constant columns for all $n$ outside of at most one coset of $p\Z$ if $\nu_p(B^{(0)}) > 0$.  This completes the proof of Theorem \ref{initial-prop}.

\subsection{Intermediate structure theorem}

We now upgrade Theorem \ref{initial-prop} (under the assumption of non-constant columns), reducing the density of the exceptional set on which the solution remains uncontrolled from $1/p$ to $1/p^{r+1}$ for a given $r$.

\begin{theorem}[Intermediate structure theorem]\label{final-prop}  Let $p$ be a prime obeying the largeness condition
\eqref{p-large}, and let $N$ be a multiple of $p^2$.  Let $F \colon \mathbb{B} \to (\Z/p\Z)^\times$ be a $\cS_{p,N}$-Sudoku solution with non-constant columns.  Then for any $r \geq 0$, there exists an affine form $(n,m) \mapsto A^{(r)}n+B^{(r)}m+C^{(r)}$ that is vertically non-degenerate modulo $p$, and such that
\begin{equation}\label{fnm-f}
 F(n,m) = f_p(A^{(r)}n+B^{(r)}m+C^{(r)})
 \end{equation}
whenever $(n,m) \in \mathbb{B}$ is such that $\nu_p(A^{(r)}n+B^{(r)}m+C^{(r)}) \leq r$.
\end{theorem}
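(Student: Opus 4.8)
The plan is to prove \thmref{final-prop} by induction on $r$, the base case $r=0$ being exactly \thmref{initial-prop}: since $f_p(x)=\pi_p(x)$ whenever $\nu_p(x)=0$, the form $(n,m)\mapsto A^{(0)}n+B^{(0)}m+C^{(0)}$ produced there (vertically non-degenerate because $F$ has non-constant columns) already yields \eqref{fnm-f} on the locus $\nu_p(\cdot)=0$. For the inductive step I assume \eqref{fnm-f} at level $r$ for a vertically non-degenerate form $L^{(r)}(n,m):=A^{(r)}n+B^{(r)}m+C^{(r)}$, and I seek a level-$(r+1)$ form of the shape $L^{(r+1)}=L^{(r)}+p^{r+1}W$ with $W$ an affine form of two variables. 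The congruence $L^{(r+1)}\equiv L^{(r)}\pmod{p^{r+1}}$ automatically preserves \eqref{fnm-f} on the region $\nu_p(L^{(r)})\le r$, so the whole problem concentrates on the exceptional locus $\Omega_r:=\{(n,m):\nu_p(L^{(r)}(n,m))\ge r+1\}$.

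First I would normalize. Since $\nu_p(B^{(r)})=0$, the slope $j_0\equiv -A^{(r)}/B^{(r)}\pmod{p^{r+1}}$ is well defined, and applying the shear $(n,m)\mapsto F(n,m+j_0 n)$ from \propref{sudoku-affine}(ii) I may assume $A^{(r)}\equiv 0\pmod{p^{r+1}}$. Then $\Omega_r$ becomes the union of the horizontal residue classes on which $m\equiv m^*\pmod{p^{r+1}}$ for a single $m^*$, and on it $L^{(r)}(n,m^*+p^{r+1}k)=p^{r+1}\,u(n,k)$, where $u(n,k)=\hat A n+B^{(r)}k+\hat C$ is an integral affine form in $(n,k)$ whose $k$-coefficient is exactly $B^{(r)}$. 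The key move is to rescale $\Omega_r$ into a fresh Sudoku board: define $G(n,k):=F(n,m^*+p^{r+1}k)$. A non-vertical line $k=j'n+i'$ of the $(n,k)$-board pulls back to the line of slope $p^{r+1}j'$ in $\mathbb{B}$, so the defining property of a $\cS_{p,N}$-Sudoku solution transfers verbatim and $G$ is itself a $\cS_{p,N}$-Sudoku solution of the same width $N$. Applying \thmref{initial-prop} to $G$ produces a non-degenerate affine form $(n,k)\mapsto\alpha n+\beta k+\gamma$, unique modulo $p$, with $G(n,k)=\pi_p(\alpha n+\beta k+\gamma)$ whenever $\nu_p(\alpha n+\beta k+\gamma)=0$.

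On $\Omega_r$ one computes $L^{(r+1)}=p^{r+1}(u+W|_{\Omega_r})$, so $\nu_p(L^{(r+1)})=r+1+\nu_p(u+W|_{\Omega_r})$ and $f_p(L^{(r+1)})=\pi_p(u+W|_{\Omega_r})$ on the set where this valuation is $r+1$. Hence the level-$(r+1)$ conclusion on $\Omega_r$ is precisely the identity $W|_{\Omega_r}\equiv(\alpha n+\beta k+\gamma)-u\pmod p$, which (using $F=G$ on $\Omega_r$) reads off the coefficients and thereby \emph{defines} $W$, hence $L^{(r+1)}$. The vertical non-degeneracy of $L^{(r+1)}$ then follows as soon as $\beta\not\equiv 0\pmod p$.

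The main obstacle is a compatibility constraint hidden in this last step. Because $m=m^*+p^{r+1}k$, any affine $W(n,m)=W_1 n+W_2 m+W_3$ restricts on $\Omega_r$ to $W_1 n+(W_2 m^*+W_3)\pmod p$: the $k$-dependence is \emph{killed} modulo $p$. Thus $W|_{\Omega_r}$ can match $(\alpha n+\beta k+\gamma)-u$ modulo $p$ only if the $k$-coefficients agree, i.e.\ only if $\beta\equiv B^{(r)}\pmod p$. This equality is exactly what must be proven, and it is the crux: it asserts that the rescaled structure of $G$ is genuinely compatible with the level-$r$ form. I would establish it by probing with lines of $\mathbb{B}$ that cross transversally between $\Omega_r$ and its complement; on the off-$\Omega_r$ portion of such a line the Sudoku affine form is pinned, via \lemref{basic-affine}(i),(iii) together with the known identity $F=f_p(L^{(r)})$, to agree modulo $p$ with the restriction of $L^{(r)}$, whereas its value at the $\Omega_r$ cell it meets is governed by $G$ and hence by $(\alpha,\beta,\gamma)$; comparing the vertical rates of change and invoking the uniqueness clause of \thmref{initial-prop} forces $\beta\equiv B^{(r)}\pmod p$. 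Once this is in hand, $\beta\not\equiv 0$ gives vertical non-degeneracy, $W$ exists as an honest affine form of two variables, and undoing the shear (which preserves the $m$-coefficient, hence vertical non-degeneracy) produces the desired $L^{(r+1)}$, completing the induction.
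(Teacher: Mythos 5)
Your reduction of the inductive step to the single congruence $\beta \equiv B^{(r)} \pmod p$ is well organized: the base case, the shear normalization, the observation that $G(n,k)=F(n,m^*+p^{r+1}k)$ is again a $\cS_{p,N}$-Sudoku solution, and the bookkeeping showing that $\beta\equiv B^{(r)}$ would close the induction are all sound. But the proof of that congruence --- which is the entire content of the theorem beyond Theorem \ref{initial-prop} --- is only sketched, and the sketch as written fails for $r\geq 1$. The rule $\cS_{p,N}$ pins down $F$ along a line $\ell_{j,i}$ only at cells where the line's own affine form $a_{j,i}n+b_{j,i}$ has $p$-valuation at most $1$ (Definition \ref{sudoku-construction-p}). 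For a line crossing ``transversally'' between $\Omega_r$ and its complement, say of slope coprime to $p$, the known identity $F=f_p(L^{(r)})$ on $\{\nu_p(L^{(r)})\le r\}$ pins $a_{j,i}n+b_{j,i}$ to agree with $B^{(r)}(jn+i)$ not just modulo $p$ but modulo $p^2$ (one compares first at valuation $0$ and then again at valuation $1$, both of which still lie inside the region $\nu_p\le r$ when $r\ge 1$). Consequently $\nu_p(a_{j,i}n+b_{j,i})\ge 2$ at every cell of $\Omega_r$ that such a line meets, the Sudoku rule carries no information there, and there is nothing to compare with $(\alpha,\beta,\gamma)$; neither ``vertical rates of change'' nor the uniqueness clause of Theorem \ref{initial-prop} can be brought to bear.

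The only lines that transmit information into $\Omega_r$ are those with $\min(\nu_p(j),\nu_p(i))$ equal to $r$ exactly: for these the valuation-$0$ locus of $a_{j,i}n+b_{j,i}$ sits at level $r$ of $F$ (covered by the inductive hypothesis) while the valuation-$1$ locus sits at level $r+1$ (inside $\Omega_r$), so the two-level depth of the rule couples the two regimes. Exploiting this requires a change of variables and a comparison of two one-variable affine forms over a full arithmetic progression, so that Lemma \ref{basic-affine}(iii) applies --- matching at the single $\Omega_r$ cell ``the line meets'' is not enough to force identity of forms. This is precisely the computation the paper performs in its $r=1$ case; at higher levels the paper avoids repeating it by rescaling only by $p$ at each step and applying the induction hypothesis to $F(n,pm)$ (via Corollary \ref{nonconst-dil}), which makes the two affine forms overlap on a positive-density set where Lemma \ref{basic-affine}(iv) applies directly. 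Your all-at-once rescale by $p^{r+1}$ destroys that overlap and forces the delicate line argument at every level; supplying it with the correct class of lines essentially reconstructs the paper's step-by-step induction. A secondary gap: you never verify that $G$ has non-constant columns (or otherwise exclude $\beta\equiv 0 \pmod p$), and doing so a priori would again require the level-$(r+1)$ structure you are trying to establish.
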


We prove this theorem by induction on $r$.  The case $r=0$ is immediate from Theorem \ref{initial-prop}.  We now turn to the $r=1$ case, which requires a special argument.

Let $F$ be a $\cS_{p,N}$-Sudoku solution with non-constant columns.   By Theorem \ref{initial-prop} and the non-constant columns hypothesis, we can find an  affine form $(n,m) \mapsto A^{(0)} n + B^{(0)} m + C^{(0)}$ with $\nu_p(B^{(0)})=0$, such that
$$ F(n,m) = \pi_p(A^{(0)}n+B^{(0)}m+C^{(0)})$$
whenever $(n,m) \in \mathbb{B}$ with $\nu_p(A^{(0)}n+B^{(0)}m+C^{(0)}) = 0$.

We can eliminate $A^{(0)}, C^{(0)}$ by the following device.  As $B^{(0)}$ is invertible in $\Z/p\Z$ (and hence in $\Z_p$), we can find integers $D, E$ such that the  affine forms $(n,m) \mapsto A^{(0)}n+B^{(0)}m+C^{(0)}$ and $(n,m) \mapsto B^{(0)} ( m - Dn - E)$ are identical modulo $p$.  From \eqref{fp-mult}, \eqref{fp-period} we then have
$$ F(n,m) = \pi_{p}(B^{(0)}(m-Dn-E))$$
whenever $(n,m) \in \mathbb{B}$ with $\nu_p( m-Dn-E ) = 0$.  By replacing $F$ with the sheared function $(n,m) \mapsto F(n,m+Dn+E)$ (which remains a ${\cS_{p,N}}$-Sudoku solution thanks to Proposition \ref{sudoku-affine}(ii), and also continues to have non-constant columns), we may assume without loss of generality that $D=E=0$, thus we have
\begin{equation}\label{fnm-00}
 F(n,m) = \pi_p(B^{(0)} m)
\end{equation}
whenever $(n,m) \in \mathbb{B}$ with $\nu_p( m ) = 0$.  With a similar argument it is also possible to normalize $B^{(0)}=1$, but we will not do so here as it does not provide significant simplification to the arguments below.

To establish the $r\geq 1$ cases of Theorem \ref{final-prop}, we need understand the structure of $F(n,m)$ when $m$ is divisible by $p$.  We apply a ``Tetris move'' by considering the function $(n,m) \mapsto F(n,pm)$, which is also a $\cS_{p,N}$-Sudoku solution thanks to Proposition \ref{sudoku-affine}(iii).  Thus by another appeal to Theorem \ref{initial-prop}, we may find an  affine form $(n,m) \mapsto A_1 n + B_1 m + C_1$ that is non-degenerate modulo $p$, such that
\begin{equation}\label{fnm-10}
 F(n,p m) = \pi_p(A_1n+B_1m+C_1)
\end{equation}
whenever $(n,m) \in \mathbb{B}$ with $\nu_p( A_1n+B_1m+C_1 ) = 0$.

To proceed further we will need to establish some compatibility conditions between the  affine forms $(n,m) \mapsto B^{(0)} m$ and $(n,m) \mapsto A_1 n + B_1 m + C_1$.  To do this, we will study the Sudoku on various non-vertical lines $\ell_{j,i}$ whose slope $j$ is positive but lies outside of $p\Z$ (so that $\nu_p(j)=0$). By Definition \ref{sudoku-def} and Definition \ref{sudoku-construction-p}, we see that for any such line $\ell_{j,i}$, there exists an  affine form $n \mapsto a_{j,i} n + b_{j,i}$ that is non-degenerate modulo $p$  such that
\begin{equation}\label{f-on-line}
F(n,jn+i) = f_p( a_{j,i} n + b_{j,i} )
\end{equation}
whenever $n \in \{0,\dots,N-1\}$ is such that $ \nu_p( a_{j,i} n + b_{j,i} ) \leq 1$.  In particular, we have
$$ F(n,jn+i) = \pi_p( a_{j,i} n + b_{j,i} )$$
whenever $n \in \{0,\dots,N-1\}$ is such that $\nu_p(a_{j,i} n + b_{j,i}) = 0$.  Comparing this with \eqref{fnm-10}, we conclude that $B^{(0)}(jn+i)$ and $a_{j,i} n+b_{j,i}$ agree modulo $p$ in $n \in \{0,\dots,N-1\}$ outside of at most two cosets of $p\Z$, and thus are identical modulo $p$ thanks to Lemma \ref{basic-affine}(iii) (and \eqref{p-large}).  Thus we have
\begin{equation}\label{aji}
 a_{j,i} = B^{(0)} j \pmod{p}
 \end{equation}
and
\begin{equation}\label{bji}
b_{j,i} = B^{(0)} i \pmod{p}
\end{equation}
In particular, $a_{j,i}$ is coprime to $p$.

We now specialize \eqref{f-on-line} to the case $\nu_{p}(a_{j,i} n + b_{j,i}) = 1$, obtaining
\begin{equation}\label{f-on-line-3}
F(n,jn+i) = \pi_p\left( \frac{a_{j,i} n + b_{j,i}}{p} \right)
\end{equation}
for all $n \in \{0,\dots,N-1\}$ with $\nu_{p}(a_{j,i} n + b_{j,i}) = 1$.   Meanwhile, from \eqref{fnm-10} we have
\begin{equation}\label{f-on-line-4}
 F(n,jn+i) = \pi_p\left(A_1n+B_1\frac{jn+i}{p}+C_1\right)
\end{equation}
whenever $n \in \{0,\dots,N-1\}$ is such that $p$ divides $jn+i$, and $\nu_p(A_1n+B_1 \frac{jn+i}{p}+C_1)=0$.

To compare \eqref{f-on-line-3} with \eqref{f-on-line-4} it is convenient to make the change of variables
$$ n = \frac{p m - i}{j}$$ 
where $m$ now ranges in the set
$$ P \coloneqq \left\{ m \in \Z: \frac{i}{p} \leq m \leq \frac{j(N-1)+i}{p}; p m = i \pmod{j} \right\},$$
which is an arithmetic progression of spacing $j$ and length $N/p$.  Then we may write
$$ a_{j,i} n + b_{j,i} = p( a_{j,i} m + b'_{j,i} ) / j$$
where
$$ b'_{j,i} \coloneqq \frac{b_{j,i} j - a_{j,i} i}{p}$$
(which is an integer thanks to \eqref{aji}, \eqref{bji}).  From \eqref{f-on-line-3} we thus have
\begin{equation}\label{f-on-line-3a}
F(\frac{pm-i}{j},pm) = \pi_p( ( a_{j,i} m + b'_{j,i} ) / j )
\end{equation}
for all $m \in P$ with $\nu_p( a_{j,i} m + b'_{j,i} ) = 0$, while from \eqref{f-on-line-4} we have
\begin{equation}\label{f-on-line-4a}
F(\frac{pm-i}{j},pm) = \pi_p( (A_1 (pm-i) + B_1 jm + C_1 j)/j )
\end{equation}
for all $m \in P$ with $\nu_p(A_1 (pm-i) + B_1 jm + C_1 j ) = 0$.

We would like the  affine form 
\begin{equation}\label{maf}
m \mapsto A_1 (p m - i) + B_1 j m + C_1 j
\end{equation}
to be non-degenerate modulo $p$.  Extracting coefficients, we see that this is the case unless
\begin{equation}\label{aoi}
 B_1 = -A_1 i + C_1 j = 0 \pmod{p}.
\end{equation}
But as $(n,m) \mapsto A_1 n + B_1 m + C_1$ is non-degenerate mod $p$, we see from Lemma \ref{basic-affine}(ii) that \eqref{aoi} can only occur for $(j,i)$ in at most $p$ cosets of $p\Z\times p\Z$.  Thus we see for all $(j,i) \in \N \times \Z$ outside of at most $2p$ cosets of $p\Z\times p\Z$, $j$ is coprime to $p$ and that the  affine form \eqref{maf} is non-degenerate mod $p$.

Assuming the non-degeneracy of \eqref{maf}, we now see from \eqref{f-on-line-3a} and \eqref{f-on-line-4a} that one has the two  affine forms $$m \mapsto a_{j,i} m + b'_{j,i},\quad m \mapsto A_1 (p m - i) + B_1 j m + C_1 j$$ agree modulo $p$ outside of at most two cosets of $p\Z$, and thus are identical mod $p$ thanks to Lemma \ref{basic-affine}(iii) (and \eqref{p-large}) to conclude in particular (using \eqref{aji}) that
\begin{equation}\label{bob}
 B_1 = a_{j,i}/j = B^{(0)}  \pmod{p}.
 \end{equation}
If we now set
$$ A^{(1)} \coloneqq p A_1; \quad B^{(1)} \coloneqq B_1; \quad C^{(1)} \coloneqq p C_1$$
then $B^{(1)}$ is coprime to $p$, so the  affine form $(n,m) \mapsto A^{(1)}n+B^{(1)}m+C^{(1)}$ is vertically non-degenerate modulo $p$.  If $(n,m) \in \mathbb{B}$ is such that $\nu_p(A^{(1)}n+B^{(1)}m+C^{(1)})=0$ then $\nu_p(m)=0$, and then by \eqref{fnm-00} and \eqref{bob} we have
$$ F(n,m) = \pi_p( B^{(0)} m ) = \pi_p(B^{(1)} m) = f_p(A^{(1)}n+B^{(1)}m+C^{(1)}).$$
If instead $(n,m) \in \mathbb{B}$ is such that $\nu_p(A^{(1)}n+B^{(1)}m+C^{(1)})=1$, then 
$m=pm'$ for some integer $m'$, and $A^{(1)}n+B^{(1)}m+C^{(1)} = p(A_1 n + B_1 m' + C_1)$, thus $\nu_p(A_1 n + B_1 m' + C_1)=0$.  From \eqref{fnm-10} we then have
$$    F(n,m) = \pi_p(A_1n+B_1m+C_1) = f_p(A^{(1)}n + B^{(1)}m + C^{(1)}).$$
This establishes the $r=1$ case of Theorem \ref{final-prop}.

We record a useful consequence of this $r=1$ case:

\begin{corollary}\label{nonconst-dil}  Let $p$ be a prime obeying the largeness condition
\eqref{p-large}, and let $N$ be a multiple of $p^2$.  If $F \colon \mathbb{B} \to (\Z/p\Z)^\times$ is a $\cS_{p,N}$-Sudoku solution with non-constant columns, then $(n,m) \mapsto F(n,pm)$ is also a 
$\cS_{p,N}$-Sudoku solution with non-constant columns.
\end{corollary}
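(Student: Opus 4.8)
The plan is to recognize that this statement is an immediate repackaging of the intermediate facts established in the course of proving the $r=1$ case of Theorem \ref{final-prop}, together with the affine invariance recorded in Proposition \ref{sudoku-affine}. First I would set $G(n,m) \coloneqq F(n,pm)$ and note that $G$ is a $\cS_{p,N}$-Sudoku solution by Proposition \ref{sudoku-affine}(iii) (the Tetris move). It then remains only to verify that $G$ has non-constant columns.

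The single point where care is needed is the following. One is tempted to substitute $m \mapsto pm$ directly into the $r=1$ representation $F = f_p(A^{(1)}n + B^{(1)}m + C^{(1)})$; but since $A^{(1)} = pA_1$ and $C^{(1)} = pC_1$ both carry a factor of $p$, this substitution produces $f_p(p(A_1 n + B_1 m + C_1)) = f_p(A_1 n + B_1 m + C_1)$, i.e.\ it returns the ``Tetris-move'' form $A_1 n + B_1 m + C_1$ of \eqref{fnm-10}, which is \emph{vertically non-degenerate}, rather than the vertically degenerate form one would naively read off. This is precisely what makes the corollary true, so I would avoid the substitution and instead invoke \eqref{fnm-10} directly: $G(n,m) = \pi_p(A_1 n + B_1 m + C_1)$ whenever $\nu_p(A_1 n + B_1 m + C_1) = 0$, where by \eqref{bob} the coefficient $B_1$ is congruent to $B^{(0)}$ modulo $p$ and hence coprime to $p$.

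To see that each column $\{n_0\}\times\Z$ of $G$ is non-constant, I would fix $n_0$ and examine the one-variable form $m \mapsto A_1 n_0 + B_1 m + C_1$: since its leading coefficient $B_1$ is invertible modulo $p$, the reduction $m \mapsto \pi_p(A_1 n_0 + B_1 m + C_1)$ is a bijection of $\Z/p\Z$, taking the value $0$ exactly once and each nonzero value exactly once. Using $p > 48$ one may therefore pick two integers $m, m'$ whose reductions are distinct and nonzero; for these $\nu_p(A_1 n_0 + B_1 m + C_1) = \nu_p(A_1 n_0 + B_1 m' + C_1) = 0$, so \eqref{fnm-10} applies and gives $G(n_0,m) \neq G(n_0,m')$. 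As $n_0$ was arbitrary, every column of $G$ is non-constant. Alternatively, one could phrase this last step through Theorem \ref{initial-prop}: the representation of $G$ supplied by \eqref{fnm-10} exhibits a non-degenerate governing affine form $A_1 n + B_1 m + C_1$, which by the uniqueness clause of that theorem must be the governing form of $G$, and since $\nu_p(B_1)=0$ the final clause of Theorem \ref{initial-prop} yields non-constant columns. I do not expect any genuine obstacle here: the content is entirely contained in \eqref{fnm-10} and \eqref{bob}, and the only thing to watch is not to lose the factor of $p$ when relating the $r=1$ form to the Tetris-move form.
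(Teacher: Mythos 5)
Your route parallels the paper's own (terse) proof, but there is a genuine gap at exactly the step you single out as the crux, namely the assertion that the coefficient $B_1$ in \eqref{fnm-10} is coprime to $p$ ``by \eqref{bob}''. The identities \eqref{fnm-10} and \eqref{bob} are established in Section \ref{sec:4} only \emph{after} $F$ has been replaced by the sheared function $(n,m)\mapsto F(n,m+Dn+E)$, i.e.\ under the normalization \eqref{fnm-00} in which $A^{(0)}$ and $C^{(0)}$ have been set to zero. That reduction is harmless for Theorem \ref{final-prop}, whose conclusion is shear-invariant, but it is not harmless here: the shear does not commute with the dilation $(n,m)\mapsto(n,pm)$ (non-constancy of $m\mapsto F(n,pm+Dn+E)$ concerns the restriction of a column of $F$ to the progression $p\Z+Dn+E$, not to $p\Z$), so ``the Tetris move has non-constant columns'' is not a shear-invariant property and you cannot quote the post-shear facts for the original $F$. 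Concretely, take $F(n,m)\coloneqq f_p(n+m)$, which by Theorem \ref{class}(i) is a $\cS_{p,N}$-Sudoku solution with non-constant columns. Then $F(n,pm)=\pi_p(n)$ for every $n$ coprime to $p$ and every $m$, so the form furnished by Theorem \ref{initial-prop} for $(n,m)\mapsto F(n,pm)$ is $(A_1,B_1,C_1)=(1,0,0)$ modulo $p$: it is non-degenerate but \emph{not} vertically non-degenerate, \eqref{bob} fails, and those columns are genuinely constant. The same example shows that your observation that $A^{(1)}=pA_1$ and $C^{(1)}=pC_1$ ``carry a factor of $p$'' is also an artifact of the normalization (here the $r=1$ form is $n+m$, with $A^{(1)}\equiv 1$), and that your fallback via the uniqueness clause of Theorem \ref{initial-prop} fails for the same reason: the unique governing form of $F(n,p\cdot)$ need not have $\nu_p(B_1)=0$.

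In fairness, the paper's own proof of Corollary \ref{nonconst-dil} (``From this it is easy to see\dots'') has the same difficulty for an unnormalized $F$: if $\nu_p(A^{(1)}n+C^{(1)})=0$ for some $n$, then $f_p(A^{(1)}n+pB^{(1)}m+C^{(1)})=\pi_p(A^{(1)}n+C^{(1)})$ is constant in $m$. The corollary is only ever invoked in the induction step of Theorem \ref{final-prop} after $F$ has been sheared so that $F(n,m)=f_p(B^{(r)}m)$ for $\nu_p(m)\leq r$ with $r\geq 1$, and for such $F$ the conclusion is immediate since $F(n,pm)=\pi_p(B^{(r)}m)$ for $\nu_p(m)=0$. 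A correct write-up therefore has to build that normalization into the statement or perform the shear \emph{before} dilating and track how the conclusion transforms back; as it stands, the inference ``$B_1$ coprime to $p$ by \eqref{bob}'' is not available for a general $F$ with non-constant columns, and no argument can close this gap without an additional hypothesis.
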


\begin{proof} 
The fact that  $(n,m) \mapsto F(n,pm)$ is a $\cS_{p,N}$-Sudoku solution already follows from Proposition \ref{sudoku-affine}(iii).  By the $r=1$ case of Theorem \ref{final-prop}, we can find an  affine form $(n,m) \mapsto A^{(1)}n+B^{(1)}m+C^{(1)}$ with $B^{(1)}$ coprime to $p$ such that
$$ F(n,m) = f_p(A^{(1)}n+B^{(1)}m+C^{(1)}) $$
for all $(n,m) \in \mathbb{B}$ with $A^{(1)}n+B^{(1)}m+C^{(1)}$ not divisible by $p^2$.  From this it is easy to see that the function $m \mapsto F(n,pm)$ is non-constant for any $n=1,\dots,N$, so we obtain the required non-constant column condition.
\end{proof}

Finally, suppose for inductive purposes Theorem \ref{final-prop} is already established for some $r \geq 1$.
If  $F \colon \mathbb{B} \to \Sigma$ is a $\cS_{p,N}$-Sudoku solution with non-constant columns, then by hypothesis there exists an  affine form $(n,m) \mapsto A^{(r)}n+B^{(r)}m+C^{(r)}$ that is vertically non-degenerate modulo $p$, such that
$$
F(n,m) = f_p(A^{(r)}n+B^{(r)}m+C^{(r)})
$$
whenever $(n,m) \in \mathbb{B}$ is such that $\nu_p(A^{(r)}n+B^{(r)}m+C^{(r)}) \leq r$.  As in the $r=1$ case, we may find integers $D, E$ such that the  affine forms $(n,m) \mapsto A^{(r)} n + B^{(r)} m + C^{(r)}$ and $(n,m) \mapsto B^{(r)} (m - Dn - E)$ are identical modulo $p^r$.  We may then apply a shear transformation $(n,m) \mapsto (n,m+Dn+E)$ as before to assume without loss of generality that
\begin{equation}\label{nup-base}
F(n,m) = f_p(B^{(r)}m)
\end{equation}
whenever $(n,m) \in \mathbb{B}$ is such that $\nu_p(m) \leq r$, thus effectively setting $A^{(r)}$ and $C^{(r)}$ to zero.

By Corollary \ref{nonconst-dil}, the function $(n,m)\mapsto F(n,pm)$ is a $\cS_{p,N}$-Sudoku solution with non-constant columns. Thus, by the induction hypothesis, we can find an  affine form $(n,m) \mapsto A_{r+1}n+B_{r+1}m+C_{r+1}$ that is vertically non-degenerate modulo $p$ such that
\begin{equation}\label{fnpm}
F(n,pm) = f_p(A_{r+1} n + B_{r+1} m + C_{r+1})
\end{equation}
whenever $(n,m) \in \mathbb{B}$ is such that $\nu_p(A_{r+1} n + B_{r+1} m + C_{r+1}) \leq r$. 
In particular, by combining \eqref{fnpm} with \eqref{nup-base} we have
$$
F(n,p^r m) =  f_p(A_{r+1} n + B_{r+1} p^{r-1} m + C_{r+1}) = f_p(B^{(r)} m)$$
whenever $(n,m) \in \mathbb{B}$ is such that $\nu_p(m)=0$ and $\nu_p(A_{r+1} n + B_{r+1} p^{r-1} m + C_{r+1}) \leq r$.  Since $\nu_p(B_{r+1})=0$, we may  factor 
$$A_{r+1} n + B_{r+1} p^{r-1} m + C_{r+1} = p^j (A'_{r+1} n + B'_{r+1} m + C'_{r+1})$$ 
for some $0 \leq j \leq r-1$ and some  affine form $(n,m) \mapsto A'_{r+1}n+B'_{r+1}m+C'_{r+1}$ that is non-degenerate modulo $p$.  We conclude that
$$\pi_p( A'_{r+1} n + B'_{r+1} m + C'_{r+1} ) = \pi_p( B^{(r)} m )$$
whenever $(n,m) \in \mathbb{B}$ is such that 
\begin{equation}\label{nup0}
\nu_p(A'_{r+1}n+B'_{r+1}m+C'_{r+1}) = \nu_p(B^{(r)} m)=0.
\end{equation}
Since \eqref{nup0} holds outside of at most $2p$ cosets of $p\Z\times p\Z$, we see from Lemma \ref{basic-affine}(iv) and \eqref{p-large} that $A'_{r+1}n+B'_{r+1}m+C'_{r+1}$ and $B^{(r)} m$ are identical mod $p$, so $B'_{r+1} = B_{r+1} p^{r-1-j}$ is coprime to $p$ and $A'_{r+1}, C'_{r+1}$ are divisible by $p$. This forces $j=r-1$, and $A_{r+1}, C_{r+1}$ divisible by $p^r$, and also $B_{r+1} = B^{(r)} \pmod{p}$.  We then claim that
$$ F(n,m) = f_p( p A_{r+1} n + B_{r+1} m + p C_{r+1} )$$
whenever $(n,m) \in \mathbb{B}$ is such that $\nu_p( p A_{r+1} n + B_{r+1} m + p C_{r+1} ) \leq r+1$.  When $m$ is divisible by $p$ this follows from \eqref{fnpm} and \eqref{fp-mult}, while for $m$ coprime to $p$ this follows from \eqref{nup-base} and \eqref{fp-period}.  Since the form $(n,m) \mapsto pA_{r+1} n + B_{r+1} m + pC_{r+1}$ is vertically non-degenerate modulo $p$, by setting
$$A^{(r+1)}\coloneqq pA_{r+1},\quad B^{(r+1)}\coloneqq B_{r+1},\quad C^{(r+1)}\coloneqq pC_{r+1},$$ 
we conclude that Theorem \ref{final-prop} holds for $r+1$,  closing the induction, and completing the proof of Theorem \ref{final-prop} for all $r \geq 0$.

We are now ready to establish Theorem \ref{class}(ii). 

\subsection{Proof of Theorem \ref{class}(ii)}
 Let $F$ be a $\cS_{p,N}$-Sudoku solution with non-constant columns.  By Theorem \ref{final-prop} (and the axiom of choice), we see that for each $r \geq 0$ we can find an  affine form $(n,m) \mapsto A^{(r)} n + B^{(r)} m + C^{(r)}$ with $\nu_p(B^{(r)})=0$ such that
$$ F(n,m) = f_p(A^{(r)} n + B^{(r)} m + C^{(r)})$$
whenever $(n,m) \in \mathbb{B}$ is such that $\nu_p(A^{(r)} n + B^{(r)} m + C^{(r)}) \leq r$.  By the compactness of $\Z_p$, we can find a subsequence $(A^{(r_i)}, B^{(r_i)}, C^{(r_i)})$ of $(A^{(r)},B^{(r)},C^{(r)})$ which converges in the $p$-adic topology to some limit $(A,B,C) \in \Z_p^3$.  It is then a routine matter to pass to the limit (using the continuity properties of $f_p$ and $\nu_p$ arising from \eqref{fp-period}) and conclude that
$$ F(n,m) = f_p(A n + B m + C)$$
whenever $(n,m) \in \mathbb{B}$ is such that $An+Bm+C \neq 0$, giving Theorem \ref{class}(ii).

\begin{figure}
    \centering
    \includegraphics[width = .8\textwidth]{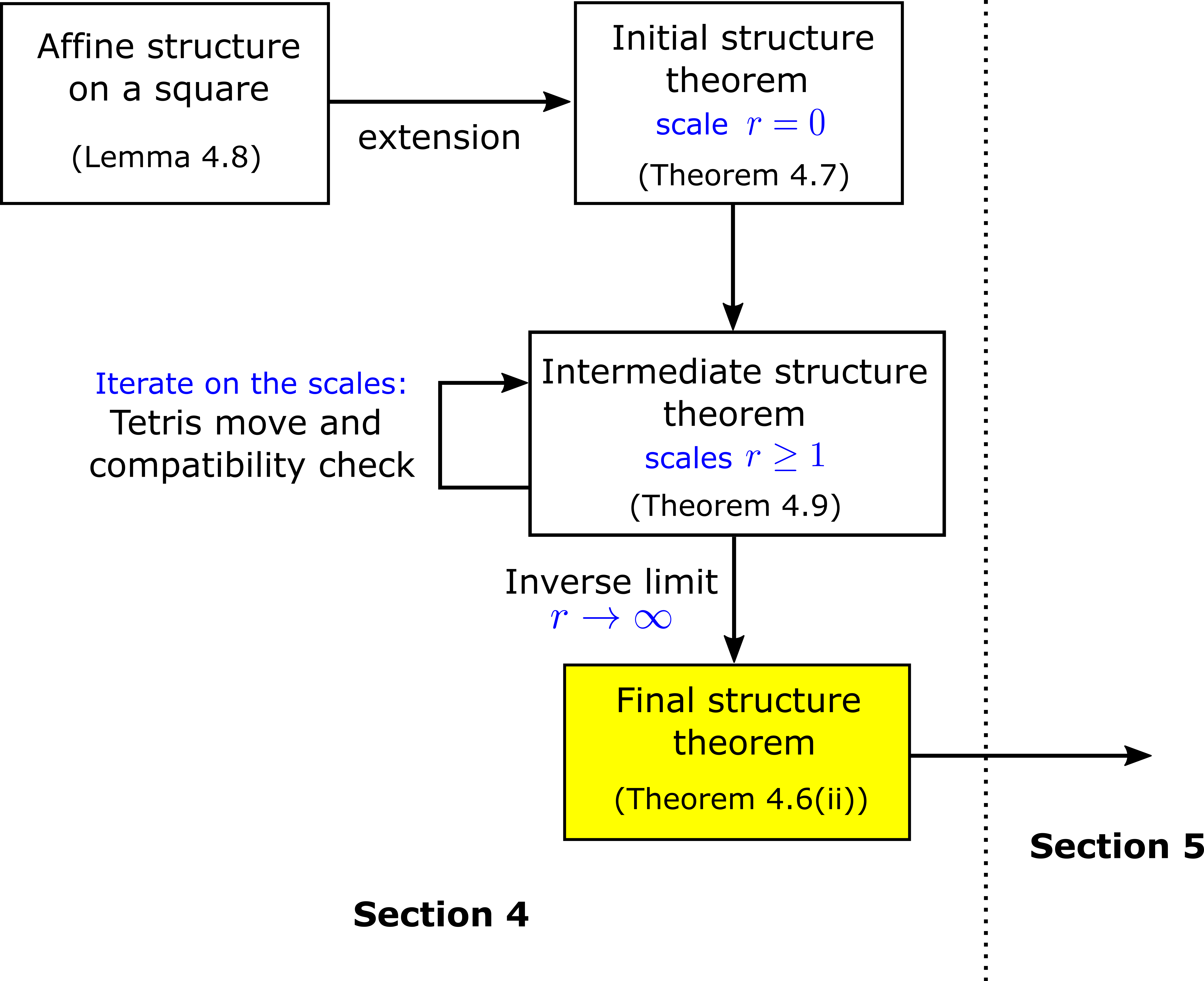}
    \caption{A high level overview of the proof of Theorem \ref{class}(ii) in Section \ref{sec:4}. This will be used in Section \ref{sec:5} to prove Theorem \ref{domino-to-sudoku}; see Figure \ref{fig:sec4-5}.}
    \label{fig:sec4}
\end{figure}

We summarize the steps just taken to establish Theorem \ref{class}(ii) in Figure \ref{fig:sec4}. 

\begin{remark}\label{alt}  We can use \eqref{fp-mult} to write the conclusion of Theorem \ref{class}(ii) in the alternate form
\begin{equation}\label{fnm-cDE}
F(n,m) = c f_p(m - Dn - E)
\end{equation}
whenever $(n,m) \in \mathbb{B}$ is such that $m \neq Dn+E$, where $c \in (\Z/p\Z)^\times$ and $D,E \in \Z_p$ are given by the formulae
$$ c \coloneqq f_p(B); \quad D \coloneqq - \frac{A}{B}; \quad E \coloneqq - \frac{C}{B}.$$
In this form one can show that the coefficients $c,D,E$ are uniquely determined by $F$ (thus giving a one-to-one correspondence between $\cS_{p,N}$-Sudoku solutions with non-constant columns and triples $(c,D,E)$ in $(\Z/p\Z)^\times \times \Z_p \times \Z_p$).  Indeed, from \eqref{fnm-cDE} we have
$$ F(n,m+1) - F(n,m) = c$$
whenever $m, m+1 \neq Dn+E$, which ensures that $c$ is uniquely determined by $F$.  It then suffices to show that $\pi_{p^r}(D), \pi_{p^r}(E)$ is uniquely determined by $F$ for any $r \geq 0$.  This is trivial for $r=0$. Assuming inductively that $r \geq 1$ and that $\pi_{p^{r-1}}(D), \pi_{p^{r-1}}(E)$ has already been shown to be uniquely determined by $F$, we see from \eqref{fnm-cDE} that for any $(n,m) \in \mathbb{B}$ with $m = Dn + E \pmod{p^{r-1}}$, we have
$$ F(n,m+p^{r-1} h) = c \pi_p( h - h_{n,m} )$$
whenever $h \in \Z$ and $\pi_p(h) \neq \pi_p(h_{n,m})$, where $h_{n,m} \coloneqq \frac{m-Dn-E}{p^{r-1}}$.  From this (and Lemma \ref{basic-affine}(iii)) we see that $\pi_p(h_{n,m})$ is uniquely determined by $n,m$, which implies that $\pi_{p^r}(D), \pi_{p^r}(E)$ are uniquely determined by $F$, as desired.
\end{remark}

\section{From the domino problem to a decorated $p_1\times p_2$-adic Sudoku puzzle}\label{sec:5}

In this section we will prove Theorem \ref{domino-to-sudoku}.  We will need two distinct primes $p_1,p_2$ obeying \eqref{p-large}; for instance, we can take
$$ p_1 = 53; \quad p_2 = 59$$
although the precise values of these primes will not be of significance to our arguments as long as they satisfy the largeness condition \eqref{p-large}.  We also set the width $N$ to be
$$ N \coloneqq p_1^2 p_2^2,$$
so that $N$ is a multiple of both $p_1^2$ and $p_2^2$.  We adopt the abbreviations
$$ f_{p_1,p_2}(n) \coloneqq (f_{p_1}(n), f_{p_2}(n)); \quad \nu_{p_1,p_2}(n) \coloneqq (\nu_{p_1}(n), \nu_{p_2}(n)).$$

Now we assign a ``decorated $p_1\times p_2$-adic Sudoku'' rule to any domino set.

\begin{definition}[Constructing a decorated $p_1\times p_2$-adic Sudoku rule]\label{sudoku-construction} 
Let $\cR = (\cW, \cR_1, \cR_2)$
be a domino set.  We then construct a Sudoku rule $\cS^\cR = \cS^\cR_{N,\Sigma}$ as follows.
\begin{itemize}
    \item[(i)] We set the width $N$ of the Sudoku to be
    $$ N \coloneqq p_1^2 p_2^2.$$
    \item[(ii)] We set the digit set $\Sigma$ of the Sudoku to be
    $$ \Sigma \coloneqq (\Z/p_1\Z)^\times \times (\Z/p_2\Z)^\times \times \cW.$$
    \item[(iii)] We set $\cS^\cR$ to be the collection of all functions $g \colon \{0,\dots,N-1\} \to \Sigma$ of the form $g = (g_1, g_2, w)$, with 
    \begin{align*}
        g_1 &\colon \{0,\dots,N-1\} \to (\Z/p_1\Z)^\times\\
        g_2 &\colon \{0,\dots,N-1\} \to (\Z/p_2\Z)^\times\\
        w &\colon \{0,\dots,N-1\} \to \cW,
    \end{align*}
     for which there exist an affine form $n \mapsto a n + b$, non-degenerate modulo $p_1$ as well as modulo $p_2$, together with a $\cR$-domino function  
     $$\cT \colon [(0,0),(t_1,t_2)] \to \cW$$ on the rectangle $[(0,0),(t_1,t_2)]$, with $t_l$ for $l=1,2$ defined to equal $1$ when $\nu_{p_l}(a) = 0$ and $0$ otherwise, such that for $n=1,\dots,N$, one has
    \begin{align*}
    g_l(n) &=  f_{p_l}(an+b) \hbox{ whenever } l=1,2 \hbox{ and } \nu_{p_l}(an+b) \leq t_l; \\
    w(n) &= \cT( \nu_{p_1,p_2}(an+b) ) \hbox{ whenever } \nu_{p_1,p_2}(an+b) \leq (t_1,t_2).
    \end{align*}
\end{itemize}
\end{definition}

We write a $\cS^\cR$-Sudoku solution $F$ as a triple $F = (F_1, F_2, w)$, where 
\begin{align*}
    F_1 &\colon \mathbb{B} \to (\Z/p_1\Z)^\times\\
    F_2 &\colon \mathbb{B} \to (\Z/p_2\Z)^\times\\
    w &\colon \mathbb{B} \to \cW.
\end{align*}
From the definitions we see that if $F = (F_1,F_2,w)$ is a $\cS^\cR$-Sudoku solution, then 
$F_l$ is a $\cS_{p_l,N}$-Sudoku solution for $l=1,2$.

The key proposition is then

\begin{proposition}\label{dom-solv}  Let $p_1, p_2$ be distinct primes obeying \eqref{p-large}, let $N \coloneqq p_1^2 p_2^2$, and let $\cR$ be a domino set.  Then the $\cR$-domino problem is solvable on $\N^2$ if and only if there exists a $\cS^\cR$-Sudoku solution $F = (F_1,F_2, w)$ in which $F_1, F_2$ both have non-constant columns.
\end{proposition}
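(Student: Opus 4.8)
The plan is to attach to every $\cS^\cR$-Sudoku solution with non-constant columns a \emph{position map} $\mathrm{pos}\colon \mathbb{B}\to(\N\cup\{\infty\})^2$, to prove that the decoration $w$ factors through $\mathrm{pos}$, and that the induced function on $\N^2$ is a $\cR$-domino function; the reverse implication will be a direct construction. For the harder direction I would first apply Theorem~\ref{class}(ii) to each of $F_1,F_2$ and rewrite the conclusion via Remark~\ref{alt} as
$$ F_l(n,m) = c_l\, f_{p_l}(m - D_l n - E_l), \qquad c_l\in(\Z/p_l\Z)^\times,\ D_l,E_l\in\Z_{p_l}. $$
This lets me define the intrinsic position map
$$ \mathrm{pos}(n,m) \coloneqq \bigl(\nu_{p_1}(m-D_1 n-E_1),\, \nu_{p_2}(m-D_2 n-E_2)\bigr), $$
which (using $p$-adic approximation and the Chinese Remainder Theorem, since $p_1\neq p_2$) surjects onto $\N^2$.

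The key computation is a comparison, along each non-vertical line $\ell_{j,i}$, between the affine form $a n + b$ furnished by the rule $\cS^\cR$ and the global valuations. Matching $f_{p_l}(an+b)$ against $F_l(n,jn+i)=c_l f_{p_l}((j-D_l)n+(i-E_l))$ at generic cells and invoking the extrapolation Lemma~\ref{basic-affine}(iii) forces $a n + b$ to equal, up to a $p_l$-adic unit, the primitive part of $(j-D_l)n+(i-E_l)$ obtained after dividing out $p_l^{k_l}$, where $k_l\coloneqq\min(\nu_{p_l}(j-D_l),\nu_{p_l}(i-E_l))$. Hence the rule-valuation $\nu_{p_l}(an+b)$ equals the global valuation minus the base valuation $k_l$, so that along $\ell_{j,i}$ the attached domino function $\cT=\cT_{j,i}$ records the values of $w$ exactly on the shifted rectangle $[(k_1,k_2),(k_1+t_1,k_2+t_2)]$ of positions. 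In particular $\nu_{p_l}(a)=0$ (i.e.\ $t_l=1$) precisely when $\nu_{p_l}(j-D_l)=k_l$.

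I expect the main obstacle to be showing that $w$ depends only on $\mathrm{pos}(n,m)$, thereby producing a well-defined $W\colon\N^2\to\cW$ with $w=W\circ\mathrm{pos}$. The idea is that cells sharing a given finite position can be joined through overlapping non-vertical lines, and along each line the rule pins $w$ to a function of the rule-position alone; a connectivity/propagation argument in the spirit of the extension property used in the proof of Theorem~\ref{initial-prop} then forces $w$ to be constant on each position-fibre. Once $W$ is well defined, the two domino conditions follow by probing the right lines: to obtain $(W(s_1,s_2),W(s_1+1,s_2))\in\cR_1$ I would use the Chinese Remainder Theorem to choose integers $j,i$ with $\nu_{p_1}(j-D_1)=s_1\le\nu_{p_1}(i-E_1)$ and $\min(\nu_{p_2}(j-D_2),\nu_{p_2}(i-E_2))=s_2$, so the line has base valuations $(s_1,s_2)$ and $t_1=1$; then $\cT_{j,i}$ is a domino function on a rectangle containing the horizontal tile $[(0,0),(1,0)]$, giving $(W(s_1,s_2),W(s_1+1,s_2))=(\cT_{j,i}(0,0),\cT_{j,i}(1,0))\in\cR_1$, and symmetrically $t_2=1$ yields the vertical constraints. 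Thus $W$ is a $\cR$-domino function on $\N^2$.

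For the reverse direction I would start from a $\cR$-domino function $\cT_0$ on $\N^2$ (available by Lemma~\ref{dom-equiv}) and set $F_l(n,m)\coloneqq f_{p_l}(m)$ for $l=1,2$ and $w(n,m)\coloneqq\cT_0(\nu_{p_1}(m),\nu_{p_2}(m))$ for $m\neq 0$, assigning $w(n,0)$ arbitrarily since those cells are never constrained (the relevant affine form vanishes there). By Theorem~\ref{class}(i) each $F_l$ is a $\cS_{p_l,N}$-Sudoku solution with non-constant columns, so it remains to check that every line restriction lies in $\cS^\cR$. On $\ell_{j,i}$ I would factor the common $p_l$-power out of $jn+i$, use the Chinese Remainder Theorem to select a single integer affine form $an+b$ matching the primitive parts modulo $p_1^2$ and $p_2^2$, and take $\cT$ to be the translate of $\cT_0$ by the base valuations $(k_1,k_2)=(\min(\nu_{p_1}(j),\nu_{p_1}(i)),\min(\nu_{p_2}(j),\nu_{p_2}(i)))$, restricted to the relevant small rectangle. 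Since a translate-restriction of a domino function is again a domino function, this exhibits the restriction as an element of $\cS^\cR$; the only cases needing extra care are the lines meeting $m=0$ and the fully degenerate row $\ell_{0,0}$, where the vanishing of $an+b$ leaves the affected cells unconstrained.
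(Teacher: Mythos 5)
Your overall strategy coincides with the paper's: apply Theorem~\ref{class}(ii) and Remark~\ref{alt} to $F_1,F_2$, compare the per-line affine form $an+b$ supplied by the rule against the global $p_l$-adic structure via Lemma~\ref{basic-affine}(iii), deduce that $w$ depends only on the valuation vector, and read off the domino constraints from lines with $t_l=1$; your converse direction is the same direct construction the paper uses. The one genuine packaging difference is that you carry the $p$-adic coefficients $D_l,E_l$ throughout and build a domino function on all of $\N^2$ in one stroke, whereas the paper first fixes a target rectangle $[(0,0),r]$, approximates $D_l,E_l$ by integers modulo $p_l^{r_l}$ via the Chinese remainder theorem, shears them away, and recovers $\N^2$ only at the end via Lemma~\ref{dom-equiv}. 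Your version dispenses with the compactness step, at the cost of dragging $D_l,E_l$ through every subsequent computation; both routes work.

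That said, the step you yourself flag as ``the main obstacle'' is exactly where the paper spends most of its effort, and your sketch of it is too optimistic as stated. Along a single line $\ell_{j,i}$ the rule pins $w$ only at cells whose rule-valuation is at most $(t_1,t_2)$, i.e.\ whose position lies in the small window above the base valuation $(k_1,k_2)$; two parallel lines with the same base valuation never meet, so ``joining cells through overlapping lines'' requires interpolating with transversal lines and checking that the intersection cells sit at rule-valuation $(0,0)$ on \emph{both} lines (one needs the intersection abscissae to be inequivalent mod $p_1$ and mod $p_2$, plus the fact that a nonvanishing affine form mod $p_l$ cannot vanish at two such points). The paper does this concretely in three stages: rows give independence of $n$; lines with $\nu_{p_l}(j)>\nu_{p_l}(i)$ identify cells with equal valuation and equal leading digit; and the lines $\ell_{j,0}$ with $j=p_1^{s_1}p_2^{s_2}$ kill the residual dependence on $f_{p_1,p_2}$. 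Separately, your assertion that the rule-valuation equals the global valuation minus $k_l$ at level $1$ (needed so that the rule-valuation-$(1,0)$ cells really have position $(s_1+1,s_2)$) does not follow from the mod-$p_l$ extrapolation alone: you must redo the comparison mod $p_l^2$ by matching $f_{p_l}(an+b)$ against $F_l$ on the arithmetic progression where the valuation is exactly $1$ --- this is the $F_1(p_1 n, j p_1 n)$ computation in the paper's proof. Neither issue is fatal, and both can be repaired along the lines you indicate, but as written these are the two places where the actual content of the proof lives.
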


\begin{figure}
    \centering
    \includegraphics[width = .9\textwidth]{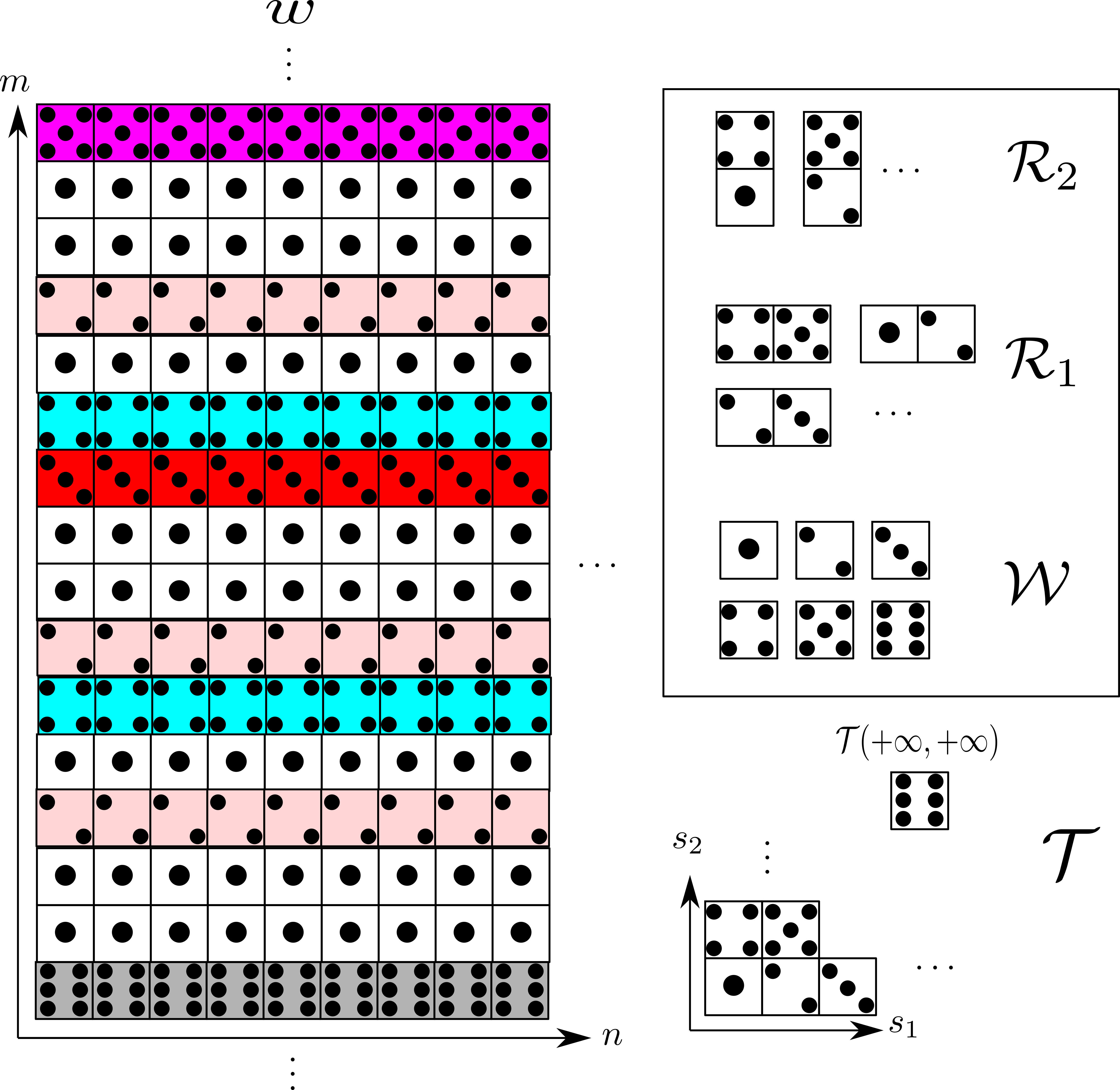}
    \caption{A portion of the ``decoration'' function $w(n,m) \coloneqq \cT( \nu_{3,5}(m) )$, where $\cT$ is the ${\mathcal R}$-domino function depicted on the right-hand side for the indicated domino set ${\mathcal R} = ({\mathcal W}, {\mathcal R}_1, {\mathcal R}_2)$, and we have arbitrarily assigned the $6$-pip to $\cT(+\infty,+\infty)$.  The triple $(F_1,F_2,w)$, where $F_1, F_2$ were defined in Figures \ref{fig:p3}, \ref{fig:p5} (where $F_1$ and $F_2$ are now extended to the board of width $N=9\times 25$), will then be a $\cS^\cR$-Sudoku solution.  Here we have used the parameter choices $(p_1,p_2,N)=(3,5,9\times 25)$ for sake of illustration, despite the fact that these choices do not obey the hypotheses of Proposition \ref{dom-solv}.  The coloring scheme for $w$ is a superposition of those in Figures \ref{fig:p3}, \ref{fig:p5}, with light purple (combination of pink and cyan) corresponding to the case $\nu_{3,5}(m)=(1,1)$; this scheme resembles the outcome of the children's game ``Fizz buzz''.} 
    \label{fig:domino}
\end{figure}

\begin{proof}  First suppose that the $\cR$-domino problem has a solution $\cT \colon \N^2 \to \cW$.  Then we can define the function $F = (F_1,F_2,w)$ by the formulae
\begin{align*}
    F_1(n,m) &\coloneqq f_{p_1}(m) \\
    F_2(n,m) &\coloneqq f_{p_2}(m) \\
    w(n,m) &\coloneqq \cT( \nu_{p_1,p_2}(m) )
\end{align*}
where we arbitrarily assign a value in $\cW$ to $\cT(+\infty,+\infty)$; see Figure \ref{fig:domino}.  Clearly $F_1, F_2$ have non-constant columns.  We claim that $F$ is a $\cS^\cR$-solution, thus we need $$n \mapsto F(n,jn+i)$$ to lie in $\cS^\cR$ for every $j,i \in \Z$.  In the degenerate case $j=i=0$ this follows from Definition \ref{sudoku-construction}(iii) by taking the constant (but still non-degenerate modulo $p_1,p_2$) affine form $n \mapsto 1$ (so that $t_1=t_2=0$) and the domino function on the $1 \times 1$ rectangle $[(0,0),(0,0)]$ is given by $(0,0) \mapsto \cT(+\infty,+\infty)$, which is vacuously a domino function since a $1 \times 1$ rectangle contains no domino tiles.
    
Now suppose that $(j,i) \neq (0,0)$.  Then for $l=1,2$ we may write $(j,i) = p_l^{d_l} (j_l,i_l)$, where $d_l$ is the natural number $d_l \coloneqq \min(\nu_{p_l}(j), \nu_{p_l}(i))$ and $j_l, i_l$ are integers  that are not both divisible by $p_l$.  From \eqref{fp-mult} we then have
$$ F(n,jn+i) = ( f_{p_1}(j_1 n + i_1), f_{p_2}(j_2 n + i_2), \cT( (\nu_{p_1}(j_1n + i_1), \nu_{p_2}(j_2 n + i_2)) + (d_1,d_2) ).$$
By the Chinese remainder theorem, we can find integers $a,b$ such that $$a = j_l \pmod{p_l^2},\quad b = i_l \pmod{p_l^2}$$ for $l=1,2$; in particular, the affine form $n \mapsto an+b$ is non-degenerate modulo both $p_1$ and $p_2$.  From \eqref{fp-period} we then have
$$ F(n,jn+i) = ( f_{p_1}(a n + b), f_{p_2}(a n + b), \cT( \nu_{p_1,p_2}(a n + b) + (d_1,d_2) )$$ 
whenever $\nu_{p_1,p_2}(an+b) \in [(0,0),(1,1)]$.  In particular, this holds whenever
$\nu_{p_1,p_2}(an+b) \in [(0,0),(t_1,t_2)]$, where $t_1,t_2 \in \{0,1\}$ are defined as in Definition \ref{sudoku-construction}(iii). 
Since $\cT$ is a $\cR$-domino function on $\N^2$, the translated function $s \mapsto \cT( s + (d_1,d_2) )$ is a $\cR$-domino function on $[(0,0), (t_1,t_2)]$, and hence $n \mapsto F(n,jn+i)$ lies in $\cS^\cR$ as required.   

Conversely, suppose that $F = (F_1,F_2,w)$ is a $\cS^\cR$-Sudoku solution with $F_1,F_2$ having non-constant columns.  By Lemma \ref{dom-equiv}, it suffices to show that the $\cR$-domino problem is solvable on $[(0,0), r]$ for any $r \geq (0,0)$.

Fix $r = (r_1,r_2) \geq (0,0)$.  By Remark \ref{alt}, for $l=1,2$ we may find $c_l \in (\Z/p\Z)^\times$ and $D_l, E_l \in \Z_{p_l}$ such that
$$ F_l(n,m) = c_l f_{p_l}( m - D_l n - E_l  )$$
whenever $(n,m) \in \mathbb{B}$ with $m \neq D_l n + E_l$.  By the Chinese remainder theorem, we may find integers $c, D, E$ such that $$\pi_{p_l}(c) = c_l,\quad D = D_l \mod{p_l^{r_l}},\quad E = E_l \mod{p_l^{r_l}}$$ for $l=1,2$. By \eqref{fp-mult}, \eqref{fp-period} we then have
$$ F_l(n,m) = f_{p_l}( c (m - D n - E)  )$$
for $l=1,2$ whenever $(n,m) \in \mathbb{B}$ with $\nu_{p_l}(m - Dn - E) \leq r_l$.  We may now apply a shear transformation to replace $F(n,m)$ by $F(n,m+Dn+E)$ which, by Proposition \ref{sudoku-affine}(ii), does not affect the property of $F$ being a $\cS^\cR$-Sudoku solution with $F_1,F_2$ having non-constant columns. Thus, this allows us to eliminate $D, E$, so that 
\begin{equation}\label{flnm}
F_l(n,m) = f_{p_l}( c m  )
\end{equation}
whenever $l=1,2$ and $(n,m) \in \mathbb{B}$ with $\nu_{p_l}(m) \leq r_l$.  

Now we focus on the function $w$.  For any non-vertical line $\ell_{j,i}$, we see from Definition \ref{sudoku-def} and Definition \ref{sudoku-construction}(iii) that there exist an affine form $$n \mapsto a_{j,i} n + b_{j,i},$$ non-degenerate modulo $p_l$ for $l=1,2$, as well as a
$\cR$-domino function $$\cT_{j,i} \colon [(0,0),(t_{j,i,1},t_{j,i,2})] \to \cW$$
where, for each $l=1,2$, $t_{j,i,l}$ is equal to $1$ when $\nu_{p_l}(a_{j,i}) = 0$ and equal to $0$ otherwise, such that
\begin{equation}\label{flji}
F_l(n, jn+i) = f_{p_l}( a_{j,i} n + b_{j,i} )
\end{equation}
for $l=1,2$ and $n \in \{0,\dots,N-1\}$ with $\nu_{p_l}(a_{j,i} n + b_{j,i}) \leq t_{j,i,l}$, and
\begin{equation}\label{wji}
 w(n, jn+i) = \cT_{j,i}( \nu_{p_1,p_2}( a_{j,i} n + b_{j,i} ) )
 \end{equation}
whenever $n \in \{0,\dots,N-1\}$ with $\nu_{p_1,p_2}(a_{j,i} n + b_{j,i})) \leq (t_{j,i,1},t_{j,i,2})$,

We analyze these properties for various choices of non-vertical line $\ell_{j,i}$.  First consider the case of a row $\ell_{0,i}$ with $\nu_{p_1,p_2}(i) \leq r$.  From \eqref{flnm}, \eqref{flji} we have for each $l=1,2$ that
$$
F_l(n, i) =f_{p_l}(ci) = \pi_{p_l}( a_{0,i} n + b_{0,i})$$
for  all $n \in \{0,\dots,N-1\}$ outside of at most two cosets of $p_l \Z$.  From Lemma \ref{basic-affine}(iii), we conclude that $\pi_{p_l}(a_{0,i})=0$ and $\pi_{p_l}(b_{0,i}) = f_{p_l}(ci)$. From \eqref{wji} we then see that
$$ w(n,i) = \cT_{0,i}(0,0)$$
for all $n \in \{0,\dots,N-1\}$.  In particular, $w(n,i)$ is independent of $n$ whenever $\nu_{p_1,p_2}(i) \leq r$.

Next, consider a line $\ell_{j,i}$ with $\nu_{p_1,p_2}(i) \leq r$ and $\nu_{p_l}(j) > \nu_{p_l}(i)$ for $l=1,2$.  From \eqref{fp-period}, \eqref{fp-mult} we have 
$$f_{p_l}(c(jn+i)) = f_{p_l}(ci)\; \hbox{ and }\;\nu_{p_l}(c(jn+i)) = \nu_{p_l}(ci)$$ 
for $l=1,2$, so by arguing as before we have
$$ w(n,jn+i) = \cT_{j,i}(0,0)$$
for all $n \in \{0,\dots,N-1\}$; in particular, 
$$w(0,i) = w(1,j+i).$$
Therefore, since $w(n,m)$ is independent of $n$ when $\nu_{p_1,p_2}(m) \leq r$, we have
\begin{equation}
    w(n,i) = w(0,i)=w(1,j+i)=w(n',j+i) 
\end{equation}
whenever $n,n' \in \{0,\dots,N-1\}$ and $\nu_{p_l}(i) \leq r_l$ and $\nu_{p_l}(j) > \nu_{p_l}(i)$ for $l=1,2$.
Note that $\nu_{p_l}(j) > \nu_{p_l}(i)$ if and only if $f_{p_l}(i) = f_{p_l}(j+i)$; thus, making the change of variables $m=i$, $m'=j+i$, we conclude that
$$ w(n,m) = w(n',m')$$
whenever $(n,m), (n',m') \in \mathbb{B}$ are such that 
$$\nu_{p_1,p_2}(m) = \nu_{p_1,p_2}(m') \leq r \; \hbox{ and }\; f_{p_1,p_2}(m) = f_{p_1,p_2}(m').$$ 
In other words, the value of $w(n,m)$ depends only on $\nu_{p_1,p_2}(m)$ and $f_{p_1,p_2}(m)$, so long as $(\nu_{p_1,p_2}(m)) \leq r$, thus one has
\begin{equation}\label{tw}
    w(n,m) = \tilde w( \nu_{p_1,p_2}(m), f_{p_1,p_2}(m) )
\end{equation}
whenever $(n,m) \in \mathbb{B}$ is such that $(\nu_{p_1,p_2}(m)) \leq r$, for some function $$\tilde w \colon [(0,0),r] \times (\Z/p_1\Z)^\times \times (\Z/p_2\Z)^\times \to \cW.$$

In fact, we can remove the dependence on $f_{p_1,p_2}(m)$ as follows.  Let $0 \leq s_1 \leq r_1$ and $0 \leq s_2 \leq r_2$, and consider the line $\ell_{j,i}$ with $i \coloneqq 0$ and $j \coloneqq p_1^{s_1} p_2^{s_2}$.  From \eqref{flnm}, \eqref{fp-mult}, \eqref{fp-period} we have for $l=1,2$ that
$$ F_l(n,jn) = \pi_{p_l}(c n) $$
for all $n \in \{0,\dots,N-1\}$ not divisible by $p_l$, while from \eqref{flji} we have
$$ F_l(n, jn) = \pi_{p_l}( a_{j,0} n + b_{j,0} )$$
for all $n \in \{0,\dots,N-1\}$ outside of a coset of $p_l\Z$.  We conclude from Lemma \ref{basic-affine}(iii) that the affine forms $a_{j,0} n + b_{j,0}$ and $cn$ are identical modulo $p_l$ for $l=1,2$.  From \eqref{wji}, we conclude that
\begin{equation}\label{njn}
 w(n, jn) = \cT_{j,0}( 0, 0 )
 \end{equation}
whenever $n \in \{0,\dots,N-1\}$ is coprime to $p_1,p_2$.  For such $n$, we have $\nu_{p_l}(jn) = s_l$, while $(f_{p_1}(jn), f_{p_2}(jn))$ traverses every pair in $(\Z/p_1\Z)^\times \times (\Z/p_2\Z)^\times$. Comparing this with \eqref{tw} we conclude that the function $\tilde w$ is constant in the second two variables, thus we can in fact write
\begin{equation}\label{njn-2}
w(n,m) = \cT( \nu_{p_1}(m), \nu_{p_2}(m) )
\end{equation}
for some function $\cT \colon [(0,0),r] \to \cW$ and all $(n,m) \in \mathbb{B}$ with $(\nu_{p_1}(m), \nu_{p_2}(m)) \leq r$.

Now let $[s,s+e_1]$ be a horizontal domino tile in $[(0,0), r]$ for some $s = (s_1,s_2)$. Thus $0 \leq s_1 \leq r_1-1$ and $0 \leq s_2 \leq r_2$.  We again consider the line $\ell_{j,i}$ with $i \coloneqq 0$ and $j \coloneqq p_1^{s_1} p_2^{s_2}$.  We have already seen that 
$a_{j,0} n + b_{j,0}$ and $cn$ are identical modulo $p_1$ and modulo $p_2$, so in particular $\nu_{p_1}(a_{j,0})=0$ and $\nu_{p_1}(b_{j,0}) \geq 1$; among other things, this forces $t_{j,i,1}=1$.  From \eqref{flnm}, \eqref{fp-mult}, \eqref{fp-period} we have
$$ F_1(p_1 n,jp_1 n) = \pi_{p_1}(c n) $$
when $n \in \{1,\dots,N/p_1\}$ is coprime to $p_1$, while  from \eqref{flji} we have
$$ F_1(p_1n, jp_1 n) = \pi_{p_1}\left( a_{j,0} n + \frac{b_{j,0}}{p_1} \right)$$
when $n \in \{1,\dots,N/p_1\}$ outside of a coset of $p_l\Z$. We conclude from Lemma \ref{basic-affine}(iii) that the affine forms 
$$n\mapsto a_{j,0} n + \frac{b_{j,0}}{p_1}, \quad n\mapsto cn$$ 
are identical modulo $p_1$.  Recall also that $a_{j,0} n + b_{j,0}$ and $cn$ are identical modulo $p_2$.  We conclude that if $n$ is coprime to both $p_1$ and $p_2$, then $\nu_{p_1}( a_{j,0} p_1 n + b_{j,0} ) = 1$ and $\nu_{p_2}( a_{j,0} p_1 n + b_{j,0} ) = 0$. Applying \eqref{wji}, we therefore have
$$
w(p_1 n, jp_1 n) = \cT_{j,0}( 1, 0 )
$$
for such $n$.  From this and \eqref{njn}, \eqref{njn-2} we have
 $$ \cT_{j,0}( 0, 0 ) = \cT(s_1,s_2); \quad  \cT_{j,0}( 1, 0 ) = \cT(s_1+1,s_2).$$
 Since $\cT_{j,0}$ is a $\cR$-domino, we conclude that
 $$ (\cT(s_1,s_2), \cT(s_1+1,s_2)) \in \cR_1$$
 whenever $[s,s+e_1]$ is a horizontal domino tile in $[(0,0), r]$.  A similar argument gives
 $$ (\cT(s_1,s_2), \cT(s_1,s_2+1)) \in \cR_2$$
 whenever $[s,s+e_2]$ is a vertical domino tile in $[(0,0), r]$.  Thus $\cT$ is an $\cR$-domino function on $[(0,0),r]$, and the claim follows.
\end{proof}

In view of the above proposition, we see that to prove Theorem \ref{domino-to-sudoku}, it will suffice to provide an initial condition $\cC = \cC_q$ with the property that a  $\cS^\cR$-Sudoku solution $F = (F_1,F_2,w)$ obeys $\cC$ if and only if $F_1, F_2$ have non-constant columns.  This will be accomplished by setting
$$ q \coloneqq p_1 p_2$$
and defining $\cC \subset (\Z/q\Z) \times (\Z/p_1\Z)^\times \times (\Z/p_2\Z)^\times \times \cW$ to be the set of quadruplets
$$ (a, b_1, b_2, w) \in (\Z/q\Z) \times (\Z/p_1\Z)^\times \times (\Z/p_2\Z)^\times \times \cW$$
such that one of the following statements holds:
\begin{itemize}
    \item $a$ is not coprime to $q$.
    \item $b_1 = a \pmod{p_1}$ and $b_2 = a \pmod{p_2}$.
\end{itemize}

Suppose that $F = (F_1,F_2,w)$ is a $\cS^\cR$-Sudoku solution that obeys the initial condition $\cC$.  Then by Definition \ref{sudoku-def}, for any $n \in \{0,\dots,N-1\}$ there exists a permutation $\sigma_n \colon \Z/q\Z \to\Z/q\Z$ such that for any $m \in \Z$ with $\sigma_n(\pi_q(m))$ coprime to $q$, we have 
\begin{equation}\label{flmns}
F_l(n,m) = \pi_{p_l}(\sigma_n(\pi_q(m)))
\end{equation}
for $l=1,2$.  In particular, the maps $m \mapsto F_l(n,m)$ are not constant, thus $F_1, F_2$ have non-constant columns.

Conversely, suppose that $F_1, F_2$ have non-constant columns.  Applying Theorem \ref{initial-prop}, we see that for $l=1,2$ there exists a non-degenerate affine form 
$$(n,m) \mapsto A^{(0)}_ln+B^{(0)}_lm+C^{(0)}_l$$ 
with $\nu_{p_l}(B^{(0)}_l)=0$ such that
$$
F_l(n,m) = \pi_{p_l}(A^{(0)}_ln+B^{(0)}_lm+C^{(0)}_l)
$$
whenever $(n,m) \in \mathbb{B}$ is such that $\nu_{p_l}(A^{(0)}_ln+B^{(0)}_lm+C^{(0)}_l)=0$. By the Chinese remainder theorem we can find integers $B, D, E$ with $B$ coprime to $q$ such that the affine forms $$(n,m) \mapsto B(m - Dn - E),\quad  (n,m) \mapsto A^{(0)}_l n + B^{(0)}_l m + C^{(0)}_l$$  are identical modulo $p_l$ for $l=1,2$.  If we then define $\sigma_n \colon \Z/q\Z \to \Z/q\Z$ by $$\sigma_n(a) \coloneqq B(a-Dn-E),$$ then $\sigma_n$ is a permutation and
\eqref{flmns} holds whenever $\sigma_n(\pi_q(m))$ is coprime to $q$.  Thus the initial condition $\cC$ holds.  This completes the desired equivalence of the non-constant columns condition and the initial condition $\cC$, and Theorem \ref{domino-to-sudoku} follows. 

See Figure \ref{fig:sec4-5} for a high level overview of the proof of Theorem \ref{domino-to-sudoku}.

\begin{figure}
    \centering
    \includegraphics[width = .8\textwidth]{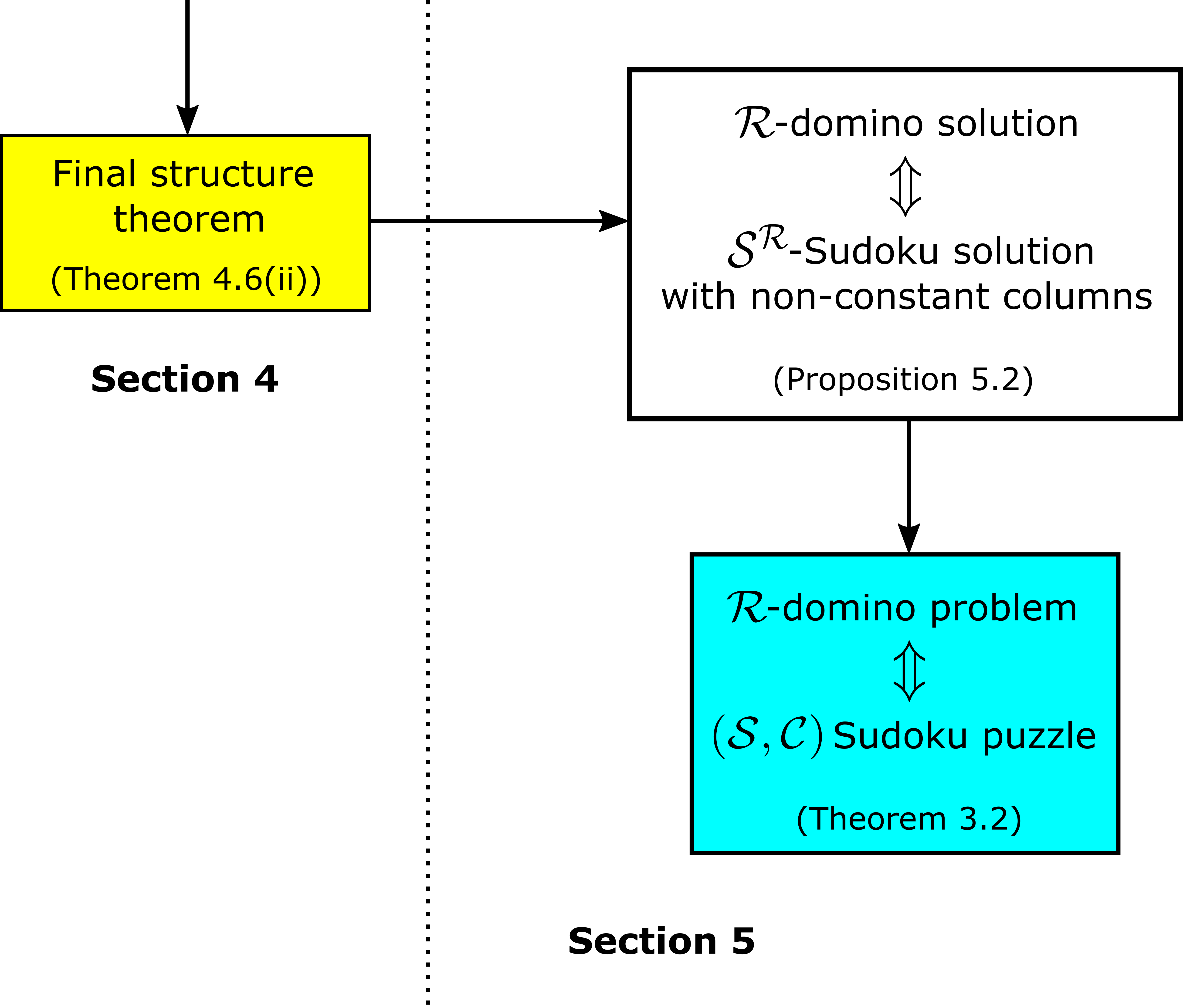}
    \caption{Combined with Figure \ref{fig:sec4}, this gives a high level illustration of the proof of Theorem \ref{domino-to-sudoku} in Sections \ref{sec:4} and \ref{sec:5}.}
    \label{fig:sec4-5}
\end{figure}

\section{Encoding Sudoku puzzles as monotiling problems}\label{sec:6}

In this section we establish Theorem \ref{sudoku-to-monotile}.  Our arguments here will be similar to those in \cite[Sections 6,7]{GT22}, though our task is made easier here by the fact that we permit ourselves to consider monotiling problems in periodic subsets $\Z^2 \times E$ of the ambient group $\Z^2 \times G$, as opposed to the full group, which (as in \cite{GT21}) allows for a simpler and more expressive ``tiling language''.

It will be convenient to adapt the definition of an \emph{expressible property} and a \emph{weakly expressible property} from \cite[Definitions 4.2, 4.13]{GT22}.

\begin{definition}[Expressible properties]\label{expressible-def}  Let $G = (G,+)$ and $H=(H,+)$ be Abelian groups with $G$ finitely generated and $H$ finite.  
\begin{itemize}
    \item[(i)] A \emph{$(G,H)$-property} is a property $P$ of a function $\alpha \colon G \to H$ (or equivalently, a subset of $H^G$).  The property $P$ is \emph{satisfiable} if there is at least one function $\alpha \colon G \to H$ that obeys $P$.
    \item[(ii)]  A $(G,H)$-property $P$ is \emph{expressible in the language of functional equations with subsets}, or \emph{expressible} for short, if there exist a natural number $M$, a natural number $J_i$ and a subset $E'_i$ of $H$ for all $i=1,\dots,M$, and shifts $h_{i,j} \in G$, and subsets $E_{i,j}$ of $H$ for all $i=1,\dots,M$ and $j=1,\dots,J_i$, such that a function $\alpha \colon G \to H$ obeys the system of functional equations
    \begin{equation}\label{aeij}
    \biguplus_{j=1}^{J_i}(\alpha(x+h_{i,j})+E_{i,j}) = E'_i \text{ for all } i=1,\dots,M \text{ and } x\in G
    \end{equation}
    if and only if it obeys the property $P$. 
    \item[(iii)]  If $(H_u)_{u \in {\mathcal U}}$ is a finite collection of finite Abelian groups $H_u = (H_u,+)$, a \emph{$(G,(H_u)_{u \in {\mathcal U}})$-property} is a property $P$ of a tuple of functions $$\alpha_u \colon G \to H_u,\quad u\in\mathcal U,$$ or equivalently, a single function $$\alpha \colon G \to \prod_{u \in {\mathcal U}} H_u.$$   The $(G,(H_u)_{u \in {\mathcal U}})$-property is \emph{expressible} (in the above language) if the corresponding $(G,\prod_{u \in {\mathcal U}} H_u)$-property is expressible.  Similarly, a $(G,(H_u)_{u \in {\mathcal U}})$-property $P$ is \emph{satisfiable} if there exists a tuple of functions $$\alpha_u \colon G \to H_u, \quad u \in {\mathcal U}$$ that obey $P$.
    \item[(iv)] If $(H_u)_{u \in {\mathcal U} \uplus {\mathcal U}^*}$ is a finite collection of finite Abelian groups $H_u = (H_u,+)$ indexed by the disjoint union of two index sets ${\mathcal U}, {\mathcal U}^*$, and $P^*$ is a $(G, (H_u)_{u \in {\mathcal U} \uplus {\mathcal U}^*})$-property, we define the \emph{existential quantification} of $P^*$ to be the $(G, (H_u)_{u \in {\mathcal U}})$-property $P$ defined by requiring a tuple $(\alpha_u)_{u \in {\mathcal U}}$ of functions $$\alpha_u \colon G \to H_u,\quad u\in\mathcal U$$ to obey $P$ if and only if there exists an extension $(\alpha_u)_{u \in {\mathcal U} \uplus {\mathcal U}^*}$ of this tuple that obeys $P^*$.  If a property $P$ arises as the existential quantification of some expressible property $P^*$, we say that $P$ is \emph{weakly expressible} (in the above language). For instance, any property which is expressible is also weakly expressible.
\end{itemize}
\end{definition}

See \cite[Section 4]{GT22} for several examples of expressible and weakly expressible properties.

\begin{remark}\label{rem:persub} In \cite{GT22}, the only subset $E'_i$ of $H$ one was permitted to use in \eqref{aeij} was the whole space $E'_i = H$, and so the notions of expressiveness and weak expressiveness were more restricted than the ones used here; hence our addition of the modifier ``with subsets'' in the current notation.  This reflects the fact that in \cite{GT22} we restricted attention to translational tilings of an entire Abelian group, as opposed to periodic subsets of that group.  It seems plausible that this is merely a technical restriction, and some version of the Sudoku puzzles under consideration could be encoded without using subsets, in which case the set $E$ in Theorem \ref{thm:main} could be taken to be all of $G_0$ (and similarly the set $E$ in Corollary \ref{cor:mainZd} could be taken to be all of $\Z^d$).  Unfortunately, our current library of expressive properties (without subsets) only allows us to express initial conditions involving a period $q$ which is a power of two, whereas here we need $q$ to be divisible by two large primes, so either some modification of the Sudoku puzzle or some enlargement of the library would be needed.
\end{remark}

  The relevance of these properties to tiling problems is given by the following proposition.

\begin{proposition}[Connection between expressible properties and monotiling problems] Let $G = (G,+)$ and $H=(H,+)$ be Abelian groups with $G$ being finitely generated and $H$ finite.  
\begin{itemize}
    \item[(i)]  If one is given an expressible $(G,H)$-property $P$, then one can construct (in finite time) a finite Abelian group $H'$, a subset $E'$ of $H'$, and a finite subset $F$ of $G \times H'$, with the property that $F$ tiles $E'$ if and only if $P$ is satisfiable.
    \item[(ii)]  If one is given an expressible $(G,(H_u)_{u \in {\mathcal U}})$-property $P$, then one can construct (in finite time) a finite Abelian group $H'$, a subset $E'$ of $H'$, and a finite subset $F$ of $G \times H'$, with the property that $F$ tiles $E'$ if and only if $P$ is satisfiable.
    \item[(iii)]  If one is given a weakly expressible $(G,(H_u)_{u \in {\mathcal U}})$-property $P$, then one can construct (in finite time) a finite Abelian group $H'$, a subset $E'$ of $H'$, and a finite subset $F$ of $G \times H'$, with the property that $F$ tiles $E'$ if and only if $P$ is satisfiable.
\end{itemize}
\end{proposition}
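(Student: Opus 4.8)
The plan is to prove part (i) by an explicit finite construction and then deduce (ii) and (iii) as formal consequences. The starting point for (i) is the fundamental dictionary between functions and tilings: a function $\alpha \colon G \to H$ is encoded by its graph $\Gamma_\alpha \coloneqq \{(x,\alpha(x)) : x \in G\} \subset G \times H$, and one checks at once that $\Gamma_\alpha \oplus (\{0\} \times H) = G \times H$; conversely any tiling of $G \times H$ by the single ``column'' $\{0\} \times H$ must have exactly one point in each $G$-fibre and so is the graph of a unique function. The next observation is that a single functional equation $\biguplus_{j}(\alpha(x+h_{j})+E_{j}) = E'$ is exactly the statement that $\Gamma_\alpha \oplus \bigcup_{j}(\{-h_{j}\} \times E_{j}) = G \times E'$: translating this candidate tile by $\Gamma_\alpha$ and reindexing produces, over the fibre above each $x$, precisely the set $\biguplus_{j}(\alpha(x+h_{j})+E_{j})$, which the tiling condition forces to fill $E'$ without multiplicity.

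The heart of (i) is to fuse the ``is a function'' condition together with all $M$ equations of \eqref{aeij} into a single monotiling problem. I would take $H' \coloneqq H \times \Z/L\Z$ for a suitable modulus $L$, using one auxiliary layer to carry the function and one layer per equation. The region's layer-$0$ slice is set to the full group $H$ and its layer-$i$ slice to $E'_i$; the tile $F$ is the union of the full column $\{0\} \times H$ in the function layer with the pieces $\bigcup_{j}(\{-h_{i,j}\} \times E_{i,j})$ placed in the $i$-th equation layer. The easy direction is then the verification above: given $\alpha$ obeying $P$, placing $\Gamma_\alpha$ in the function layer tiles $G \times E'$, the layer-$0$ slice being covered by the columns and each layer-$i$ slice by the $i$-th equation, with no interaction between layers.

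The main obstacle is the converse \emph{rigidity}: one must show that an \emph{arbitrary} tiling $A \oplus F = G \times E'$ is forced to place $A$ in the function layer as the graph of some $\alpha$, after which the equation layers read off exactly the system \eqref{aeij}. This is precisely where the ``with subsets'' feature is indispensable. I would first normalise by embedding $H$ into $H \times \Z/2\Z$ and adjoining a harmless equation that pins the new coordinate to $0$ (e.g.\ $\hat\alpha(x) + (H \times \{0\}) = H \times \{0\}$, whose target is a proper subset); this makes every equation target a \emph{proper} subset of the digit group. Consequently the full column $\{0\} \times H$ translated by any tile-origin, which must remain inside the region, cannot fit in an equation layer, and so every origin is forced into the function layer. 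With all origins confined there, exact covering of the (full) function-layer slice forces exactly one origin per $G$-fibre, identifying $A$ with a graph, while each equation-$i$ piece lands in its own layer $i$ (no wraparound, as $L$ exceeds the number of layers), so that layer precisely encodes the $i$-th equation; the remaining bookkeeping is then routine.

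Finally, (ii) is immediate from (i): by Definition \ref{expressible-def}(iii) a $(G,(H_u)_{u \in \mathcal U})$-property \emph{is} the corresponding $(G,\prod_{u} H_u)$-property and is expressible exactly when the latter is, so one applies (i) with $H \coloneqq \prod_{u} H_u$. For (iii), a weakly expressible $P$ is by Definition \ref{expressible-def}(iv) the existential quantification of an expressible property $P^*$ over an enlarged index set; since existential quantification renders $P$ satisfiable if and only if $P^*$ is satisfiable, the triple $(H',E',F)$ produced by applying (ii) to $P^*$ satisfies ``$F$ tiles $E'$ iff $P^*$ is satisfiable iff $P$ is satisfiable'', which is the desired conclusion.
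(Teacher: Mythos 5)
Your proposal is correct, and the first half of it coincides with the paper's argument: the paper likewise encodes $\alpha$ by its graph $A=\{(x,\alpha(x)):x\in G\}$ and observes that $P$ is equivalent to the system consisting of the ``graph'' equation $A\oplus(\{0\}\times H)=G\times H$ together with $A \oplus \biguplus_{j}\{-h_{i,j}\}\times E_{i,j}=G\times E'_i$ for each $i$, and deduces (ii) and (iii) from (i) exactly as you do. Where you diverge is the final step: the paper merges this system of tiling equations into a single monotiling equation by invoking \cite[Theorem 1.15]{GT21} as a black box, whereas you carry out the merge explicitly via the $\Z/L\Z$-layered digit group $H'=H\times\Z/L\Z$, with the full column in layer $0$ and the $i$-th equation's pieces in layer $i$. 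Your rigidity argument for the converse direction is the right one, and you correctly isolate the one genuine subtlety: one must first normalize (e.g.\ by passing to $H\times\Z/2\Z$ and pinning the extra coordinate) so that every equation target is a \emph{proper} subset of the digit group, which is what forces every tile origin into the function layer and hence forces $A$ to be a graph. The trade-off is the usual one: the paper's route is shorter and reuses established machinery, while yours is self-contained and makes transparent exactly where the ``with subsets'' enhancement of the tiling language is doing work. Two small points to tidy in a final write-up: take $L\geq M+1$ and declare the slices of $E'$ over layers $M+1,\dots,L-1$ empty (so that no translate can place its column there either), and note explicitly that the layer-$0$ piece of a translate $(a,b,c)+F$ covers the whole fibre $\{a\}\times H\times\{c\}$ independently of $b$, so exact covering of layer $0$ pins down a unique $b=\alpha(a)$ per $a$.
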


\begin{proof}  We begin with (i), which is a variant of \cite[Theorem 4.1]{GT22}.  By Definition \ref{expressible-def}(ii), the property $P$ is equivalent to the system \eqref{aeij} for some suitable data $M, J_i, E'_i, h_{i,j}, E_{i,j}$.  One can then check (as in the proof of \cite[Theorem 4.1]{GT22}) that if $\alpha \colon G \to H$ obeys $P$, then the graph
\begin{equation}\label{graph}
A \coloneqq \{ (x, \alpha(x)): x \in G \} \subset G \times H
\end{equation}
obeys the system of tiling equations
\begin{equation}\label{agh}
 A \oplus (\{0\} \times H) = G \times H
 \end{equation}
and
\begin{equation}\label{agh-2}
A \oplus \biguplus_{j=1}^{J_i} \{-h_{i,j}\} \times E_{i,j} = G \times E'_i \text{ for all } i=1,\dots,M.
\end{equation}
Conversely, if $A \subset G \times H$ is a set that obeys the system \eqref{agh}, \eqref{agh-2}, then $A$ is a graph \eqref{graph} of some function $\alpha \colon G \to H$ (thanks to \eqref{agh}), and $\alpha$ will obey $P$ (thanks to \eqref{agh-2} and \eqref{aeij}).  To conclude (i), one can then invoke \cite[Theorem 1.15]{GT21} to merge together (via a finite time construction) this system of tiling equations into a single tiling equation for a subset $G \times E'$ of some group $G \times H'$, in such a way that the latter equation is solvable if and only if the former system is solvable.

The claim (ii) is immediate from (i) after re-interpreting $P$ as a $(G,\prod_{u \in {\mathcal U}} H_u)$-property.  Finally, to derive (iii) from (ii), observe that if $P$ is the existential quantification of some property $P^*$, then $P$ is satisfiable if and only if $P^*$ is.
\end{proof}

In view of this proposition, we now see that Theorem \ref{sudoku-to-monotile} is reduced to the following claim.

\begin{theorem}[Encoding Sudoku puzzles as weakly expressible properties]\label{sudoku-to-expressible}  Suppose one is given a Sudoku rule $\cS$ and an initial condition $\cC$.  Then one can generate (in finite time) a finite Abelian group $H_1$, and a weakly expressible $(\Z^2 \times \Z/2\Z, H_1)$-property $P$, such that the $(\cS, \cC)$ Sudoku puzzle is solvable if and only if $P$ is satisfiable.  Furthermore, the weak expressibility is constructive in the sense that the system of functional equations that witness the weak expressibility of $P$ can also be generated in finite time from $\cS$ and $\cC$.
\end{theorem}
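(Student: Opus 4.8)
The plan is to realize a candidate Sudoku solution together with several auxiliary bookkeeping functions as a single map $\alpha\colon \Z^2\times\Z/2\Z\to H_1$ into a finite abelian group $H_1$, and to transcribe each of the three defining features of the $(\cS,\cC)$-puzzle --- the width-$N$ board, the line rule $\cS$, and the initial condition $\cC$ --- into the language of functional equations with subsets of Definition~\ref{expressible-def}, using the library of (weakly) expressible properties from \cite[Section 4]{GT22}. Here $H_1$ will be a product of finite abelian groups large enough to host the digit set $\Sigma$, a column value in $\Z/N\Z$, a permutation value in $\Z/q\Z$, and the register alphabet introduced below. Since the conjunction of finitely many expressible properties is again expressible (concatenate the equation lists), and since existential quantification over the auxiliary functions turns expressibility into weak expressibility, it suffices to produce for each feature a finite system of functional equations with a fixed finite set of shifts, all generated in finite time from $\cS$ and $\cC$; the only ``visible'' function is the Sudoku solution $F\colon\mathbb{B}\to\Sigma$, and every other function is to be existentially quantified away.

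First I would encode the board. Because \eqref{aeij} is translation invariant, one cannot directly single out the strip $\{0,\dots,N-1\}\times\Z$; instead I make the first coordinate periodic of period $N$ and introduce a column-counter $c$ valued in $\Z/N\Z$ with $c(x+(1,0))=c(x)+1$, which pins down $c(n,m)=n\bmod N$ and allows the $N$ distinguished columns, and in particular the boundary columns $c=0$ and $c=N-1$, to be detected in a translation-invariant way. The freedom to use proper subsets $E'_i\neq H_1$ (cf.\ Remark~\ref{rem:persub}), afforded by passing to periodic subsets, is precisely what makes this strip-with-boundary encoding available, and the auxiliary $\Z/2\Z$ factor provides the extra room needed to implement it.

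Next I would encode the initial condition $\cC$, which is the most transparent step. I introduce an auxiliary function $s$ valued in $\Z/q\Z$, intended to realize $s(n,m)=\sigma_n(\pi_q(m))$, and impose three constraints: the pair $(s,F)$ takes values in $\cC\subset(\Z/q\Z)\times\Sigma$ (a pointwise value restriction); $s$ is $q$-periodic in the $m$-direction, $s(x+(0,q))=s(x)$; and, for each column, the $q$ consecutive values of $s$ partition $\Z/q\Z$, which I write as the disjoint-union equation
\[
\biguplus_{r=0}^{q-1}\bigl(s(x+(0,r))+\{0\}\bigr)=\Z/q\Z\qquad\text{for all }x .
\]
Together these force $m\mapsto s(n,m)$ to be a $q$-periodic permutation on each column, so that $\cC$ holds exactly when such an $s$ exists; existential quantification over $s$ therefore captures obedience to $\cC$.

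The main work, and the principal obstacle, is encoding the line rule: for every slope $j$ and intercept $i$ the tuple $(F(n,jn+i))_{n=0}^{N-1}$ must lie in $\cS$. Two structural difficulties appear. First, consecutive cells of a slope-$j$ line differ by the shift $(1,j)$, whose vertical component is unbounded in $j$, whereas a single system \eqref{aeij} admits only finitely many fixed shifts; and one cannot reduce to finitely many slopes, since for a general $\cS$ the slope-$j$ condition is not implied by the conditions for boundedly many slopes. Second, any device that reads off the $N$ cells one column at a time must enforce that the successive vertical increments are \emph{all equal} --- so that a genuine straight line, and not a zig-zag, is tested --- yet the common increment $j$ cannot be stored in a finite register. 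My plan is to verify each line by an accumulating prefix register, valued in the finite set $\Sigma^{\le N}$ of partial tuples (equivalently, in the states of a finite automaton recognizing $\cS$), reset at the columns $c=0$, appended to as the read advances one column, and required to lie in $\cS$ at $c=N-1$; the unbounded vertical advance is to be realized not in one jump but by propagating register data one cell at a time in the $m$-direction, with the counter $c$ segmenting each line into a single period and synchronizing the horizontal steps. Reconciling the bounded-shift, finite-memory constraints with the constant-slope and unbounded-range requirements --- so that exactly the straight lines of each fixed slope become the objects verified --- is the technical heart of the argument and the genuine generalization of the Sudoku encoding of \cite{GT22}, where only the special $p$-adic rule had to be accommodated. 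I would then take the conjunction of the board, initial-condition and line-rule systems, existentially quantify every auxiliary function ($c$, $s$, and the registers) to obtain a weakly expressible $(\Z^2\times\Z/2\Z,H_1)$-property $P$, and check that $P$ is satisfiable if and only if a $\cS$-Sudoku solution obeying $\cC$ exists, with all equations manifestly produced in finite time from $\cS$ and $\cC$.
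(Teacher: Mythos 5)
Your treatment of the initial condition (an existentially quantified $q$-periodic permutation function $s$ with a disjoint-union partition equation) and your use of conjunction plus existential quantification to assemble the final weakly expressible property are both consistent with what the paper does. But the core of your argument --- the encoding of the line rule --- is left as an acknowledged open step, and the mechanism you sketch for it cannot work. An ``accumulating prefix register'' attached to the cells of the board would have to serve every non-vertical line simultaneously; since each cell $(n,m)$ lies on infinitely many lines $\ell_{j,m-jn}$, one for each slope $j\in\Z$, a register valued in the finite set $\Sigma^{\le N}$ cannot carry the prefix data of all of them at once. Worse, verifying that the vertical increments between consecutive columns of the line being read are all equal to the same unbounded integer $j$ is exactly the kind of counting that a finite-state device propagating information through a fixed finite set of shifts cannot perform; this is not a technical wrinkle to be smoothed over later but the reason a cell-based register scheme fails for a general rule $\cS$.

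The paper's resolution is a change of coordinates that your outline does not consider: the two coordinates $(i,j)$ of the ambient group $\Z^2 \times \Z/2\Z$ are interpreted as the \emph{intercept and slope} of a non-vertical line, while the column index $n\in\{1,\dots,N\}$ is absorbed into the index set of a tuple of $2s_0N$ functions $\alpha_{a,b,n}$, so no column counter is needed and the width-$N$ strip never has to be carved out of $\Z^2$. In these coordinates the entire line rule (together with the initial condition) becomes a \emph{pointwise} membership constraint on the tuple $(\alpha_{a,b,n}((i,j),t))_{a,b,n}$, handled by Lemma~\ref{boolean-express}, and the consistency of the values read off along the different lines through a single cell, via the identity $(n,jn+i)=(n,(j+1)n+(i-n))$, is enforced by requiring each $\alpha_{a,b,n}$ to be $((-n,1),0)$-periodic, which is expressible by Lemma~\ref{express-period}. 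This reparametrization is the key idea of the proof of Theorem~\ref{sudoku-to-expressible}; without it, or a genuine substitute for it, your outline does not yield the theorem.
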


The advantage of working with weakly expressive properties, as opposed to tiling equations, is that the class of such properties obey a number of useful closure properties.  We recall a definition from \cite[Definition 4.15, 4.18]{GT22}:

\begin{definition}[Lift]\label{lift}
Given a finitely generated Abelian group $G$, a tuple of finite Abelian groups $(H_u)_{u \in \mathcal{U}}$ indexed by a finite set $\mathcal{U}$, a subset $\mathcal{U}_1$ of $\mathcal{U}$,  and a $(G, (H_u)_{u \in \mathcal{U}_1})$-property $P_1$, we define the \emph{lift} of $P_1$ to $(G, (H_u)_{u \in \mathcal{U}})$ to be the $(G, (H_u)_{u \in \mathcal{U}})$-property $P$, defined by requiring a $(G, (H_u)_{u \in \mathcal{U}})$-function $(\alpha_u)_{u \in \mathcal{U}}$ to obey $P$ if and only if the $(G, (H_u)_{u \in \mathcal{U}_1})$-function $(\alpha_u)_{u \in \mathcal{U}_1}$ obeys $P_1$.  
\end{definition}

Informally, applying a lift to a property adds some additional ``dummy'' functions $\alpha_u$, $u \in {\mathcal U} \backslash {\mathcal U}_1$.  We refer to \cite[Section 4]{GT22} for examples and discussion of this operation.

We then have

\begin{lemma}[Closure properties of expressibility and weak expressibility]\label{closure}\ 
\begin{itemize}
    \item[(i)]  Any lift of an expressible (resp. weakly expressible) property is also expressible (resp. weakly expressible).
    \item[(ii)]  The conjunction $P \wedge P'$ of two expressible (resp. weakly expressible) $(G, (H_u)_{u \in \mathcal{U}})$-properties is also expressible (resp. weakly expressible).
    \item[(iii)] Any existential quantification of a weakly expressible property is weakly expressible.
\end{itemize}
Furthermore, if the (weak) expressibility of the properties in the hypotheses is constructive, then so is the (weak) expressibility of the properties in the conclusion.
\end{lemma}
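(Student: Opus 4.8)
The plan is to reduce every claim to the case of \emph{expressible} properties, exploiting the fact (Definition \ref{expressible-def}(iv)) that a weakly expressible property is by definition a single existential quantification of an expressible one, and that the three operations in the statement interact cleanly with existential quantification. The only step carrying genuine content about the functional-equation language is the lift in (i) for expressible properties; everything else is bookkeeping with index sets.

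For (i) in the expressible case, suppose $P_1$ is an expressible $(G,(H_u)_{u\in\mathcal U_1})$-property, i.e.\ the associated $(G,H^{(1)})$-property with $H^{(1)}\coloneqq\prod_{u\in\mathcal U_1}H_u$ is cut out by a system \eqref{aeij} with data $M,J_i,E_i',h_{i,j},E_{i,j}$, the subsets lying in $H^{(1)}$. Writing $H\coloneqq\prod_{u\in\mathcal U}H_u$ and letting $\pi\colon H\to H^{(1)}$ be the coordinate projection, I would lift the system by replacing each subset with its full preimage, $\tilde E_{i,j}\coloneqq\pi^{-1}(E_{i,j})$ and $\tilde E_i'\coloneqq\pi^{-1}(E_i')$. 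For $\alpha\colon G\to H$ with $\beta\coloneqq\pi\circ\alpha$, the key identity to verify is
$$ \alpha(x+h_{i,j})+\tilde E_{i,j} \;=\; \bigl(\beta(x+h_{i,j})+E_{i,j}\bigr)\times\prod_{u\in\mathcal U\setminus\mathcal U_1}H_u, $$
since $\tilde E_{i,j}$ is a product containing the whole complementary factor. The main obstacle is to check that the \emph{disjoint}-union structure transfers: because all lifted sets share the common nonempty complementary factor $\prod_{u\notin\mathcal U_1}H_u$, two of them are disjoint iff their $H^{(1)}$-components are disjoint, and their union projects correctly onto the union in $H^{(1)}$; hence the lifted system at $x$ holds iff the original system at $x$ holds for $\beta$. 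Thus the lift is expressible, and the construction is plainly finite-time.

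For (ii) in the expressible case, I would view $P,P'$ as $(G,H)$-properties and simply concatenate their two systems of functional equations into one system of $M+M'$ equations, which is satisfied exactly when both are; constructivity is immediate. To pass to the weakly expressible versions of (i) and (ii), I relabel the auxiliary index sets so they are pairwise disjoint, lift each starred (expressible) property to the common index set using the expressible case of (i), form the expressible lift (resp.\ conjunction) there, and then existentially quantify out the auxiliary variables. The point to check is that because the starred properties depend on disjoint blocks of variables, the single existential quantifier over the combined auxiliary set factors into the separate quantifiers, reproducing exactly the lift (resp.\ conjunction) of the weakly expressible inputs.

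Finally, (iii) follows from the observation that a composition of two existential quantifications is again a single existential quantification over the union of the two quantified index sets. Since a weakly expressible $P$ is an existential quantification over some $\mathcal U_1^*$ of an expressible $P^*$, existentially quantifying $P$ further over a set $\mathcal U_0^*$ is the same as existentially quantifying $P^*$ over $\mathcal U_0^*\uplus\mathcal U_1^*$, which is again weakly expressible; constructivity is preserved as we only concatenate index data. The main difficulty in the whole lemma is thus confined to the preservation of the disjoint-union structure in the lift step of (i); everything else is formal manipulation of existential quantifiers and of finite systems of functional equations.
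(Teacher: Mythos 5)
Your proposal is correct. The paper gives no written-out argument here: it simply cites \cite[Lemma 4.20]{GT22} and asserts that the proof extends routinely to the language with subsets $E'_i$; your argument is exactly that routine verification, carried out in the standard way (lifting a system by taking preimages of the sets $E_{i,j}, E'_i$ under the coordinate projection, concatenating systems for conjunctions, and collapsing iterated existential quantifiers). You also correctly isolate the one point where the subset-enhanced language actually matters --- that disjointness of the translates is preserved under the lift because all lifted sets carry the same nonempty complementary factor $\prod_{u \in \mathcal{U} \setminus \mathcal{U}_1} H_u$ --- so nothing is missing.
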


\begin{proof} This is\footnote{This lemma also included a closure property involving a ``pullback'' operation which we will not use here.} \cite[Lemma 4.20]{GT22}, generalized to the setting in which subsets $E'_i$ are permitted in the functional equations \eqref{aeij}; it is a routine matter to verify that the proof of that lemma extends to this setting with the obvious modifications, and that the arguments preserve the constructive nature of the expressibility.
\end{proof}

\begin{example}  Let $P$ be a weakly expressible $(G,H)$-property, and $P'$ an expressible $(G,H')$ property.  Then, using $(\alpha,\alpha')$ to refer to a tuple of functions $\alpha \colon G \to H$ and $\alpha' \colon G \to H'$, then Lemma \ref{closure} implies that $(G,(H,H'))$-property of $\alpha$ obeying $P$ is weakly expressible thanks to Lemma \ref{closure}(i), and similarly the $(G,(H,H'))$-property of $\alpha'$ obeying $P'$ is expressible.  By Lemma \ref{closure}(ii), we conclude that the $(G,(H,H'))$-property of $\alpha$ obeying $P$ \emph{and} $\alpha'$ obeying $P'$ is also weakly expressible.  This type of combination of Lemma \ref{closure}(i) and Lemma \ref{closure}(ii) will be frequently used in the arguments below.
\end{example}

\subsection{A library of weakly expressible properties}

Similarly to \cite[Sections 5, 6, 7]{GT22}, we now build up a library of useful expressible or weakly expressible properties.

\begin{lemma}[Expressing periodicity]\label{express-period}  Let $G$ be a finitely generated Abelian group, let $H$ be a finite Abelian group, and let $G'$ be a subgroup of $G$.  Then the $(G,H)$-property that a $(G,H)$-function $\alpha$ is $G'$-periodic in the sense that $\alpha(x+h) = \alpha(x)$ for all $x \in G$ and $h \in G'$, is expressible (in a constructive fashion).
\end{lemma}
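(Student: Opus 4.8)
The plan is to reduce $G'$-periodicity to finitely many single-shift equalities and to encode each such equality as a single functional equation of the form \eqref{aeij}. First I would use that since $G$ is a finitely generated Abelian group, its subgroup $G'$ is again finitely generated; fix a finite generating set $g_1,\dots,g_k$ of $G'$, which can be computed in finite time from the data describing $G'$. The key reduction is that a function $\alpha\colon G\to H$ is $G'$-periodic if and only if $\alpha(x+g_s)=\alpha(x)$ for all $x\in G$ and all $s=1,\dots,k$: invariance under each generator propagates to the whole subgroup they generate by iteration (using that the identity holds for \emph{every} $x$, which also yields invariance under $-g_s$), and the converse is trivial.

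Next I would express a single-shift equality $\alpha(x+g)=\alpha(x)$, for a fixed $g\in G$, as one instance of \eqref{aeij}. Writing $a\coloneqq\alpha(x)$ and $b\coloneqq\alpha(x+g)$, the claim is that the two-term equation
$$ (\alpha(x)+\{0\}) \uplus (\alpha(x+g)+(H\setminus\{0\})) = H $$
holds (for a given $x$) if and only if $a=b$. Indeed $b+(H\setminus\{0\})=H\setminus\{b\}$, so the left-hand side is the disjoint union $\{a\}\uplus(H\setminus\{b\})$; by the convention on $\uplus$ this is defined and equal to $H$ exactly when $\{a\}$ and $H\setminus\{b\}$ are disjoint, i.e. $a\notin H\setminus\{b\}$, i.e. $a=b$ (disjointness already forces the equality, after which covering of $H$ is automatic). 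Thus the single functional equation with $J=2$, shifts $h_1=0$, $h_2=g$, inner sets $E_1=\{0\}$ and $E_2=H\setminus\{0\}$, and right-hand side $E'=H$ expresses precisely the property $\alpha(x+g)=\alpha(x)$ for all $x\in G$.

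Finally I would assemble these pieces. Applying the previous paragraph to each generator $g_s$ produces, for each $s$, an expressible property; taking $M=k$ and listing the $k$ resulting equations as a single system \eqref{aeij} (equivalently, invoking the conjunction closure of Lemma~\ref{closure}(ii)) expresses their conjunction, which by the first paragraph is exactly $G'$-periodicity. All the data — the number $k$, the shifts $0$ and $g_s$, and the sets $\{0\}$, $H\setminus\{0\}$, $H$ — are produced explicitly from a generating set of $G'$, so the expressibility is constructive.

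I expect the only genuinely substantive point to be the single-shift encoding in the second step: the choice of inner sets $\{0\}$ and $H\setminus\{0\}$ is what makes the disjoint-union constraint collapse to the desired equality for an \emph{arbitrary} finite Abelian group $H$. Note that this construction uses only the whole group $H$ on the right-hand side of \eqref{aeij}, so it in fact lies within the more restrictive language of \cite{GT22} (cf. Remark~\ref{rem:persub}). The reduction to generators is routine but relies on the finite generation of subgroups of finitely generated Abelian groups.
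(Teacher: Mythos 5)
Your proposal is correct and is essentially the paper's own proof: the paper likewise reduces to a generating set $h_1,\dots,h_k$ of $G'$ and writes, for each generator, the single functional equation $(\alpha(x+h_i)+\{0\}) \uplus (\alpha(x)+(H\setminus\{0\})) = H$, which is your equation with the two terms interchanged. Your verification that the disjoint-union constraint forces $\alpha(x+g)=\alpha(x)$, and your observation that only $E'=H$ is needed on the right-hand side (so the argument already lives in the more restrictive language of \cite{GT22}, consistent with the paper's citation of \cite[Corollary 5.4]{GT22}), are both accurate.
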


\begin{proof}  See \cite[Corollary 5.4]{GT22}.  Alternatively, for $h_1,\dots,h_k$ a set of generators for $G'$, the periodicity can be expressed as a system of functional equations
$$ (\alpha(x+h_i) + \{0\}) \uplus (\alpha (x) + (H \backslash \{0\})) = H$$
for all $x\in G$ and $i=1,\dots,k$, thus making the expressibility immediate.
\end{proof}

\begin{lemma}[Expressing linear constraints]\label{express-linear} (cf. \cite[Corollary 5.5]{GT22}, \cite[Section 6]{GT21}) Let $G$ be a finitely generated Abelian group, let $\Z/L\Z$ be a cyclic group, and let $c_1,\dots,c_U \in \Z/L\Z$ be coefficients.  Then the $(G,(\Z/L\Z)_{u=1,\dots,U})$-property of a tuple $\alpha_1,\dots,\alpha_U \colon G \to \Z/L\Z$ of functions obeying the linear relation
\begin{equation}\label{ca}
c_1 \alpha_1(x) + \dots + c_U \alpha_U(x) = 0
\end{equation}
for all $x \in G$, is expressible (in a constructive fashion).
\end{lemma}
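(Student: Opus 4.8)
The plan is to recast the single linear relation as a pointwise membership in a subgroup, and then to express that membership by one functional equation of the form \eqref{aeij}. First, invoking Definition \ref{expressible-def}(iii), I would pass from the $(G,(\Z/L\Z)_{u=1,\dots,U})$-property to the corresponding $(G,H)$-property of the single function $\alpha = (\alpha_1,\dots,\alpha_U)\colon G \to H$, where $H \coloneqq (\Z/L\Z)^U$; it then suffices to show that this $(G,H)$-property is expressible.

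Next I would introduce the group homomorphism $\phi \colon H \to \Z/L\Z$ given by $\phi(v_1,\dots,v_U) \coloneqq c_1 v_1 + \dots + c_U v_U$ and set $K \coloneqq \ker \phi$, a subgroup of $H$. The linear relation \eqref{ca} holds at a point $x$ exactly when $\phi(\alpha(x)) = 0$, i.e.\ when $\alpha(x) \in K$; hence the property to be expressed is precisely that $\alpha(x) \in K$ for all $x \in G$. The subgroup $K$ can be computed explicitly from $c_1,\dots,c_U$ and $L$, which is what will give the constructive nature of the expressibility.

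Finally, I would observe that membership in a subgroup is encoded simply by equating a coset with the subgroup itself: the functional equation
$$\alpha(x) + K = K \qquad \text{for all } x \in G$$
is of the form \eqref{aeij} (with $M=1$, $J_1 = 1$, $h_{1,1} = 0$, $E_{1,1} = K$, and $E'_1 = K$), and since $K$ is a subgroup one has $\alpha(x) + K = K$ if and only if $\alpha(x) \in K$. This yields the claimed equivalence. There is no substantial obstacle here; the only point to get right is this reformulation of the constraint as a kernel membership, together with the recognition that such membership is directly expressible (contrast with \eqref{ca} being a priori a relation among several functions rather than a single tiling-type equation). If one prefers a genuine partition in place of the one-term disjoint union, one may instead fix any decomposition $K = K_1 \uplus K_2$ and use $(\alpha(x) + K_1) \uplus (\alpha(x) + K_2) = K$, which holds if and only if $\alpha(x) + K = K$, again equivalent to $\alpha(x) \in K$; this mirrors the template used in Lemma \ref{express-period}.
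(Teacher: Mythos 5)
Your proposal is correct and takes essentially the same route as the paper: the paper's proof also rewrites \eqref{ca} as the single functional equation $(\alpha_1(x),\dots,\alpha_U(x)) + E = E$, where $E$ is exactly your kernel $K = \{(a_1,\dots,a_U) : c_1a_1+\dots+c_Ua_U = 0\}$. Your additional remarks (the explicit choice of data $M, J_1, h_{1,1}, E_{1,1}, E'_1$ and the observation that a coset equals the subgroup iff the shift lies in it) just spell out what the paper leaves implicit.
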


\begin{proof}  The condition \eqref{ca} is equivalent to the condition
$$ (\alpha_1(x),\dots,\alpha_U(x)) + E = E$$
where $E \subset (\Z/L\Z)^U$ is the subspace
$$ E \coloneqq \{ (a_1,\dots,a_U) \in (\Z/L\Z)^U : c_1 a_1 + \dots + c_U a_U = 0 \}.$$
The expressibility then follows.
\end{proof}

\begin{definition}[Boolean function] (cf. \cite[Definition 6.1]{GT22}) Let $G$ be a finitely generated Abelian group, let $e$ be an element of $G$ of order $2$, and let $\Z/L\Z$ be a cyclic group for some $L > 2$. A function $\alpha \colon G \to \Z/L\Z$ is \emph{$e$-boolean} if it takes values in $\{-1,+1\}$ (viewing $-1, +1$ as elements of $\Z/L\Z$, and furthermore obeys the alternating property
\begin{equation}\label{alternating}
\alpha(x + e) = - \alpha(x)
\end{equation}
for all $x \in G$.
\end{definition}

\begin{lemma}[$e$-boolean functions are expressible]\label{bool-function}  If $G, e, L$ are as in the above definition, then the $(G, \Z/L\Z)$-property of being an $e$-boolean function is expressible (in a constructive fashion).
\end{lemma}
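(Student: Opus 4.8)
The plan is to capture both defining conditions of an $e$-boolean function --- that $\alpha$ takes values in $\{-1,+1\}$ and that $\alpha(x+e)=-\alpha(x)$ --- with a single functional equation of the form \eqref{aeij}. The key observation is that, since the hypothesis $L>2$ ensures $-1 \neq +1$ in $\Z/L\Z$, a function $\alpha \colon G \to \Z/L\Z$ is $e$-boolean if and only if
$$\{ \alpha(x), \alpha(x+e) \} = \{-1,+1\}$$
as two-element subsets of $\Z/L\Z$, for every $x \in G$. Indeed, if $\alpha$ is $e$-boolean then $\alpha(x) \in \{-1,+1\}$ and $\alpha(x+e) = -\alpha(x)$ is the other element, so the set on the left is exactly $\{-1,+1\}$; conversely, if the displayed identity holds for all $x$, then in particular $\alpha(x) \in \{-1,+1\}$ for every $x$, and the two \emph{distinct} values $\alpha(x), \alpha(x+e)$ jointly exhausting $\{-1,+1\}$ forces $\alpha(x+e) = -\alpha(x)$.

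First I would rewrite this set identity in the language of disjoint unions. The condition that $\alpha(x)$ and $\alpha(x+e)$ are distinct and together equal $\{-1,+1\}$ is precisely the statement that the disjoint union
$$(\alpha(x) + \{0\}) \uplus (\alpha(x+e) + \{0\}) = \{-1,+1\}$$
is well-defined and holds for all $x \in G$ --- recall that $A \uplus B$ is defined only when $A,B$ are disjoint, so well-definedness already encodes $\alpha(x) \neq \alpha(x+e)$. This is exactly a system of the shape \eqref{aeij} with $M=1$, $J_1=2$, shifts $h_{1,1}=0$ and $h_{1,2}=e$, subsets $E_{1,1}=E_{1,2}=\{0\}$, and right-hand side $E'_1 = \{-1,+1\} \subset \Z/L\Z$. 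Since all of this data is produced directly from $G, e, L$ by a finite explicit recipe, the expressibility is constructive in the required sense.

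There is essentially no hard step here; the only point requiring care is the verification that the single disjoint-union equation is equivalent to the conjunction of the two conditions in the definition, and, in doing so, noting exactly where $L>2$ is used --- namely, to guarantee that $\{-1,+1\}$ genuinely has two elements, so that a disjoint union of two singletons can equal it. (Were $L=2$, the right-hand side would be the singleton $\{1\}$ and the equation would be unsatisfiable, consistent with the fact that the alternating property \eqref{alternating} is vacuous when $-1=+1$.) This encoding mirrors the treatment of alternating and permutation-type conditions in \cite[Section 6]{GT22}.
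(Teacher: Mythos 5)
Your proof is correct and is essentially identical to the paper's: the paper's own argument consists precisely of observing that $\alpha$ is $e$-boolean if and only if $(\alpha(x+e) + \{0\}) \uplus (\alpha(x) + \{0\}) = \{-1,+1\}$ for all $x \in G$, which is the single functional equation of the form \eqref{aeij} that you wrote down. Your additional verification of the equivalence and the remark on where $L>2$ is used are accurate elaborations of what the paper leaves implicit.
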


\begin{proof}
    Observe that a function $\alpha \colon G \to \Z/L\Z$ obeys the functional equation
    $$ (\alpha(x+e) + \{0\}) \uplus (\alpha(x) + \{0\}) = \{-1,+1\}$$
     if and only if $\alpha$ is $e$-boolean.  The claim follows.
\end{proof}

\begin{lemma}[Boolean constraints are expressible]\label{boolean-express} (cf. \cite[Proposition 6.6]{GT22} and \cite[Section 6]{GT21})  Let $G$ be a finitely generated Abelian group, let $e$ be an element of $G$ of order $2$, let $U \geq 1$, let $\Omega \subset \{-1,1\}^U$ obey the symmetry condition $-\Omega = \Omega$, and let $\Z/L\Z$ be a cyclic group for some $L > 2U+4$.  Then the $(G, (\Z/L\Z)_{u=1,\dots,U})$ property of a tuple of functions $\alpha_u \colon G \to \Z/L\Z$, $u=1,\dots,U$ being $e$-boolean and obeying the constraint
$$ (\alpha_1(x),\dots,\alpha_U(x)) \in \Omega \text{ for all } x \in G$$
is weakly expressible.
\end{lemma}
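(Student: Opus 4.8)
The plan is to realize the desired property as a conjunction of one expressible part and several weakly expressible parts, and then invoke the closure properties of \ref{closure}. The requirement that each $\alpha_u$ be $e$-boolean is already expressible by \ref{bool-function}, and \ref{closure}(i),(ii) let me conjoin the $U$ copies, so the entire difficulty is the membership constraint $(\alpha_1(x),\dots,\alpha_U(x)) \in \Omega$. Since the $\alpha_u$ are forced to take values in $\{-1,+1\}$, this is equivalent to forbidding, for each pattern $\omega$ in the complement $\{-1,1\}^U \setminus \Omega$, the event $(\alpha_1(x),\dots,\alpha_U(x)) = \omega$. The symmetry hypothesis $-\Omega = \Omega$ is exactly what makes this consistent with the alternating relation $\alpha_u(x+e) = -\alpha_u(x)$: forbidding $\omega$ automatically forbids $-\omega$, and the two must be forbidden together. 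As there are only finitely many such $\omega$, by \ref{closure}(i),(ii) it suffices to exhibit, for a single pattern $\omega \in \{-1,1\}^U$, a weakly expressible property asserting that $(\alpha_1(x),\dots,\alpha_U(x)) \neq \pm\omega$ for all $x$.

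To handle one pattern I would pass to a scalar statistic. Observe that $(\alpha_1(x),\dots,\alpha_U(x)) \in \{\omega, -\omega\}$ precisely when the weighted sum $\Sigma(x) \coloneqq \sum_{u=1}^U \omega_u \alpha_u(x)$ attains one of its extreme values $\pm U$. I would introduce $\Sigma$ as an auxiliary function (to be existentially quantified) and pin it down by the linear relation $\Sigma(x) - \sum_u \omega_u \alpha_u(x) = 0$, which is expressible by \ref{express-linear}. Because each $\alpha_u$ is $e$-boolean, $\Sigma$ is $e$-odd, i.e. $\Sigma(x+e) = -\Sigma(x)$, and takes values only in the arithmetic progression $\{-U, -U+2, \dots, U\}$ of $U+1$ points; the largeness hypothesis $L > 2U+4 > 2U$ guarantees that these points are pairwise distinct in $\Z/L\Z$, with a few units of room to spare. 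Forbidding the pattern thus reduces to the single-variable statement that $\Sigma(x) \notin \{U,-U\}$ for all $x$; by $e$-oddness (if $\Sigma(x_0)=-U$ then $\Sigma(x_0+e)=U$) it is in fact enough to forbid the single value $U$, and this is equivalent to enforcing $|\Sigma(x)| \le U-2$ for all $x$.

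The crux is to express this last avoidance by a system of functional equations with subsets. This is precisely the point at which the flexibility of allowing a proper subset $E'_i \subsetneq H$ on the right-hand side of \eqref{aeij} (the feature absent from \cite{GT22} and emphasised in \ref{rem:persub}) becomes essential: with the full group as target, translation invariance makes every value of $\Sigma(x)$ interchangeable and no single value can be singled out, whereas a proper subset breaks this symmetry. Concretely I would take the target $E'$ to be a suitable interval (arithmetic progression) in $\Z/L\Z$ and exploit the $e$-odd structure, which places both $\Sigma(x)$ and $-\Sigma(x) = \Sigma(x+e)$ at my disposal within a single equation, designing the disjoint-union equation so that it is solvable (for appropriate values of the existentially quantified auxiliaries) exactly when $|\Sigma(x)| \le U - 2$; the hypothesis $L > 2U+4$ is what keeps all the relevant elements inside one period of $\Z/L\Z$, preventing wrap-around collisions. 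This is an adaptation of the gadget in \cite[Proposition 6.6]{GT22} (cf. also \cite[Section 6]{GT21}) to the present with-subsets language. Having expressed the single-pattern constraint with the help of the auxiliary $\Sigma$, \ref{closure}(iii) yields a weakly expressible property, and assembling the conjunction over all forbidden patterns together with the $e$-boolean conditions completes the construction; the whole process is manifestly finite-time in $U$, $\Omega$ and $L$, hence constructive.

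I expect the genuine obstacle to be the gadget of the third paragraph. Carving out a non-affine condition such as $|\Sigma(x)| \le U-2$ out of disjoint-union equations is delicate: the naive attempt to let an auxiliary function fill the complement of $\{\Sigma(x)\}$ inside $E'$ fails, because that complement contains a hole whose position moves with $\Sigma(x)$ and so cannot be a fixed translate of any prescribed set. Overcoming this ``moving hole'' is exactly what forces the combined use of the alternating $e$-symmetry, the proper-subset targets, and the slack built into the condition $L > 2U+4$, and it is here that the argument of \cite[Proposition 6.6]{GT22} must be followed, and adapted, with care.
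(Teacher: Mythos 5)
Your reduction matches the paper's opening moves — decomposing $\Omega$ into an intersection of sets each forbidding a single symmetric pair $\pm(\epsilon_1,\dots,\epsilon_U)$, handling the $e$-boolean conditions via Lemma \ref{bool-function} and Lemma \ref{closure}, and recasting the single-pattern constraint as a bound $|\sum_u \epsilon_u \alpha_u(x)| \le U-2$ on a weighted sum. But the step you yourself flag as ``the genuine obstacle'' — actually expressing that bound — is left unconstructed, and it is precisely the content of the lemma. Your proposed route (take a proper-subset interval target $E'$ and design a disjoint-union equation certifying $\Sigma(x) \in \{-U+2,\dots,U-2\}$) runs into the ``moving hole'' problem you describe, and the paper does not solve it that way at all; no new subset-interval gadget appears in the proof.

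The missing idea is this: first pad $U$ by one or two dummy coordinates (absorbed by Lemma \ref{closure}(iii)) so that $U$ is odd with $U\ge 3$; then existentially quantify $U-2$ \emph{additional $e$-boolean functions} $\beta_1,\dots,\beta_{U-2}\colon G\to\Z/L\Z$ and impose the single linear relation
$$ \epsilon_1\alpha_1(x)+\dots+\epsilon_U\alpha_U(x) \;=\; \beta_1(x)+\dots+\beta_{U-2}(x), $$
which is expressible by Lemma \ref{express-linear} (and the booleanity of the $\beta_i$ by Lemma \ref{bool-function}). A sum of $U-2$ values in $\{-1,+1\}$ automatically has magnitude at most $U-2$, which gives one direction; conversely, since $U$ is odd the left-hand side is always an odd integer, and any odd integer of magnitude at most $U-2$ (with $U-2$ odd) is realizable as such a sum, so the $\beta_i$ exist exactly when the forbidden pattern is avoided. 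Thus the interval constraint is encoded entirely through the \emph{number} of auxiliary boolean summands rather than through a subset-target equation, and the parity bookkeeping is what forces the preliminary normalization of $U$ to be odd — a step absent from your proposal. Without this (or an honest construction of your interval gadget), the proof is incomplete.
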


\begin{proof}
By increasing $U$ by one or two if necessary (and relaxing $L > 2U+4$ to $L > 2U$) using Lemma \ref{closure}(iii), we may assume without loss of generality that $U$ is odd with $U \geq 3$.  The symmetric set $\Omega$ can be expressed as the intersection of a finite number of symmetric sets of the form
\begin{equation}\label{single}
 \{-1,+1\}^U \backslash \{ (\epsilon_1,\dots,\epsilon_U), (-\epsilon_1,\dots,-\epsilon_U) \}
\end{equation}
for some $\epsilon_1,\dots,\epsilon_U \in \{-1,+1\}$.  By Lemma \ref{closure}(ii), it thus suffices to verify the claim for $\Omega$ of the form \eqref{single}.  

We introduce some auxiliary functions $\beta_1,\dots,\beta_{U-2} \colon G \to \Z/L\Z$, and let $P^*_\Omega$ be the $(G, (\Z/L\Z)_{u=1,\dots,2U-2})$-property that a tuple $(\alpha_1,\dots,\alpha_U,\beta_1,\dots,\beta_{U-2})$ of functions from $G$ to $\Z/L\Z$ are $e$-boolean and obey the linear constraint
$$ \epsilon_1 \alpha_1(x)+\dots+ \epsilon_U \alpha_U(x) = \beta_1(x) + \dots + \beta_{U-2}(x) $$
    for all $x \in G$.

From Lemma \ref{express-linear}, Lemma \ref{bool-function} and Lemma \ref{closure}(ii), the property $P^*_\Omega$ is weakly expressible.  By Lemma \ref{closure}(iii), it thus suffices to show that $P_\Omega$ is the existential quantification of $P^*_\Omega$.

We first show that any tuple $(\alpha_1,\dots,\alpha_U)$ obeying $P_\Omega$ can be extended to a tuple $(\alpha_1,\dots,\alpha_U,\beta_1,\dots,\beta_{U-2})$ obeying $P^*_\Omega$.  By the property $P_\Omega$, for any $x \in G$, the expression $\epsilon_1 \alpha_1(x)+\dots+ \epsilon_U \alpha_U(x)$ will have magnitude at most $U-2$, and is odd since $U$ is odd.  Since $U-2$ is also odd, we conclude that this expression can be written in the form $\beta_1(x) + \dots + \beta_{U-2}(x)$ for some $\beta_1(x),\dots,\beta_{U-2}(x) \in \{-1,+1\}$.  The claim follows.  

Conversely, if $(\alpha_1,\dots,\alpha_U)$ has an extension obeying $P^*_\Omega$, then for any $x \in G$, $\epsilon_1 \alpha_1(x)+\dots+ \epsilon_U \alpha_U(x)$ has magnitude at most $U-2$, and hence $P_\Omega$ holds.  This gives the desired claim.  
\end{proof}

Now we express the property of being a periodized permutation.

\begin{lemma}[Periodized permutations are expressible]\label{permutation-express}  Let $U \geq 1$, let $\Z/L\Z$ be a cyclic group with $L > 2U+4$, and let $1 \leq q \leq 2^{U-1}$.  Let $$\iota \colon \Z/q\Z \to \{-1,1\}^{U}$$ be an injection such that $\iota(\Z/q\Z)$ and $-\iota(\Z/q\Z)$ are disjoint (this is possible since $q \leq 2^{U-1}$).  Define the $(\Z \times \Z/2\Z, (\Z/L\Z)_{1 \leq u \leq U})$-property $Q_{\iota}$ by declaring a tuple of functions $$\alpha_u \colon \Z \times \Z/2\Z \to \Z/L\Z,\quad  1\leq u \leq U$$ to obey $Q_{\iota}$ if they are $(0,1)$-Boolean, and there exists a permutation $\sigma \colon \Z/q\Z \to \Z/q\Z$ such that
\begin{equation}\label{eq:pp}
    (\alpha_1(n, t),\dots,\alpha_U(n, t))) = \pm \iota( \sigma( \pi_q(n) ) )
\end{equation} 
for all $(n,t) \in \Z \times \Z/2\Z$ (where we use $x = \pm y$ as shorthand for $x \in \{ y, -y\}$).  Then $Q_{\iota}$ is expressible (in a constructive fashion).
\end{lemma}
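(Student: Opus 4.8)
The plan is to realize $Q_\iota$ as the conjunction of two \emph{expressible} properties: the coordinatewise $e$-boolean property (which I call $P_{\mathrm{bool}}$, with $e=(0,1)$), and a single ``sliding-window'' functional equation $P_{\mathrm{perm}}$ that simultaneously forces membership in the symmetric set $\Omega := \iota(\Z/q\Z) \cup (-\iota(\Z/q\Z))$ and the permutation structure. Throughout I would identify the tuple $(\alpha_1,\dots,\alpha_U)$ with a single function $\alpha \colon \Z\times\Z/2\Z \to H$, where $H \coloneqq (\Z/L\Z)^U$, and view $\Omega \subset \{-1,+1\}^U \subset H$. Note that $|\Omega| = 2q$ \emph{precisely} because $\iota(\Z/q\Z)$ and $-\iota(\Z/q\Z)$ are disjoint, and that every $v \in \Omega$ has a well-defined \emph{content} $c(v) \in \Z/q\Z$, the unique element with $v \in \{\iota(c(v)),-\iota(c(v))\}$, which satisfies $c(-v)=c(v)$ and whose fibre in $\Omega$ has exactly two elements.

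First I would record that $P_{\mathrm{bool}}$ is expressible: each single-coordinate property ``$\alpha_u$ is $e$-boolean'' is expressible by Lemma \ref{bool-function}, and lifting these to $(\Z\times\Z/2\Z,H)$-properties and conjoining them preserves expressibility by Lemma \ref{closure}(i),(ii). Next, let $P_{\mathrm{perm}}$ be the $(\Z\times\Z/2\Z,H)$-property defined by the single functional equation
\[
\biguplus_{k=0}^{q-1}\bigl(\alpha(x+(k,0))+\{0\}\bigr)\ \uplus\ \biguplus_{k=0}^{q-1}\bigl(\alpha(x+(k,1))+\{0\}\bigr)\;=\;\Omega \qquad (x \in \Z\times\Z/2\Z).
\]
This is literally of the form \eqref{aeij} (with $M=1$, $J_1=2q$, all $E_{1,j}=\{0\}$, and $E'_1=\Omega$), so $P_{\mathrm{perm}}$ is expressible by Definition \ref{expressible-def}(ii), and the data $\Omega$ and the shifts $(k,s)$ are plainly generated in finite time. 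It then remains to prove the set-theoretic identity $Q_\iota = P_{\mathrm{bool}}\wedge P_{\mathrm{perm}}$, after which Lemma \ref{closure}(ii) finishes the proof and preserves constructivity.

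For the equivalence I would argue as follows. The disjoint union above forces the $2q$ singletons to be distinct and to exhaust $\Omega$; when $\alpha$ is $e$-boolean we have $\alpha(x+(k,1)) = -\alpha(x+(k,0))$, so writing $w_k \coloneqq \alpha((n_0,t_0)+(k,0))$ the equation at $x=(n_0,t_0)$ reads $\{w_0,\dots,w_{q-1}\}\uplus\{-w_0,\dots,-w_{q-1}\} = \Omega$ (here $w_k \neq -w_k$ since $w_k \in \{-1,+1\}^U$ and $L>2$). A short counting argument using $c(-w)=c(w)$ and the two-element content fibres shows this holds if and only if each $w_k \in \Omega$ and the contents $c(w_0),\dots,c(w_{q-1})$ are pairwise distinct, i.e.\ form a permutation of $\Z/q\Z$. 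Thus $P_{\mathrm{bool}}\wedge P_{\mathrm{perm}}$ asserts exactly that every length-$q$ window $(c(n),\dots,c(n+q-1))$ of contents is a permutation of $\Z/q\Z$; comparing two overlapping windows then forces $c(n+q)=c(n)$, so the content is $q$-periodic with one period a permutation. Setting $\sigma(\pi_q(n)) \coloneqq c(\alpha(n,0))$ (well defined and a permutation, and independent of $t$ since $c(-v)=c(v)$) recovers $\alpha(n,t)=\pm\iota(\sigma(\pi_q(n)))$, giving $Q_\iota$; the reverse inclusion is the same computation run backwards, as sketched above.

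The main obstacle — and essentially the only idea — is the observation in the third paragraph that the purely existential, \emph{global} clause ``there exists a permutation $\sigma$'' can be captured by a single \emph{local} disjoint-union equation: the $e$-boolean structure supplies the negated copies $-w_k$ for free, so that demanding the $2q$ values fill the symmetric set $\Omega$ exactly is equivalent to demanding a permutation of contents on each sliding window. This is also what lets us obtain genuine expressibility rather than mere weak expressibility, avoiding any appeal to Lemma \ref{boolean-express}. The accompanying combinatorial equivalence (sliding windows are all permutations $\Leftrightarrow$ periodic content with one period a permutation) and the content-counting step are routine but are where care is needed.
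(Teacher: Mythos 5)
Your proposal is correct and follows essentially the same route as the paper: the single functional equation $P_{\mathrm{perm}}$ you write down is precisely the paper's equation \eqref{funceq} (same shifts $(k,0),(k,1)$, same target set $\iota(\Z/q\Z)\uplus -\iota(\Z/q\Z)$), conjoined with the $(0,1)$-boolean property via Lemma \ref{bool-function} and Lemma \ref{closure}(i),(ii). The only difference is cosmetic: your ``content'' function $c$ and sliding-window phrasing make explicit the same counting and $q$-periodicity argument that the paper carries out by comparing the equation at $n$ and $n+1$.
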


\begin{proof}  First suppose that $(\alpha_1,\dots,\alpha_U)$ obeys  $Q_\iota$. Then from the $(0,1)$-boolean property and \eqref{eq:pp} we see that for any $(n,t) \in \Z\times \Z/2\Z$,  the tuples
$$ (\alpha_1(n+i, t+j),\dots,\alpha_U(n+i, t+j)),\quad i=0,\dots,q-1;\; j=0,1$$
 are in the set $\iota(\Z/q\Z) \uplus -\iota(\Z/q\Z)$, with each element of this set being attained exactly once. Moreover, since there are exactly $2q$ such tuples they must cover the set $\iota(\Z/q\Z) \uplus -\iota(\Z/q\Z)$, thus we have the functional equation
\begin{equation}\label{funceq}
 \biguplus_{i=0}^{q-1} \biguplus_{j=0}^1 (\alpha_1(n+i, t+j),\dots,\alpha_U(n+i, t+j)) +\{0\} = 
\iota(\Z/q\Z) \uplus -\iota(\Z/q\Z)
\end{equation}
for all $(n,t) \in \Z \times \Z/2\Z$.

Conversely, suppose that $\alpha_1,\dots,\alpha_U$ are $(0,1)$-boolean and obey the equation \eqref{funceq} for all $(n,t) \in \Z \times \Z/2\Z$.  Replacing $n$ by $n+1$ in \eqref{funceq}, setting $t=0$, and comparing the two resulting equations, we see that
$$ \biguplus_{j=0}^1 (\alpha_1(n, j),\dots,\alpha_U(n, j)) = \biguplus_{j=0}^1 (\alpha_1(n+q, j),\dots,\alpha_U(n+q, j)).$$
By the $(0,1$)-boolean nature of the $\alpha_u$, this implies that the set 
$$\{ (\alpha_1(n, 0),\dots,\alpha_U(n, 0)), -(\alpha_1(n, 0),\dots,\alpha_U(n, 0)) \}$$ 
is periodic in $n$ with period $q$.  Also, by \eqref{funceq} we have that the union $$\biguplus_{i=0}^{q-1}  \{ (\alpha_1(n, 0),\dots,\alpha_U(n, 0)), -(\alpha_1(n, 0),\dots,\alpha_U(n, 0)) \}$$ covers the set $\iota(\Z/q\Z) \uplus -\iota(\Z/q\Z)$.  We conclude that there exists a permutation $\sigma \colon \Z/q\Z \to \Z/q\Z$ such that
$$\{ (\alpha_1(n, 0),\dots,\alpha_U(n, 0)), -(\alpha_1(n, 0),\dots,\alpha_U(n, 0)) \} = \{ \iota(\sigma(\pi_q(n))), - \iota(\sigma(\pi_q(n))) \}$$
for all $n$.  This implies that the property $Q_\iota$ holds.  Since the property of $\alpha_1,\dots,\alpha_U$ simultaneously being $(0,1)$-boolean was already known to be expressible by Lemma \ref{bool-function} and Lemma \ref{closure}(i),(ii) (the latter being needed to combine together the separate assertions that each individual $\alpha_i$ is $(0,1)$-boolean), the claim then follows by a further application of Lemma \ref{closure}(ii).
\end{proof}

\subsection{Programming a Sudoku puzzle}\label{program-sec}

Now we can prove Theorem \ref{sudoku-to-expressible}, using a variant of the construction at the end of \cite[Section 7]{GT22}.  Let $\cS = \cS_{N,\Sigma}$ be a Sudoku rule, and let $\cC = \cC_q$ be an initial condition.  We let $s_0$ be a natural number obeying the largeness condition
$$ \# \Sigma, q \leq 2^{s_0-1}.$$
Then we can find injections  $\iota_0 \colon \Z/q\Z \to \{-1,1\}^{s_0}$, $\iota_1 \colon \Sigma \to \{-1,1\}^{s_0}$ such that  $\iota_0(\Z/q\Z)$ and $-\iota_0(\Z/q\Z)$ are disjoint, and such that $\iota_1(\Sigma)$ and $-\iota_1(\Sigma)$ are disjoint.  The maps $\iota_0, \iota_1$ can be viewed as an encoding of the sets $\Sigma, \Z/q\Z$ as binary strings.

We define $\Omega \subset \{-1,+1\}^{2s_0 N}$ to be the set of all tuples $$(\omega_{a,b,n})_{a = 0,1; b=1,\dots,s_0; n=1,\dots,N}$$ with $\omega_{a,b,n} \in \{-1,+1\}^{s_0}$, such that there exist a function $$g \colon \{0,\dots,N-1\} \to \Sigma$$ in $\cS$ and elements $c_1,\dots,c_N$ of $\Z/q\Z$ such that
$$ (\omega_{1,1,n},\dots,\omega_{1,s_0,n}) = \iota_1(g(n))$$
and
$$ (\omega_{0,1,n},\dots,\omega_{0,s_0,n}) = \iota_0(c_n)$$
and
$$ (c_n, g(n)) \in \cC$$
for all $n=1,\dots,N$.  Informally, $\Omega$ encodes the Sudoku rule $\cS$ and the initial condition $\cC$ in a binary form.  From the construction we see that $\Omega$ and $-\Omega$ are disjoint.

Let $\Z/L\Z$ be a cyclic group with $L > 4s_0 N + 4$.  Define the $$(\Z^2 \times \Z/2\Z, (\Z/L\Z)_{u=1,\dots,2s_0 N})\text{-property } S$$ by requiring a tuple $$(\alpha_{a,b,n})_{a=0,1; b=1,\dots,s_0; n=1,\dots,N}$$ of functions $\alpha_{a,b,n} \colon \Z^2 \times \Z/2\Z \to \Z/L\Z$ to obey the following properties:
\begin{itemize}
    \item[(1)]  For each $a=0,1$, $b=1,\dots,s_0$, and $n=1,\dots,N$, the function $\alpha_{a,b,n}$ is a $((0,0),1)$-boolean function.
    \item[(2)]  For each $a=0,1$, $b=1,\dots,s_0$, and $n=1,\dots,N$, the function $\alpha_{a,b,n}$ is $((-n,1),0)$-periodic.
    \item[(3)]  For each $((i,j),t) \in \Z^2 \times \Z/2\Z$, the tuple $$(\alpha_{a,b,n}((i,j),t))_{a=0,1; b=1,\dots,s_0; n=1,\dots,N})$$ lies in $\Omega \uplus - \Omega$.
    \item[(4)]  For each $n=1,\dots,N$, there exists $\sigma_n \colon \Z/q\Z \to \Z/q\Z$ such that
$$ (\alpha_{0,1,n}((i,j),t),\dots,\alpha_{0,s_0,n}((i,j),t)) = \pm \iota_0( \sigma_n( \pi_q(jn+i) ) )$$
for all $((i,j),t)  \in \Z^2 \times \Z/2\Z$.
\end{itemize}

From Lemma \ref{bool-function} and Lemma \ref{closure}(i),(ii) (the latter being needed to combine the assertions for each individual $\alpha_{a,b,n}$ together) we see that the property (1a) is expressible.  From Lemma \ref{express-period} and Lemma \ref{closure}(i),(ii) we similarly see that the property (2) is expressible.  From Lemma \ref{boolean-express} and Lemma \ref{closure}(i),(ii) we see that the property (3) is weakly expressible.  From Lemma \ref{permutation-express} and Lemma \ref{closure}(i),(ii) we see that the the conjunction of property (1) and property (4) is expressible.  By one final application of Lemma \ref{closure}(ii) we conclude that the property $S$ is also weakly expressible. Also it is clear that $S$ can be constructed in finite time from $\cS$ and $\cC$, and that the weak expressibility of $S$ is similarly constructive.

To complete the proof of Theorem \ref{sudoku-to-expressible}, it suffices to show that the property $S$ is satisfiable if and only if the $(\cS,\cC)$-Sudoku puzzle is solvable.

First suppose that the $(\cS,\cC)$-Sudoku puzzle is solvable, thus there exist a $\cS$-Sudoku solution $F \colon \mathbb{B} \to \Sigma$ and permutations $\sigma_n \colon \Z/q\Z \to \Z/q\Z$ such that
\begin{equation}\label{snf}
 ( \sigma_n(\pi_q(m)), F(n,m) ) \in \cC
\end{equation} 
for all $(n,m) \in \mathbb{B}$.  We then define the functions $\alpha_{a,b,n} \colon \Z^2 \times \Z/2\Z \to \Z/L\Z$
for $a=0,1$, $b=1,\dots,s_0$, and $n=1,\dots,N$, by the formulae
$$ (\alpha_{1,1,n}(i,j),t),\dots,\alpha_{1,s_0,n}(i,j),t)) \coloneqq (-1)^t \iota_1(F(n, jn+i))$$
and
$$ (\alpha_{0,1,n}((i,j),t),\dots,\alpha_{0,s_0,n}((i,j),t)) \coloneqq (-1)^t \iota_0(\sigma_n( \pi_q(jn+i))$$
for all $((i,j),t) \in \Z^2 \times \Z/2\Z$.  By construction we see that the functions $\alpha_{a,b,n}$ obey all of the above properties (a)-(d), and hence obey the property $S$.  Thus $S$ is satisfiable.

Conversely, suppose that $\alpha_{a,b,n} \colon \Z^2 \times \Z/2\Z \to \Z/L\Z$ for $a=0,1$, $b=0,\dots,s_0$, and $n=1,\dots,N$ are a collection of functions obeying property $S$.  From properties (1), (2), (3) and the construction of $\Omega$, we see that there exist functions $F \colon \mathbb{B} \to \cW$ and $c \colon \mathbb{B} \to \Z/q\Z$ such that
$$ (\alpha_{1,1,n}((i,j),t),\dots,\alpha_{1,s_0,n}((i,j),t)) = \pm \iota_1(F(n, jn+i))$$
and
$$ (\alpha_{0,1,n}((i,j),t),\dots,\alpha_{0,s_0,n}((i,j),t)) = \pm \iota_0(c(n, jn+i))$$
for all $((i,j),t) \in \Z^2 \times \Z/2\Z$ and $n=1,\dots,N$, such that $n \mapsto F(n,jn+i)$ lies in $\cS$
and
$$ (c(n,jn+i), F(n,jn+i)) \in \cC$$
for all $n  \in \{0,\dots,N-1\}$ and $i,j \in \Z$.  Thus $F$ is a $\cS$-Sudoku solution.   From property (4), the injectivity of $\iota_0$, and the disjointness of $\iota_0(\Z/q\Z)$ and $-\iota_0(\Z/q\Z)$, we have
$$ c(n,jn+i) =  \sigma_n( \pi_q(jn+i) )$$
for all $n  \in \{0,\dots,N-1\}$ and $i,j \in \Z$, where $\sigma_n$ are the permutations in property (4), thus \eqref{snf} holds for all $(n,m) \in \mathbb{B}$. We conclude that $F$ obeys the initial condition $\cC$, and so the $(\cS, \cC)$-Sudoku puzzle is solvable.  This concludes the proof of Theorem \ref{sudoku-to-expressible}.

\section{Further discussion and open problems}

\subsection{Possible improvements}
We suggest several possible improvements of our construction.

\begin{itemize}
    \item It is of interest to remove the role of the periodic set $E$ in the statement of Theorem \ref{thm:main}, namely lifting the periodic subset $E$  to be the whole group $E=G_0$. See Remark \ref{rem:persub} for further discussion.
    \item  In our undecidability statement of Theorem \ref{thm:main}  the group $G_0$  is not fixed but rather given as part of the input.  It is of interest  to know if there is a fixed finite Abelian group $G_0$ such that translational monotilings in $\Z^2\times G_0$ are algorithmically undecidable. 
    \begin{question}\label{Q:fixedG0}
        Does there exist a finite Abelian group $G_0$ such that translational monotilings in $\Z^2\times G_0$ are algorithmically undecidable?
    \end{question}
    In \cite{GT21} we showed the undecidability of monotilings in spaces of the form $\Z^2\times G_0$ where $G_0$ is a finite non-Abelian group. The undecidability of monotilings in a fixed group of this form is still open. Since our tiling language is significantly more expressible when $G_0$ is non-Abelian, we expect that solving this question would be significantly easier than Question \ref{Q:fixedG0}.
\end{itemize}

\subsection{Decidability of periodic translational tilings}    
In \cite{GK72}, using Berger's construction \cite{Ber, Ber-thesis}, it was shown that periodic translational tilings with multiple tiles are undecidable in $\Z^2$: there is no algorithm that, when given a tile-set in $\Z^2$, computes in finite time whether it admits a strongly periodic tiling of $\Z^2$. The decidability of periodic translational monotilings in $\Z^2$ follows from \cite{BH, GT20}, where it was shown that a finite set $F$ in $\Z^2$ is a translational monotile if and only if it admits a strongly periodic tiling.

Although in this paper we establish the undecidability of translational monotilings in virtually $\Z^2$ spaces, our argument leaves open the decidability of periodic translational monotilings in virtually $\Z^2$ spaces, since all the translational monotilings that arise from our encoding of $(\cS,\cC)$-Sudoku puzzles are aperiodic. 

\begin{question}
    Does there exist an algorithm that, when given a translational monotile in a virtually $\Z^2$ space, computes in finite time whether it admits a strongly periodic tiling? 
\end{question}

\subsection{Decidability of non-translational tilings in the plane}
The periodic tiling conjecture is known to hold for topological disks \cite{bn, ken}.\footnote{We hope to extend this result beyond topological disks in a future work (in preparation).} 
This implies a decidability result for translational monotilings by topological disks. However, if allowing a larger group of motions to act on the monotiles, there are several recent constructions of planar aperiodic monotilings \cite{socolar-taylor, hat, spectre}. The constructions in \cite{hat, spectre} consist of monotiles whose shape is very simple; in particular, all of them are topological disks. Nevertheless, the decidability of such monotilings is still unsolved. See \cite[Section 7]{hat} for further discussion.

\subsection{Translational tilings by multiple tiles in $\Z^2$}
Theorem \ref{thm:main} gives the undecidability of translational monotilings in virtually $\Z^2$ spaces. In $\Z^2$, translational monotilings are known to be decidable \cite{BH, GT20}. In \cite{ollinger11} it was shown that  translational tilings with $11$ tiles in $\Z^2$ are undecidable. It is still an open problem to determine the minimal $2\leq J\leq 11$ such that translational tilings with $J$ tiles in $\Z^2$ are undecidable.

\end{document}